\definecolor{br}{RGB}{230,115,0}
\numberwithin{equation}{section}
\newtheorem{theorem}{Theorem}[section]
\newtheorem{prop}[theorem]{Proposition}
\newtheorem{corollary}[theorem]{Corollary}
\newtheorem{lemma}[theorem]{Lemma}
\newtheorem{defn}[theorem]{Definition}
\newtheorem*{remark}{Remark}
\renewenvironment{proof}{\medskip\noindent{\bf Proof.}}{\medskip}
\newenvironment{myproof}[1][\proofname]{
  \medskip \noindent {\bf #1}
}{\medskip}
\numberwithin{equation}{section}
\newcommand{\RR}{{\mathbb{R}}}
\newcommand{\dist}{\operatorname{dist}}
\newcommand{\R}{\mathcal{R}}
\renewcommand{\emptyset}{\mbox{\textup{\O}}}
\DeclareMathOperator{\supp}{supp}
\DeclareMathOperator{\diam}{diam}
\DeclareMathOperator{\interior}{int}
\DeclareMathOperator*{\Lip}{Lip}
\def\XXint#1#2#3{{\setbox0=\hbox{$#1{#2#3}{\int}$}
     \vcenter{\hbox{$#2#3$}}\kern-.5\wd0}}
\DeclareMathOperator{\divg}{div}
\DeclareMathOperator{\spt}{spt}
\DeclareMathOperator{\capacity}{cap_2}
\DeclareMathOperator{\capacityo}{cap_1}
\DeclareMathOperator{\capacityp}{cap_p}
\renewcommand{\R}{\mathbb{R}}
\newcommand{\NN}{\mathbb{N}}
\newcommand{\pO}{\partial\Omega}
\newcommand{\Oj}{\Omega_j}
\newcommand{\Oinf}{\Omega_\infty}
\newcommand{\pOj}{\partial\Omega_j}
\newcommand{\pOinf}{\partial\Omega_\infty}
\newcommand{\sj}{\sigma_j}
\newcommand{\sinf}{\mu_\infty}
\newcommand{\oj}{\omega_j}
\newcommand{\oinf}{\omega_\infty}
\newcommand{\mj}{\mu_j}
\newcommand{\minf}{\mu_\infty}
\newcommand{\Sinf}{\Sigma_\infty}
\newcommand{\ojA}{\omega_j^{A(p,r)}}
\newcommand{\wRj}{\widetilde m_j}
\newcommand\res{\hbox{ {\vrule height.22cm}{\leaders\hrule\hskip.2cm} } }
\begin{document}
\allowdisplaybreaks

\makeatletter{\renewcommand*{\@makefnmark}{}
\footnotetext{\textit{2010 Mathematics Subject Classification.} 35J25, 42B37, 31B35.} 
\footnotetext{\textit{Key words and phrases.} Elliptic measure, uniform domain, $A_{\infty}$ class, exterior corkscrew, rectifiability.}\makeatother}

\title{Boundary rectifiability and elliptic operators with $W^{1,1}$ coefficients}

\author{Tatiana Toro\footnote{The first author was partially supported
by the Robert R. \& Elaine F. Phelps Professorship in Mathematics, the Craig McKibben \& Sarah Merner Professor in Mathematics and DMS grants 1361823 and 1664867.}  ~\&  Zihui Zhao\footnote{The second author was partially supported by DMS 
grants 1361823, 1500098 and 1664867. \newline  This material is based upon work supported by the National Science Foundation under DMS grant 1440140 while the authors were in residence at the Mathematical Sciences Research Institute in Berkeley, California, during the Spring 2017 semester.}
}
\date{}






\maketitle

\begin{abstract}
We consider second order divergence form elliptic operators with $W^{1,1}$ coefficients,
in a uniform domain $\Omega$ with Ahlfors regular boundary. We show that the $A_\infty$ property of the elliptic measure associated to any such operator implies that $\Omega$ is a set of locally finite perimeter whose boundary, $\pO$, is rectifiable. 
As a corollary we show that for this type of operators, absolute continuity of the surface measure with respect to the elliptic measure is enough to guarantee rectifiability of the boundary.
In the case that the coefficients are continuous we obtain additional information about $\Omega$.
\end{abstract}

\section{Introduction}

Recently there has been considerable activity seeking necessary and sufficient conditions on the geometry of a domain to ensure that the elliptic measure of a second order divergence form 
operator is absolutely continuous with respect to the surface measure of the boundary of the domain. In the case of the Laplacian contributions by several authors have led to a detailed 
understanding of the situation (see \cite{AAM}, \cite{ABHM}, \cite{AHMMMTV}, \cite{AHMNT}, \cite{BJ}, \cite{D1}, \cite{DJe}, \cite{GMT}, \cite{HLMN}, \cite{HM1}, \cite{HM4}
\cite{HMM}, \cite{HMU},  \cite{KT1}, \cite{KT2}, \cite{KT3}). In the variable coefficient case the \emph{forward} direction, which corresponds to the boundary regularity of the solutions in either 
Lipschitz or chord-arc domains is well understood. Examples constructed on \cite{CFK} and \cite{MM} 
showed that there exist second order divergence form elliptic operators on $C^1$ domains whose elliptic measure is singular with respect to the surface measure of the boundary of the 
domain. Thus it became clear that additional conditions on the matrix $A$ were needed. Different approaches emerged: perturbation theory 
(see \cite{D2}, \cite{E},\cite{F}, \cite{FKP}, \cite{MPT1}, \cite{MPT2}, \cite{CHM}); conditions on the structure or the oscillation of the matrix $A$ (see \cite{KP}, \cite{KKPT1}, \cite{HKMP}, 
\cite{KKPT2}); properties of the solutions with either bounded or $BMO$ boundary data (see \cite{DKP}, \cite{KKPT2}, \cite{Zh}).
In the variable coefficient case the \emph{free boundary regularity question}, which investigates the geometry of the domain under the assumption that the elliptic measure is well behaved with respect to 
the surface measure, is not well understood. In \cite{HMT1}, Hofmann, Martell and Toro proved that if $\Omega$ is a uniform domain with Ahlfors regular boundary, the matrix $A$ is locally Lipschitz and its 
gradient satisfies a Carleson condition then the $A_\infty$ property of the elliptic measure associated to any such operator implies that $\partial\Omega$ is uniformly rectifiable, that is 
$\Omega$ is an NTA domain. In \cite{ABHM}, Akman et al. obtained a qualitative version of these results. See also \cite{AGMT} for related work.

One of the main motivations of this paper is to understand whether the elliptic measure of divergence form operator distinguishes between a 
rectifiable and a purely unrectifiable boundary. Geometrically we consider bounded uniform domains $\Omega\subset \mathbb{R}^n$ with $n\ge 3$ (see Definition \ref{def:uniform}) with Ahlfors regular boundary (see Definition 
\ref{def:ADR}).
Analytically we consider 
second order divergence form elliptic and symmetric operators with $W^{1,1}(\Omega)\cap L^{\infty}(\Omega)$ or $C(\overline\Omega)$ coefficients whose elliptic measure is an $A_\infty$ 
weight in the sense of \cite{HMU} (see Definition \ref{def:AinftyHMU}) with respect to the surface measure $\sigma=\mathcal{H}^{n-1}\res\pO$. Our goal is to understand how the analytic 
information yields geometric insight on the domain and its boundary.
More precisely, let $\Omega\subset\mathbb{R}^n$ open and bounded, with $n\geq 3$. We consider uniformly elliptic divergence form operators $L=-\divg(A(X)\nabla)$, where $A(X) = \left( a_{ij}(X) \right)_{i,j=1}^n$ is a real symmetric matrix satisfying either $A\in W^{1,1}(\Omega) \cap L^{\infty}(\Omega)$ or $A\in C(\overline\Omega)$. We assume that there exist constants $0<\lambda<\Lambda<\infty$ such that $\|A\|_{L^\infty(\Omega)}\le \Lambda$ and for all $X\in\Omega$,
\begin{equation}\label{def:UE}
	\lambda |\xi|^2 \leq \langle A(X)\xi, \xi \rangle \leq \Lambda |\xi|^2, \qquad\text{for all } \xi\in\mathbb{R}^n\setminus\{0\}.
\end{equation}
Our main goal is to understand the extent to which the regularity of the elliptic measures of these operators determines the structure of the boundary. In particular we care about whether the absolute continuity (quantitative or qualitative) of surface measure with respect to elliptic measure ensures the exterior corkscrew property of the domain or the rectifiability of its boundary. Theorems \ref{thm:lfp}, \ref{thm:qc} and \ref{thm:lnta} provide answers to these queries.

\begin{theorem}\label{thm:lfp}
Let $\Omega\subset \mathbb{R}^n$ be a bounded uniform domain with Ahlfors regular boundary. Let $L = -\divg(A(X)\nabla)$ with $A\in W^{1,1}(\Omega)\cap L^{\infty}(\Omega)$ satisfying \eqref{def:UE}. Suppose that the elliptic measure $\omega_{L} \in A_{\infty}(\sigma)$ in the sense of \cite{HMU} (see Definition \ref{def:AinftyHMU}), then $\Omega$ is a set of locally finite perimeter, whose measure theoretic boundary coincides with its topological boundary $\mathcal{H}^{n-1}$-a.e. Thus $\pO$ is $(n-1)$-rectifiable.  
\end{theorem}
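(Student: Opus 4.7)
The plan is to prove that $\om$ has locally finite perimeter via a duality argument built around the Green's function for $L$, then verify that the perimeter measure is comparable to the surface measure $\sigma$ so that the measure theoretic boundary coincides with $\pO$ up to $\mathcal{H}^{n-1}$-null sets, and finally invoke De Giorgi's structure theorem to conclude rectifiability.

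First, I would reduce the claim that $\om$ has locally finite perimeter to the quantitative bound
$$\left|\int_\om\divg\Phi\,dX\right|\leq C_B\,\|\Phi\|_\infty\qquad\text{for every }\Phi\in C_c^1(B,\rn),$$
valid for every ball $B\subset\rn$, which is equivalent to $\nabla 1_\om$ being a locally finite $\RR^n$-valued Radon measure. To extract such a bound from the PDE assumptions, I would exploit the Green's function $G(X_0,\cdot)$ of $L$ at an interior pole $X_0\in\om$ together with the Poisson-Green representation
$$\phi(X_0)=\int_{\pO}\phi\,d\hm^{X_0}-\int_\om L\phi(Y)\,G(X_0,Y)\,dY,\qquad\phi\in C_c^\infty(\rn).$$
Because $A\in W^{1,1}(\om)\cap L^\infty(\om)$, the pointwise expansion $L\phi=-A:D^2\phi-(\divg A)\cdot\nabla\phi$ is a well-defined $L^1(\om)$ function, so the identity serves as a vehicle to shift derivatives off $\phi$ onto the Green's function.

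The heart of the argument is the next step: given $\Phi\in C_c^1(\rn,\rn)$, I would construct an auxiliary $L$-solution $v$ on $\om$ whose gradient recovers $\Phi$ in an averaged sense near $\pO$, then plug combinations of $v$ and $\Phi$ into the identity above. Boundary contributions are controlled by $\hm_L$, which by the $A_\infty$ hypothesis is comparable at every scale to $\sigma$ with reverse-H\"older constants, while the interior piece involving $(\divg A)\cdot\nabla v$ is absorbed into boundary quantities via the Caffarelli-Fabes-Mortola-Salsa comparison between $G$ and $\hm_L$, together with the Ahlfors regularity of $\pO$ and the $L^1$ integrability of $\nabla A$. In fact, the same argument produces a two-sided comparison $|D 1_\om|\approx\sigma$ on $\pO$.

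Finally, locally finite perimeter together with $|D 1_\om|\approx\sigma$ pins down the measure theoretic boundary: the lower $n$-density of $\om$ at every $q\in\pO$ is bounded from below by the interior corkscrew condition (from the uniformity of $\om$), while $|D 1_\om|\approx\sigma$ combined with the Ahlfors lower bound on $\sigma(B(q,r)\cap\pO)$ forces the upper $n$-density of $\om$ to remain strictly below $1$ at $\sigma$-a.e.\ $q\in\pO$. This places $\sigma$-a.e.\ point of $\pO$ in the measure theoretic boundary, and De Giorgi's theorem then gives $(n-1)$-rectifiability of $\pO$. The main technical obstacle is the second step: $W^{1,1}$ is the weakest integrability on $\nabla A$ that permits pointwise integration by parts, leaving no slack, and absorbing the cross-term involving $\divg A$ into boundary quantities is possible only through the sharp Green's function / elliptic measure comparisons granted by the $A_\infty$ hypothesis on $\hm_L$.
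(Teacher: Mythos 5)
Your proposal takes a genuinely different route from the paper, but the central step is too vague to be convincing, and I believe there is a real gap rather than just a missing detail.

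The paper proceeds by blow-up. Because $A\in W^{1,1}(\Omega)$, Lemma \ref{A-vanishing-osc} shows that for $\mathcal{H}^{n-1}$-a.e.\ $q\in\pO$ there is a constant symmetric elliptic matrix $A^*(q)$ which is a Lebesgue value of $A$ (in a capacity sense); after rescaling by $r_j\to 0$ at such a $q$, the operator, Green's function, elliptic measure and surface measure all converge (Theorems \ref{thm:pseudo-blow-geo}, \ref{thm:AW11}), and the limit operator $L_\infty=-\divg(A^*(q)\nabla)$ is constant coefficient. Theorem \ref{thm:blow-ana-pole} then transfers the $A_\infty$ hypothesis to the blow-up, and the constant-coefficient theorem of \cite{HMU} (stated as Theorem \ref{thm:hmu}) gives uniform rectifiability of $\pOinf$, hence by \cite{AHMNT} the exterior corkscrew for $\Oinf$. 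Pulling that corkscrew back to $\Omega$ at scale $r_j$ gives a positive upper complement density at $\mathcal{H}^{n-1}$-a.e.\ $q\in\pO$, so $\mathcal{H}^{n-1}(\pO\setminus\partial_*\Omega)=0$; locally finite perimeter then follows from the Evans--Gariepy criterion (Thm.\ 1, Sec.\ 5.11 of \cite{Evans}), \emph{not} from a direct divergence bound, and De Giorgi's structure theorem gives rectifiability.

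Your middle step is where the gap lies. You propose to bound $\bigl|\int_\Omega\divg\Phi\bigr|$ directly by constructing an $L$-solution $v$ whose gradient ``recovers $\Phi$ in an averaged sense near $\pO$'' and then ``absorbing'' the cross-term $(\divg A)\cdot\nabla v$ into boundary quantities via CFMS and $A_\infty$. Neither construction nor absorption is specified precisely enough to see why it should work, and there are serious obstructions. First, $\nabla A\in L^1(\Omega)$ is a global integrability assumption with no pointwise or Carleson-type control near $\pO$, while $\nabla v$ (for a bounded $L$-solution on a uniform domain with merely Ahlfors regular boundary) is only in $L^2_{loc}$ and can blow up near $\pO$; the product $(\divg A)\cdot\nabla v$ is therefore not integrable in general. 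Second, the $A_\infty$ hypothesis is a scale-invariant statement relating $\omega$ and $\sigma$; there is no direct mechanism by which it controls an interior volume integral of $\nabla A$. The blow-up strategy circumvents both obstacles at once: the $W^{1,1}$ hypothesis is used only to produce a Lebesgue value $A^*(q)$ (Lemma \ref{A-vanishing-osc}), whence the rescaled operators converge to a constant-coefficient one in a way that kills the $\divg A$ term entirely in the limit (see \eqref{eqn:3.10}--\eqref{eqn:3.11}), and the $A_\infty$ hypothesis is passed to the tangent domain where the constant-coefficient machinery of \cite{HMU} applies. Without some version of this localization and linearization, I do not see how the $W^{1,1}$ and $A_\infty$ hypotheses can be combined to give the perimeter bound you want. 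Finally, note that a two-sided estimate $|D\mathbf 1_\Omega|\approx\sigma$ is in fact equivalent (given Ahlfors regularity) to what you need to prove, so asserting it as an intermediate output of the integration-by-parts step is not a simplification; proving it is precisely the content of the theorem.
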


\begin{theorem}\label{thm:qc}
Let $\Omega\subset \mathbb{R}^n$ be a bounded uniform domain with Ahlfors regular boundary. Let $L = -\divg(A(X)\nabla)$ with $A\in W^{1,1}(\Omega)\cap L^{\infty}(\Omega)$ satisfying \eqref{def:UE}. Suppose $X_0\in\Omega$ is such that $\delta(X_0)\sim \diam\Omega$, and denote $\omega = \omega_L^{X_0}$. Then
if $\sigma \ll \omega$, $\pO$ is $(n-1)$-rectifiable.
\end{theorem}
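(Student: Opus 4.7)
The plan is to reduce Theorem \ref{thm:qc} to Theorem \ref{thm:lfp} (or more precisely, to a localized form of its proof) via a Radon--Nikodym decomposition. Since $\sigma$ and $\omega$ are finite Borel measures on the compact set $\partial\Omega$ and $\sigma\ll\omega$, write $d\sigma=k\,d\omega$ for some nonnegative Borel function $k$ that is finite $\omega$-almost everywhere; the set $\{k=0\}$ is $\sigma$-null by the very definition of the Radon--Nikodym derivative. Setting
$$E_N:=\bigl\{x\in\partial\Omega:N^{-1}\le k(x)\le N\bigr\},\qquad N\in\NN,$$
gives an increasing family of Borel sets with $\sigma\bigl(\partial\Omega\setminus\bigcup_N E_N\bigr)=0$. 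Because a countable union of $(n-1)$-rectifiable sets is $(n-1)$-rectifiable, it suffices to show that each $E_N$ is $(n-1)$-rectifiable.

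On each $E_N$ the measures $\sigma$ and $\omega$ are mutually absolutely continuous with densities controlled by $N$, in the sense that $N^{-1}\omega(F)\le\sigma(F)\le N\,\omega(F)$ for every Borel $F\subset E_N$. Since $\omega$ is doubling on $\partial\Omega$ (a standard consequence of $\Omega$ being uniform with ADR boundary and $L$ being uniformly elliptic), the Lebesgue differentiation theorem applies to both measures, and at $\sigma$-almost every $x\in E_N$,
$$\lim_{r\to0}\frac{\sigma(B(x,r)\cap E_N)}{\sigma(B(x,r)\cap\partial\Omega)}=1\quad\text{and}\quad\lim_{r\to0}\frac{\omega(B(x,r)\cap E_N)}{\omega(B(x,r)\cap\partial\Omega)}=1.$$
Combining these with the pointwise density bound on $E_N$ yields $\omega\bigl(B(x,r)\cap\partial\Omega\bigr)\sim\sigma\bigl(B(x,r)\cap\partial\Omega\bigr)$ (with constants depending on $N$) at all sufficiently small $r$; an Egorov-type argument then upgrades this to a uniform comparability in $r$ on a compact subset $K\subset E_N$ with $\sigma(E_N\setminus K)$ arbitrarily small. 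Such a ``big piece'' carries exactly the quantitative comparability of $\omega$ and $\sigma$ needed to feed into the localized form of the arguments underlying Theorem \ref{thm:lfp}.

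The main obstacle is the final step: running the proof of Theorem \ref{thm:lfp} on such a big piece $K$. The proof of Theorem \ref{thm:lfp} uses the global $A_\infty$ hypothesis, together with the $W^{1,1}(\Omega)\cap L^\infty(\Omega)$ regularity of $A$, to produce a Green-type integration by parts identity relating $\sigma$, $\omega$, and $\nabla G$ on all of $\partial\Omega$, and deduces both locally finite perimeter and rectifiability. In our setting we need only the rectifiability conclusion and only on $K\subset E_N$, so the global identity should be replaced by one localized to a neighborhood of $K$; the delicate points are controlling the boundary contributions along $\partial K$ inside $\partial\Omega$ and exploiting the $W^{1,1}$ regularity of $A$ only in the interior region adjacent to $K$. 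Once this localized argument is carried out, letting $\sigma(E_N\setminus K)\to0$ shows $E_N$ is $(n-1)$-rectifiable, and summing over $N$ establishes the $(n-1)$-rectifiability of $\partial\Omega$, which is all that Theorem \ref{thm:qc} requires.
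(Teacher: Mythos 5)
Your high-level strategy --- decompose $\pO$ into pieces on which the Radon--Nikodym density $k=d\sigma/d\omega$ is bounded above and below, discard a $\sigma$-null set, and on each piece invoke a local version of Theorem \ref{thm:lfp} --- is essentially the same as the paper's. The paper implements the decomposition via a dyadic stopping-time argument on cubes (bad cubes where $\omega(B_l)/\sigma(B_l)\notin[1/N,N]$) rather than your pointwise sets $E_N=\{N^{-1}\le k\le N\}$, but these are different bookkeeping devices for the same idea, and your reduction to $\sigma(R_0)=0$ via the Lebesgue differentiation theorem mirrors the paper's.

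However, there are two genuine gaps at exactly the point where you wave your hands. First, the condition $N^{-1}\omega(F)\le\sigma(F)\le N\omega(F)$ for $F\subset E_N$, or even the derived comparability $\omega(B(x,r))\sim_N\sigma(B(x,r))$ for small $r$, involves the elliptic measure $\omega=\omega^{X_0}$ with the \emph{fixed} pole $X_0$. The local version of Theorem \ref{thm:lfp} that the paper proves (Theorem \ref{thm:lfplocal}) requires a local $A_\infty$ condition of the form $\omega^{A_\Delta}(E)/\omega^{A_\Delta}(\Delta')\le C_0(\sigma(E)/\sigma(\Delta'))^\theta$ with \emph{varying} poles $A_\Delta$ at the local non-tangential points, because the blow-up argument underlying Theorem \ref{thm:lfp} needs the $A_\infty$ property of the rescaled measures $\omega_j^{A(p,r)}$. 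Passing from the fixed-pole comparability to the varying-pole $A_\infty$ estimate is a substantive step, carried out in the paper via the change-of-pole formula (the boundary comparison principle, \eqref{eq:cop} and \eqref{eq:copBorel}), which you do not mention. Without it, your sets $E_N$ do not visibly satisfy the hypothesis any localized Theorem \ref{thm:lfp} would need.

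Second, you misdescribe the mechanism of the proof of Theorem \ref{thm:lfp} (it is not a single global Green integration-by-parts identity yielding locally finite perimeter; it is a blow-up argument producing constant-coefficient tangent operators, followed by the $A_\infty\Rightarrow$ UR $\Rightarrow$ NTA chain and exterior corkscrew density estimates), so your claim that ``the global identity should be replaced by one localized to a neighborhood of $K$'' is not a plan that connects to the actual proof. The paper's Theorem \ref{thm:lfplocal} localizes by observing that the blow-up at a point $q$ only ever sees arbitrarily small surface balls around $q$, so a local $A_\infty$ hypothesis on an \emph{open} set $G\ni q$ suffices. Your Egorov step produces a \emph{compact} set $K$, which is the wrong topology for that localization; the paper instead works with the open interior portions $Q_0^*(\epsilon)$ of good dyadic cubes. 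These are the missing ideas you would need to supply to make the proposal a proof.
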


Theorem \ref{thm:qc} should be understood as a corollary of Theorem \ref{thm:lfp}. In fact modulo a stopping time argument the proof can be reduced to applying a local version of Theorem \ref{thm:lfp}. 
By taking this approach we would like to emphasize the fact that, in this area, quantitative results yield qualitative ones. See section \ref{qualitative}.


\begin{theorem}\label{thm:lnta}
Let $\Omega\subset \mathbb{R}^n$ be a bounded uniform domain with Ahlfors regular boundary. Let $L = -\divg(A(X)\nabla)$ with $A\in C(\overline\Omega)$ satisfying \eqref{def:UE}. Suppose that the elliptic measure $\omega_{L} \in A_{\infty}(\sigma)$ in the sense of \cite{HMU} (see Definition \ref{def:AinftyHMU}), then there exists $r_\Omega>0$ such that $\Omega$ satisfies the exterior corkscrew condition for all $r<r_{\Omega}$. In particular $\Omega$ is an NTA domain.  
\end{theorem}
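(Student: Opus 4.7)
The plan is to leverage the continuity of the coefficients to reduce, via a blow-up analysis, the continuous-coefficient problem on $\Omega$ to a constant-coefficient problem (and then, after an affine change of variables, a harmonic problem) on a limit domain that inherits the uniform and ADR properties, and then invoke the existing harmonic-measure theory that produces exterior corkscrews under an $A_\infty$ assumption.

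I would argue by contradiction. Suppose the exterior corkscrew property fails at arbitrarily small scales; then there exist sequences $q_j \in \pO$ and $r_j \downarrow 0$ such that every point of $B(q_j,r_j)\setminus \overline\Omega$ has distance to $\pO$ smaller than $j^{-1}r_j$. Rescale by setting $\Omega_j := r_j^{-1}(\Omega - q_j)$ and $A_j(X) := A(q_j + r_j X)$, so that $L_j := -\divg(A_j\nabla)$ is uniformly elliptic with the same constants $\lambda,\Lambda$. Since uniform and ADR are scale-invariant with the same constants, $\{\Omega_j\}$ is a uniformly uniform family with uniformly ADR boundaries; after passing to a subsequence, $\Omega_j$ converges in the local Hausdorff sense to a domain $\Oinf$ which is still uniform with ADR boundary (standard compactness, as in \cite{KT3}). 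By uniform continuity of $A$ on $\overline\Omega$ and after further extraction so that $q_j \to q_\infty \in \pO$, one obtains $A_j \to A(q_\infty) =: A_\infty$ uniformly on compact subsets of $\rn$. Hence $L_j \to L_\infty := -\divg(A_\infty\nabla)$, a constant-coefficient operator.

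Next, I would transfer the $A_\infty$ hypothesis to the limit. With poles normalized at a fixed interior corkscrew point of $\Oinf$, standard compactness for elliptic measures (cf.\ \cite{KT2}) yields weak-$\ast$ convergence $\omega_j \rightharpoonup \oinf$ on the boundaries, where $\oinf$ is the elliptic measure of $L_\infty$ in $\Oinf$; combined with the convergence of the ADR surface measures $\sigma_j \rightharpoonup \sigma_\infty$, the scale-invariant quantitative $A_\infty$ bound passes to the limit to give $\oinf \in A_\infty(\sigma_\infty)$. Because $A_\infty$ is a symmetric positive-definite constant matrix, the affine map $Y = A_\infty^{-1/2}X$ conjugates $L_\infty$ to a multiple of $-\Delta_Y$ on an affinely equivalent domain $\widetilde{\Oinf}$, which is still uniform with ADR boundary, and transports $\oinf$ to its harmonic measure, which remains in $A_\infty$. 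Applying \cite{HMU} then \cite{AHMNT} (uniform plus $A_\infty$ harmonic measure implies UR, and uniform plus UR implies NTA) shows that $\widetilde{\Oinf}$, and hence $\Oinf$, enjoys the exterior corkscrew condition at every scale.

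It remains to propagate this back to $\Omega$. The exterior corkscrew property is stable under Hausdorff convergence: an exterior corkscrew point $X_\ast \in B(0,1)\setminus\overline{\Oinf}$ with $\dist(X_\ast,\pOinf) \ge c > 0$ is also, for $j$ large, an exterior corkscrew point of $\Omega_j$ at unit scale, contradicting the assumed failure. Hence there exists $r_\Omega > 0$ such that every boundary point of $\Omega$ admits exterior corkscrews at every scale $r < r_\Omega$; combined with the uniform hypothesis this gives NTA. The main obstacle I anticipate is orchestrating the three simultaneous limits—geometric, operator, and elliptic-measure—so that the quantitative $A_\infty$ constants survive into the limit and the invocation of the harmonic results on $\widetilde{\Oinf}$ is legitimate; a secondary technical point is the careful placement of normalized poles in the blow-up so that the boundary pieces being compared in $\oinf$ and $\sigma_\infty$ are nontrivial.
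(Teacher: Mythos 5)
Your proposal tracks the paper's argument closely: blow up along scales where the exterior corkscrew condition fails, show the rescaled domains converge (together with the Green functions, elliptic measures, and surface measures) to a uniform/ADR tangent domain carrying a constant-coefficient operator whose elliptic measure inherits the $A_\infty$ estimate (this transfer is the main technical content, carried out in the paper's Theorems \ref{thm:pseudo-blow-geo}, \ref{thm:pseudo-blow-ana}, and \ref{thm:blow-ana-pole}), invoke the constant-coefficient HMU theorem and \cite{AHMNT} to conclude the tangent domain is NTA with controlled constants, and propagate the exterior corkscrew back to $\Omega$ by Hausdorff stability to reach the contradiction. The only cosmetic difference is your affine change of variables reducing the constant-coefficient operator to the Laplacian; the paper instead quotes a constant-coefficient extension of \cite{HMU} (its Theorem \ref{thm:hmu}) directly.
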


It is important to differentiate the result in Theorem \ref{thm:lnta} and those in \cite{HM1}, \cite{HMT1} and \cite{HMU}. The key difference is that although one shows that the domain is NTA, the constants are not ``uniform" in the sense that they do not depend only on the \emph{allowable} constants, namely the dimension $n$, the ellipticity constants of $A$, the Ahlfors constants, and the constants that determine the uniform character of the domain. Here $r_\Omega$ is obtained via compactness and there might depend a priori on the domain $\Omega$ itself.

After this paper was posted it was brought to our attention that Azzam and Mourgoglou \cite{AM} were also working in similar questions. Both sets of hypotheses are different and therefore so are the results. The techniques are drawn from harmonic analysis and geometric 
measure theory. These bodies of work complement each other well and contribute to a fast evolving field. 

We would like to thank the referee for the careful reading of the paper and very useful suggestions.


\section{Preliminaries}

In this section we provide a few definitions, properties of the Green function and elliptic measure, and some important properties of $W^{1,1}$ functions on uniform domains. From now on we simply use $\omega$ to denote the elliptic measure $\omega_L$, when there is no confusion as to what the underlying operator is.

\begin{defn}\label{def:ICC}
The domain $\Omega$ is said to satisfy the \textbf{(interior) corkscrew condition} (resp. exterior corkscrew condition) if  there are $M, R>0$ such that for any $q\in\pO$, $r\in (0,R)$, there exists a corkscrew point (or non-tangential point) $A=A(q,r) \in \Omega$ (resp. $A\in \Omega^c$, the complement of domain $\Omega$) such that
 \begin{equation}\label{eqn:nta-M}
 	|A-q|<r \quad \text{and}\quad \delta(A):= \dist(A,\partial\Omega) > \frac{r}{M}.
 \end{equation} 
\end{defn}

\begin{defn}\label{def:HCC}
The domain $\Omega$ is said to satisfy the \textbf{Harnack chain condition} if there are universal constants $C_1>1$ and $C_2>0$, such that for every pair of points $A$ and $A'$ in $\Omega$ satisfying
\[
	\Lambda:=\frac{|A-A'|}{\min\{\delta(A), \delta(A')\}} > 1, 
\] 
 there is a chain of open Harnack balls $B_1, B_2, \cdots, B_M$ in $\Omega$ that connects $A$ to $A'$. Namely, $A\in B_1$, $A'\in B_M$, $B_j\cap B_{j+1}\neq\emptyset$ and 
\begin{equation}\label{preHarnackball}
 	C_1^{-1}\diam(B_j) \leq \delta(B_j) \leq C_1\diam(B_j)
\end{equation}
     for all $j$.
    Moreover, the number of balls $M$ satisfies
\begin{equation}\label{eqn:length}
	M \leq  C_2(1+\log_2 \Lambda).
\end{equation}  
\end{defn}

\begin{defn}\label{def:uniform}
A domain $\Omega\subset \RR^n$ is said to be \textbf{uniform} if it satisfies: 
\newline
 (i) the interior corkscrew condition and \newline
(ii) the Harnack chain condition.
\end{defn}

\begin{defn}\label{def:nta}
	A uniform domain $\Omega\subset\RR^n$ is said to be \textbf{NTA} if it satisfies the exterior corkscrew condition.
\end{defn}

For any $q\in\pO$ and $r>0$, let $\Delta=\Delta(q,r)$ denote the surface ball $B(q,r) \cap \pO$.
We always assume $r< \diam \Omega$.

\begin{defn}\label{def:ADR}
We say that the boundary of $\Omega$ is \textbf{Ahlfors regular} if there exist constants $C_2>C_1>0$, such that for any $q\in\pO$ and any radius $r>0$,
\[ C_1 r^{n-1} \leq \sigma(\Delta(q, r)) \leq C_2 r^{n-1}, \]
where $\sigma = \mathcal{H}^{n-1}|_{\pO}$ is the surface measure.
\end{defn}

It was proved in \cite{Zh} (section 3) that a domain in $\mathbb{R}^n$ with $n\ge 3$ with Ahlfors regular boundary satisfies the so-called capacity density condition and is in particular Wiener regular.
\begin{defn}\label{def:CDC}
A domain $\Omega\subset \mathbb{R}^n$ with $n\ge 3$ is said to satisfy the \textbf{capacity density condition (CDC)} if there exist constants $c_0, R>0$ such that 
\begin{equation}\label{eqn:CDC}
	\capacity\left(\overline{B(q,r)} \cap \Omega^c \right) \geq c_0 r^{n-2},\quad \text{for any } q\in\pO \text{ and any } r\in (0,R).
\end{equation}	
For any compact set $K$, the $p$-capacity with $1\leq p<\infty$ is defined as follows:
	\begin{equation}\label{def:cap}
		\capacityp(K) = \inf \bigg\{\int |\nabla \varphi|^p dx: \varphi \in C_c^{\infty}(\mathbb{R}^n), K\subset \interior\{\varphi\geq 1\}\bigg\}.
	\end{equation}
\end{defn}

Moreover, if $\Omega$ is a uniform domain satisfying the CDC (in particular if $\pO$ is Ahlfors regular) and $L=-\divg(A\nabla)$ with $A\in L^{\infty}(\Omega)$ satisfying \eqref{def:UE}, the work of Gr\"{u}ter and Widman \cite{GW} guarantees the existence of a Green function. The work in \cite{HMT2} describes the behavior of the Green functions with respect to the elliptic measure. In particular the results proved in \cite{NTA} for harmonic functions on NTA domains extend to solutions of $L$ on uniform domains with the CDC. We summarize below the results which will be used later in this paper.

\begin{theorem}\label{thm:gw}
	Given an open bounded connected domain $\Omega\subset\RR^n$, there exists a unique function $G: \Omega \times \Omega \to \RR\cup \{\infty\}$, $G\geq 0$ such that the following hold:
	\begin{enumerate}
		\item for each $y\in\Omega$ and $r>0$, $G(\cdot, y) \in W^{1,2}(\Omega \setminus B(y,r))\cap W^{1,1}_0(\Omega)$;
		\item for all $\varphi \in C_c^{\infty}(\Omega)$, $\displaystyle\int \langle A\nabla G(x,y), \nabla \varphi(x) \rangle dx = \varphi(y)$;
		\item for each $y\in\Omega$, $G(\cdot,y)$ the Green function of $L$ in $\Omega$ with pole $y$, denoted by $G(x) = G(x,y)$, satisfies
	\begin{equation}\label{eqn:qweak}
		G\in L^{*}_{n/(n-2)}(\Omega) \text{ with } \|G\|_{L^{*}_{n/(n-2)}} \leq C(n,\lambda,\Lambda)
	\end{equation}
	\begin{equation}\label{eqn:qqweak}
		\nabla G \in L^*_{n/(n-1)}(\Omega) \text{ with } \|\nabla G\|_{L^{*}_{n/(n-1)}} \leq C(n,\lambda,\Lambda)
	\end{equation}
		\item For all $x,y\in\Omega$,
	\begin{equation}\label{eqn:gub}
		G(x,y) \leq C(n,\lambda,\Lambda) \frac{1}{|x-y|^{n-2}}
	\end{equation}
	\begin{equation}\label{eqn:glb}
		G(x,y) \geq C(n,\lambda,\Lambda) \frac{1}{|x-y|^{n-2}}, \quad\text{if } |x-y|\leq \frac{\delta(y)}{2}.
	\end{equation}
	\end{enumerate}
		
	Here $\delta(y)=\dist(y,\pO)$ and $L^*_p(\Omega)$ denotes the weak $L^p$ space.
\end{theorem}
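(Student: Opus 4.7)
The plan is to construct $G(\cdot,y)$ for fixed $y\in\Omega$ by approximating the Dirac mass at $y$. Let $\rho_\eps\in C_c^\infty(B(y,\eps))$ be a nonnegative mollifier with $\int\rho_\eps=1$, and apply Lax--Milgram to the bilinear form $(u,v)\mapsto\int_\Omega\langle A\nabla u,\nabla v\rangle$ on $W^{1,2}_0(\Omega)$ (coercive by \eqref{def:UE}) to produce $G_\eps\in W^{1,2}_0(\Omega)$ satisfying $LG_\eps=\rho_\eps(\cdot-y)$ weakly. Testing against $(G_\eps)_-$ forces $G_\eps\ge 0$.

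Next I would extract $\eps$-uniform weak-type bounds via Stampacchia-style truncations $\psi_{s,k}:=\min\{G_\eps,k\}-\min\{G_\eps,s\}\in W^{1,2}_0(\Omega)$ for $0<s<k$. Testing the equation against $\psi_{s,k}$ controls $\int|\nabla G_\eps|^2\mathbf{1}_{\{s<G_\eps<k\}}$, and coupling with Sobolev embedding applied to $\psi_{s,k}$ yields a self-improving recursion on $|\{G_\eps>t\}|$. Iteration via the classical Stampacchia lemma produces \eqref{eqn:qweak} and \eqref{eqn:qqweak} with constants depending only on $n,\lambda,\Lambda$; since $\Omega$ is bounded and $n/(n-1)>1$, \eqref{eqn:qqweak} yields a uniform $L^1$ bound on $\nabla G_\eps$, so the eventual limit lies in $W^{1,1}_0(\Omega)$. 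For the pointwise upper bound \eqref{eqn:gub}, note that for $|x-y|>2\eps$ the function $G_\eps$ is a nonnegative $L$-solution on $B:=B(x,|x-y|/4)$, so Moser's $L^\infty$ estimate dominates $G_\eps(x)$ by the average of $G_\eps$ on $2B$; a layer-cake computation based on \eqref{eqn:qweak} bounds this average by $|x-y|^{2-n}$. The lower bound \eqref{eqn:glb} follows from a comparison argument on $B(y,\delta(y)/2)\setminus B(y,2\eps)$ with the explicit $L$-subsolution barrier $c|x-y|^{2-n}$, using the weak formulation to estimate $G_\eps$ from below on $\partial B(y,\delta(y)/2)$ and the maximum principle.

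The uniform estimates give weak-$W^{1,2}_{\mathrm{loc}}(\Omega\setminus\{y\})$ compactness of $\{G_\eps\}$; extracting a subsequence produces $G(\cdot,y)$, property (2) is preserved because $\int\rho_\eps(\cdot-y)\varphi\to\varphi(y)$, and uniqueness follows from the maximum principle applied to the difference of two candidates (an $L$-harmonic function on $\Omega\setminus\{y\}$, bounded near $y$ after subtraction, with $W^{1,1}_0$ zero trace). The main obstacle I anticipate is the weak-$L^{n/(n-1)}$ gradient bound and the $W^{1,1}_0$ claim: since $G(\cdot,y)\notin W^{1,2}(\Omega)$, one cannot test with $G$ itself, and the two-level truncation $\psi_{s,k}$ together with the delicate pairing of Sobolev embedding and layer-cake decomposition is required to produce the sharp exponents. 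Establishing the zero trace for the limit (rather than merely $W^{1,1}_{\mathrm{loc}}$-regularity) further relies on lower semicontinuity of the $W^{1,1}_0$ norm under weak convergence combined with the uniform trace-zero property of each $G_\eps$.
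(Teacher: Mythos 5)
The paper does not prove Theorem~\ref{thm:gw}; it simply cites Gr\"uter--Widman~\cite{GW} (the subsequent boundary estimates are attributed to \cite{HMT2}). Your proposal is essentially a reconstruction of the Gr\"uter--Widman argument, which is in the tradition of Littman--Stampacchia--Weinberger \cite{LSW}: solve $LG_\eps=\rho_\eps(\cdot-y)$ by Lax--Milgram, extract $\eps$-uniform weak-$L^{n/(n-2)}$ and weak-$L^{n/(n-1)}$ bounds by Stampacchia truncations, pass to a limit, and invoke De~Giorgi--Nash--Moser for the pointwise estimates. Your truncation/Sobolev/layer-cake scheme does give the correct exponents, and combined with \eqref{eqn:lp*-lq} it yields the uniform $L^1$ gradient bound and hence $W^{1,1}_0$ membership of the limit. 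The upper-bound argument (local $L^\infty$ estimate plus a layer-cake computation based on \eqref{eqn:qweak}) is also correct.

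The genuine gap is in the lower bound \eqref{eqn:glb}. You propose to compare $G_\eps$ on the annulus $B(y,\delta(y)/2)\setminus B(y,2\eps)$ with ``the explicit $L$-subsolution barrier $c|x-y|^{2-n}$.'' For a general uniformly elliptic $A\in L^\infty(\Omega)$ the function $|x-y|^{2-n}$ is \emph{not} an $L$-(sub)solution: only the Laplacian, or operators with very special structure, has a fundamental solution of this explicit form, and for merely bounded measurable $A$ one cannot even make pointwise sense of $L(|x-y|^{2-n})$. The correct route, and the one in \cite{GW}, does not use an explicit barrier. Fix $r=|x-y|\le\delta(y)/2$ and test the defining identity $1=\int A\nabla G_\eps\cdot\nabla\varphi$ against a cut-off $\varphi$ equal to $1$ on $B(y,r/2)$ and supported in $B(y,r)$; Cauchy--Schwarz together with Caccioppoli on the intermediate annulus gives $1\lesssim r^{n-2}\sup_{|z-y|\sim r}G_\eps$, and Harnack's inequality on that annulus (which lies inside $\Omega$ precisely because $r\le\delta(y)/2$) upgrades the averaged bound to the pointwise lower bound $G_\eps(x)\gtrsim r^{2-n}$. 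If one insists on a barrier, it must be built by solving an auxiliary Dirichlet problem on the annulus and showing via Harnack that it decays like $r^{2-n}$, not by taking the power law itself. A secondary soft spot: your uniqueness argument treats the difference of two candidate Green functions as a bounded $L$-harmonic function to which the maximum principle applies, but a priori that difference is only $O(|x-y|^{2-n})$ near $y$ and lies only in $W^{1,1}_0(\Omega)$; upgrading it to a $W^{1,2}_0$ solution of $Lw=0$, so that Lax--Milgram uniqueness can be invoked, requires an explicit truncation or removable-singularity step that your sketch omits.
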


\begin{defn}\label{weaklp}
	For $p>1$ we define the Banach space $L^*_p(\Omega)$ by 
	\[ L_p^*(\Omega) = \{f:\Omega \to \RR\cup\{\infty\}: f \text{ measurable and } \|f\|_{L^*_p(\Omega)} <\infty \} \]
	where $		\|f\|_{L^*_p(\Omega)} = \sup_{t>0} t \left |\{x\in\Omega: |f(x)|>t \}\right|^{\frac{1}{p}}$.
\end{defn}
Note that $\|f\|_{L_p^*(\Omega)} \leq \|f\|_{L^p(\Omega)}$,
and for $0<\epsilon\leq p-1$
\begin{equation}\label{eqn:lp*-lq}
	\|f\|_{L^{p-\epsilon}(\Omega)} \leq \left( \frac{p}{\epsilon} \right)^{\frac{1}{p-\epsilon}} |\Omega|^{\frac{\epsilon}{p(p-\epsilon)}} \|f\|_{L_p^*(\Omega)},
\end{equation}
where $|\Omega|$ denotes the Lebesgue measure of $\Omega$. Therefore \eqref{eqn:qweak}, \eqref{eqn:qqweak} and Theorem \ref{thm:gw} (1) imply that for any $1\leq q<n/(n-1)$ the Green function $G=G(\cdot, y)$ satisfies
	\begin{equation}\label{eqn:glq}
		G\in W_0^{1,q}(\Omega), \text{ with } \|G\|_{W_0^{1,q}(\Omega)} \leq C(n,\lambda,\Lambda,q, |\Omega|).
	\end{equation}

\noindent Recall that if a domain $\Omega$ is Wiener regular, by \cite{LSW} $\Omega$ is regular for $L$ as well. In this case there exists a family of probability measures $\{\omega^X\}_{X\in\Omega}$ such that if $Lu=0$ in $\Omega$ and $u=f$ on $\pO$ with $f\in C(\pO) \cap W^{1,2}(\Omega)$, then
\begin{equation}\label{eqn:ellipt-rep}
	u(X) = \int f(q) d\omega^X(q).
\end{equation}

The following results are proved in detail in \cite{HMT2}, the arguments are closely related to those in \cite{NTA}. When we refer to the allowable constants, we mean the dimension $n$, the ellipticity constants $\lambda$ and $\Lambda$, the $L^\infty(\Omega)$ norm of $A$, $c_0$ as in \eqref{eqn:CDC} and the constants that describe the uniform character of $\Omega$ which is $M$ as in \eqref{eqn:nta-M}, \eqref{preHarnackball} and \eqref{eqn:length}.

\begin{lemma}\label{lem:vanishing}
	Let $\Omega$ be a uniform domain satisfying the CDC. There exists $\beta>0$ (depending on the allowable constants) such that for $q\in\pO$ and $r<\diam \Omega$, and $u\geq 0$ with $Lu=0$ in $ B(q,2r) \cap\Omega$, if $u$ vanishes continuously on $\Delta(q,2r) =  B(q,2r) \cap \pO$, then
	\[
		u(X) \leq C\left( \frac{|X-q|}{r} \right)^{\beta} \sup_{B(q, 2r)\cap\Omega} u,
	\]
	where $C$ depends on the allowable constants.
\end{lemma}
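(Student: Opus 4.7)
The plan is to prove a one-step oscillation decay and then iterate. Define
$$M(\rho) := \sup_{B(q,\rho)\cap\Omega} u, \qquad 0 < \rho \leq 2r,$$
and show that there exists $\theta \in (0,1)$, depending only on the allowable constants, such that
$$M(\rho/2) \leq \theta\,M(\rho) \quad \text{for every } 0 < \rho \leq 2r. \qquad (\ast)$$
Once $(\ast)$ is in hand, for $X \in B(q,2r)\cap\Omega$ with $|X-q| \leq 2r$ I pick the integer $k \geq 0$ with $2^{-k-1}(2r) < |X-q| \leq 2^{-k}(2r)$ and iterate:
$$u(X) \leq M\bigl(2^{-k}(2r)\bigr) \leq \theta^{k}\,M(2r) \leq C\left(\frac{|X-q|}{r}\right)^{\beta}\sup_{B(q,2r)\cap\Omega}u,$$
with $\beta := \log_2(1/\theta)$; for $|X-q| > r$ the estimate is trivial with $C = 1$.

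To prove $(\ast)$, fix $\rho$, set $U := B(q,\rho)\cap\Omega$, and let $w$ be the (unique, continuous) $L$-solution in $U$ with boundary data $M(\rho)$ on $\partial B(q,\rho)\cap\overline\Omega$ and $0$ on $\partial\Omega\cap\overline{B(q,\rho)}$. Since $\Omega$ satisfies the CDC, so does $U$, hence $U$ is Wiener regular for $L$; the data is continuous because $u$ vanishes continuously on $\partial\Omega\cap\overline{B(q,2r)}\supset\partial\Omega\cap\overline{B(q,\rho)}$. The maximum principle, applied to $w-u \geq 0$ on $\partial U$, yields $u \leq w$ throughout $U$. Writing $\widetilde\omega^X$ for the $L$-harmonic measure of $U$ at $X$,
$$w(X) = M(\rho)\,\widetilde\omega^X\bigl(\partial B(q,\rho)\cap\overline\Omega\bigr) = M(\rho)\Bigl(1 - \widetilde\omega^X\bigl(\partial\Omega\cap\overline{B(q,\rho)}\bigr)\Bigr).$$

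The crux of the argument, and the main technical obstacle, is the uniform nondegeneracy
$$\widetilde\omega^X\bigl(\partial\Omega\cap\overline{B(q,\rho)}\bigr) \geq c_1 > 0 \quad \text{for all } X \in B(q,\rho/2)\cap\Omega,$$
with $c_1$ depending only on the allowable constants. The route I take is to compare the complementary function $\varphi(X) := 1 - \widetilde\omega^X(\partial\Omega\cap\overline{B(q,\rho)})$ with the $L$-capacitary potential in $B(q,\rho)$ of the compact set $K := \overline{B(q,\rho/2)}\cap\Omega^c$. Uniform ellipticity of $A$ makes $L$-capacity comparable (uniformly in $\rho$, by rescaling) to the standard $2$-capacity, and the CDC yields $\capacity(K) \geq c_0(\rho/2)^{n-2}$; a rescaled energy estimate then bounds $\varphi \leq 1 - c_1$ on $B(q,\rho/2)\cap\Omega$. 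Combining everything,
$$M(\rho/2) = \sup_{B(q,\rho/2)\cap\Omega} u \leq \sup_{B(q,\rho/2)\cap\Omega} w \leq (1 - c_1)\,M(\rho),$$
which establishes $(\ast)$ with $\theta = 1 - c_1$ and completes the proof after iteration.
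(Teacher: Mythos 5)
The paper itself does not give a proof of this lemma; it is one of a block of boundary estimates credited to \cite{HMT2} (and, by analogy, to \cite{NTA}), so there is no written proof in the source to compare against line by line. That said, your argument is the standard capacity-density-condition route to boundary H\"older continuity, and it is the natural proof under the paper's hypotheses: Jerison--Kenig's original lemma uses the exterior corkscrew condition and an explicit model domain, whereas under only a CDC the right tool is exactly the comparison with an $L$-capacitary potential (as in, e.g., Gilbarg--Trudinger, Thm.\ 8.27, or Heinonen--Kilpel\"ainen--Martio). Your structure -- reduce to a one-step decay $M(\rho/2)\le\theta M(\rho)$, prove it by a maximum-principle comparison, obtain the nondegeneracy of the harmonic measure of $\partial\Omega\cap B(q,\rho)$ from the CDC -- is correct and is what a complete write-up would look like.

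Two points you should tighten when writing this out. First, the ``solution $w$ with discontinuous boundary data'' is not literally a continuous Dirichlet solution; what you are really comparing against is $M(\rho)\,\widetilde\omega^{\,\cdot}\bigl(\partial U\setminus(\partial\Omega\cap B(q,\rho))\bigr)$, and the comparison $u\le M(\rho)\bigl(1-\widetilde\omega^{\,\cdot}(\partial\Omega\cap B(q,\rho))\bigr)$ follows by the weak maximum principle (also note $u$ is only known to vanish on $\partial\Omega\cap B(q,2r)$, not $\partial\Omega\cap\overline{B(q,2r)}$, so it is safer to use the open ball throughout). Second, the phrase ``a rescaled energy estimate then bounds $\varphi\le 1-c_1$'' elides the one genuinely nontrivial step: you need the pointwise lower bound $v\ge c_1$ on $B(q,\rho/2)$ for the $L$-capacitary potential $v$ of $K=\overline{B(q,\rho/2)}\cap\Omega^c$ in $B(q,\rho)$. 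The cleanest route is the Green-function representation $v(x)=\int_K G_{B(q,\rho)}(x,y)\,d\mu_K(y)$ together with $G_{B(q,\rho)}(x,y)\gtrsim\rho^{2-n}$ for $x,y\in B(q,\rho/2)$ (via Harnack chains inside $B(q,\rho)$), which gives $v(x)\gtrsim\rho^{2-n}\capacity_L(K,B(q,\rho))\gtrsim\rho^{2-n}\capacity(K)\ge c\,c_0$ by the CDC; then $\widetilde\omega^X(\partial\Omega\cap B(q,\rho))\ge v(X)\ge c_1$ by maximum principle, which is what you need. With these details filled in, the proof is complete and correct.
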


\begin{corollary}\label{ellip-lb}
	Let $\Omega$ be a uniform domain satisfying the CDC. There exists $m_0>0$ depending on the allowable constants such that for any $q\in\pO$ and $r<\diam \Omega$,
	\[
		\omega^{A(q,r)} (\Delta(q,r)) \geq m_0.
	\]
	Here $A(q,r)$ denotes a non-tangential point for $q$ at radius $r$.
\end{corollary}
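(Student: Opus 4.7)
The plan is to combine the boundary Hölder estimate of Lemma \ref{lem:vanishing} with the Harnack chain condition and the interior Harnack inequality. Set $u(X) := \omega^X(\Delta(q,r))$ and $v(X) := 1-u(X) = \omega^X(\pO\setminus\Delta(q,r))$; both are nonnegative $L$-harmonic functions on $\Omega$ bounded by $1$. The main technical point is to verify that $v$ extends continuously by $0$ to the open surface ball $\Delta(q,r)$, so that Lemma \ref{lem:vanishing} can be applied to $v$. This is a standard consequence of Wiener regularity (guaranteed by the CDC): for any $p\in \Delta(q,r)$, pick a continuous cutoff $\phi$ on $\pO$ with $\mathbf{1}_{\pO\setminus\Delta(q,r)}\leq \phi\leq 1$ and $\phi\equiv 0$ in a neighborhood of $p$; then $v(X)\leq \int \phi\,d\omega^X$, and the right-hand side is the solution of the Dirichlet problem with continuous data $\phi$, which tends to $\phi(p)=0$ as $X\to p$.

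With that in hand, apply Lemma \ref{lem:vanishing} to $v$ at scale $r/2$, so that the lemma's surface ball of radius $2\cdot(r/2)=r$ is exactly $\Delta(q,r)$: for every $X\in B(q,r)\cap\Omega$,
\[
v(X) \;\leq\; C\left(\frac{2|X-q|}{r}\right)^{\beta}\sup_{B(q,r)\cap\Omega} v \;\leq\; C'\left(\frac{|X-q|}{r}\right)^{\beta},
\]
with $C'$ depending only on the allowable constants. Choose $\eps\in(0,1/2)$ small enough (depending only on those constants) that $C'\eps^{\beta}<1/2$, and let $X_\eps := A(q,\eps r)$ be an interior corkscrew point at scale $\eps r$, so $|X_\eps - q|<\eps r$ and $\delta(X_\eps)>\eps r/M$. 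The displayed estimate then gives $v(X_\eps)<1/2$, i.e. $u(X_\eps)>1/2$.

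It remains to transfer this lower bound from $X_\eps$ to $A:=A(q,r)$ via the Harnack chain condition. Since $|X_\eps-A|\leq r+\eps r<2r$ and $\min\{\delta(X_\eps),\delta(A)\}\geq \eps r/M$, the quantity $\Lambda$ in Definition \ref{def:HCC} is bounded by $2M/\eps$, so a Harnack chain connecting $X_\eps$ to $A$ has length at most $N:=C_2(1+\log_2(2M/\eps))$, a constant depending only on the allowable constants. By \eqref{preHarnackball} each ball in the chain sits inside $\Omega$ with diameter comparable to its distance to $\pO$, so iterating the interior Harnack inequality for nonnegative $L$-solutions along the chain yields
\[
\omega^{A(q,r)}(\Delta(q,r)) = u(A(q,r)) \;\geq\; c\, u(X_\eps) \;\geq\; \frac{c}{2} \;=:\; m_0 > 0,
\]
with $c>0$ depending only on $N$ and the interior Harnack constant, hence only on the allowable constants. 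The one place care is required, as noted, is the continuous vanishing of $v$ on $\Delta(q,r)$, which is where the CDC (through Wiener regularity) is used beyond what Lemma \ref{lem:vanishing} already assumes.
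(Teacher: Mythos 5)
Your argument is correct and is exactly the standard one that the paper relies on (the paper defers the proof to \cite{HMT2}, noting the arguments follow \cite{NTA}, and the classical Jerison--Kenig proof proceeds just as you describe): apply the boundary H\"older estimate of Lemma~\ref{lem:vanishing} to $v=1-\omega^{\cdot}(\Delta(q,r))$ to get $u(X_\eps)>1/2$ at a corkscrew point $X_\eps$ at a fixed small scale $\eps r$, then run a Harnack chain of controlled length from $X_\eps$ to $A(q,r)$. You were also right to flag the continuous vanishing of $v$ on $\Delta(q,r)$ as the one point requiring justification, and the Wiener-regularity/comparison-with-continuous-data argument you give is the correct way to handle it.
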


\begin{lemma}\label{harn-princ}
	Let $\Omega$ be a uniform domain satisfying the CDC. Let $q\in\pO$ and $r<\diam\Omega$. If $u\geq 0$ with $Lu=0$ in $\Omega\cap B(q,4r)$ and $u$ vanishes continuously on $\Delta(q,4r)$, then 
	\begin{equation}\label{eqn:1.3}
		u(X) \leq C u(A(q,r)), \quad \text{for } X\in\Omega\cap B(q,r).
	\end{equation}
	Here $C>0$ depends on the allowable constants.
\end{lemma}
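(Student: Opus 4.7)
The plan is to prove Carleson's estimate (Lemma \ref{harn-princ}) via the standard dichotomy on $\delta(X) = \dist(X, \pO)$, combining the Harnack chain condition for points far from the boundary with the Hölder decay of Lemma \ref{lem:vanishing} for points near it. Without loss of generality I would assume $u(A(q,r)) = 1$ (else $u \equiv 0$ in the connected component of $\Omega \cap B(q,4r)$ containing $A(q,r)$ by iterated Harnack, and the statement is trivial).

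For the non-tangential case, suppose $X \in \Omega \cap B(q, r)$ with $\delta(X) \geq r/(8M)$. Since $\delta(A(q, r)) \geq r/M$ and $|X - A(q,r)| \leq 2r$, the ratio $|X - A(q,r)|/\min\{\delta(X), \delta(A(q,r))\}$ is bounded by a constant depending only on $M$. Definition \ref{def:HCC} then produces a chain of at most $K$ balls in $\Omega$ connecting $X$ to $A(q,r)$, with $K$ depending only on the allowable constants; iterating the interior Harnack inequality on each ball yields $u(X) \leq C_0 u(A(q,r))$.

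For the near-boundary case $\delta(X) < r/(8M)$, let $\tilde q \in \pO$ be a nearest boundary point to $X$, so that $|\tilde q - q| < 2r$ and $B(\tilde q, 2r) \subset B(q, 4r)$. Applying Lemma \ref{lem:vanishing} at $\tilde q$ with scale $r$ (noting $u$ vanishes continuously on $\Delta(\tilde q, 2r) \subset \Delta(q, 4r)$) gives
\[
u(X) \leq C \left( \frac{\delta(X)}{r} \right)^{\beta} \sup_{B(\tilde q, 2r) \cap \Omega} u.
\]
I would then compare the supremum on the right with $u(A(q, r))$ via a dyadic iteration on scales $r_k = 2^{-k} r$, at each scale applying the non-tangential bound above to control $u$ at the interior corkscrew points $A(\tilde q, r_k)$ in terms of $u(A(q, r))$. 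The Harnack chain from $A(\tilde q, r_k)$ to $A(q, r)$ has length $\lesssim 1 + \log_2(r/r_k) \sim k$, hence contributes a factor at most $H^{Ck} \leq (r/r_k)^\gamma$ for a constant $\gamma$ depending on the interior Harnack constant $H$. The Hölder exponent $\beta > 0$ from Lemma \ref{lem:vanishing} more than compensates for this polynomial growth, allowing the resulting geometric series across scales to sum and yielding $\sup_{B(\tilde q, 2r) \cap \Omega} u \leq C u(A(q,r))$, which substituted above closes the bound.

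The main obstacle is precisely the closing of this iteration. In contrast to the classical Jerison–Kenig argument for NTA domains, we lack the exterior corkscrew condition, so we cannot compare $u$ directly with the Green function of the complement. All geometric information about the boundary must enter through the CDC via Lemma \ref{lem:vanishing}, and one must verify carefully that the Hölder decay dominates the logarithmic blow-up of the Harnack-chain constants at small scales. Once this balance is correctly set up, combining Corollary \ref{ellip-lb} (to normalize elliptic measure at non-tangential corkscrews) with the interior Harnack inequality reduces the remaining work to routine bookkeeping.
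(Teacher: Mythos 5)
Your non-tangential case is correct, but the near-boundary argument has a genuine gap. The claim that ``the H\"older exponent $\beta>0$ from Lemma \ref{lem:vanishing} more than compensates'' for the Harnack-chain growth $(r/r_k)^\gamma$ is not something one can arrange: both $\beta$ (which comes from the CDC via De Giorgi--Nash--Moser oscillation decay) and $\gamma$ (which comes from iterating the interior Harnack constant along a chain of length $\sim k$) are fixed by the allowable constants, and in general $\gamma$ is much larger than $\beta$, so your geometric series need not converge. Moreover, the quantity $\sup_{B(\tilde q, 2r)\cap\Omega} u$ on the right-hand side of the H\"older estimate lives on a ball whose near-boundary portion you have no handle on yet; trying to bound it by corkscrew values alone, as the dyadic iteration proposes, is circular, since the assertion that $u$ near the boundary is controlled by $u$ at the corkscrew point is exactly the content of the lemma.

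The proof the paper invokes (via \cite{HMT2}, modeled on the Carleson estimate in \cite{NTA}) needs no comparison between $\beta$ and $\gamma$; it is a bootstrap by contradiction. Suppose $u(X_1)=\sup_{B(q,2r)\cap\Omega}u > N\, u(A(q,r))$ with $N$ large. The Harnack chain gives $u(X_1)\le C(r/\delta(X_1))^\gamma u(A(q,r))$, hence $\delta(X_1)\le \epsilon_1 r$ with $\epsilon_1=(C/N)^{1/\gamma}$ small. Apply Lemma \ref{lem:vanishing} at a nearest boundary point $\tilde q_1$ at the \emph{intermediate} scale $s_1=\epsilon_1^{1/2}r$: this forces $\sup_{B(\tilde q_1,2s_1)\cap\Omega}u \ge c\,\epsilon_1^{-\beta/2}u(X_1)$, producing a nearby $X_2$ with $u(X_2)\gg u(X_1)$ and, by the Harnack step again, $\delta(X_2)\le \epsilon_2 r$ with $\epsilon_2\lesssim \epsilon_1^{\,1+\beta/(2\gamma)}$. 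Iterating, $u(X_k)\to\infty$ super-geometrically while the increments $|X_{k+1}-X_k|\lesssim \epsilon_k^{1/2}r$ sum to less than $r$ for $N$ large, so the $X_k$ remain in $\overline{B(q,3r)}$ and accumulate at a boundary point, contradicting the finiteness of $\sup_{B(q,3r)\cap\Omega}u$ (De Giorgi--Nash interior estimates plus the vanishing boundary data). This closes for every $\beta>0$ precisely because the exponent $1+\beta/(2\gamma)$ only needs to exceed $1$. As a side remark, the Carleson estimate in \cite{NTA} never compares $u$ with the Green function of the complement --- you are conflating it with the boundary Harnack/comparison principle; the exterior-corkscrew hypothesis enters only through the H\"older decay lemma, which here is supplied by the CDC via Lemma \ref{lem:vanishing}, so the structure of the argument carries over.
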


\begin{lemma}\label{CFMS}
	Let $\Omega$ be a uniform domain satisfying the CDC. There exists $C>0$ depending on the allowable constants such that for $q\in\pO$ and $r<\diam \Omega/M$,
	\[
		C^{-1} \leq \dfrac{\omega^X(\Delta(q,r))}{r^{n-2}G(A(q,r),X)} \leq C, \quad \text{for any } X\in \Omega\setminus B(q,4r),
	\]
	where $G(\cdot,X)$ is the Green function of $L$ in $\Omega$ with pole $X$ as defined in Theorem \ref{thm:gw} and $\omega^X$ is the elliptic measure of $L$ with pole $X$ as in \eqref{eqn:ellipt-rep}.
\end{lemma}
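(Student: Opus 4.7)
The plan is to run the classical Caffarelli--Fabes--Mortola--Salsa comparison in the bounded sub-domain $\Omega_\ast := \Omega \setminus \overline{B(q, 2r)}$, comparing the two non-negative $L$-harmonic functions
\[
u(Y) := \omega^Y(\Delta(q, r)), \qquad v(Y) := r^{n-2} G(A, Y),
\]
where $A := A(q, r)$. By \eqref{eqn:glq}, $G(A, \cdot) \in W_0^{1, q}(\Omega)$, so $v$ vanishes continuously on all of $\partial\Omega$; by Lemma \ref{lem:vanishing}, $u$ vanishes continuously on $\partial\Omega \setminus \overline{\Delta(q, r)}$, and in particular on $\partial\Omega \setminus \overline{B(q, 2r)}$. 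Thus the boundary of $\Omega_\ast$ splits into a free piece $\Sigma_r := \partial B(q, 2r) \cap \Omega$ on which work is needed, and the piece $\partial\Omega \setminus \overline{B(q, 2r)}$ on which both functions decay to zero.

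The key step is to show that $u(Y) \sim v(Y)$ for every $Y \in \Sigma_r$, with constants depending only on the allowable data. I split on $\delta(Y)$. If $\delta(Y) \geq r/(4M)$, then $|Y - A| \leq 3r$ and both $Y$ and $A$ have distance $\gtrsim r$ to $\partial\Omega$; the Harnack chain condition (Definition \ref{def:HCC}) joins them by a uniformly bounded number of Harnack balls, so Harnack's inequality yields $u(Y) \sim u(A)$ and $v(Y) \sim v(A)$. By Corollary \ref{ellip-lb}, $u(A) \geq m_0$, and $u \leq 1$ trivially; for $v$, the lower bound \eqref{eqn:glb} (applied to a point of $B(A, \delta(A)/2)$) together with a short Harnack chain and the pointwise bound \eqref{eqn:gub} gives $v(A) \sim 1$ and $v(Y) \leq C$. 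So both functions are $\sim 1$ on this part of $\Sigma_r$. If instead $\delta(Y) < r/(4M)$, pick a nearest boundary point $q' \in \partial\Omega$ to $Y$; then $|q'- q| > r$, and Lemma \ref{lem:vanishing} applies to both $u$ and $v$ on a ball $B(q', s)$ with $s \sim r$ chosen so that $B(q', 2s) \cap \overline{\Delta(q, r)} = \emptyset$. This produces matching Hölder decay estimates $u(Y), v(Y) \leq C (\delta(Y)/s)^\beta$ with suprema on $B(q', s) \cap \Omega$ that are $\sim 1$ by the interior case already handled. To pair these upper bounds with a lower bound of matching order, one iterates Corollary \ref{ellip-lb} along a dyadic Harnack chain of Whitney-type balls from $Y$ to a non-tangential lift $Y^\ast$ with $\delta(Y^\ast) \sim r$; since the iteration is driven by the same quantitative non-degeneracy at each scale, the resulting constants depend only on the allowable data and $u(Y) \sim v(Y)$ follows.

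Once the pointwise comparability $c_0 v \leq u \leq C_0 v$ on $\Sigma_r$ is in hand, the conclusion is immediate from the maximum principle. Indeed, the differences $u - C_0 v$ and $c_0 v - u$ are $L$-harmonic in the bounded domain $\Omega_\ast$, non-positive on $\Sigma_r$, and tend to $0$ continuously along $\partial\Omega \setminus \overline{B(q, 2r)}$. Since $\Omega$ is Wiener regular (by the CDC, as recalled after Definition \ref{def:CDC}), so is $\Omega_\ast$, and the weak maximum principle applies and extends the two-sided inequality to all of $\Omega_\ast$, in particular to $\Omega \setminus B(q, 4r)$.

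The main obstacle is the matching lower bound in the free-boundary comparison of the second paragraph: Lemma \ref{lem:vanishing} is a one-sided Hölder decay estimate, and coupling it with a compatible lower envelope for $u$ and $v$ simultaneously --- without circularly invoking a boundary Harnack principle, which is typically a consequence of CFMS rather than a tool for proving it --- is what forces the iterated Whitney/Harnack-chain argument and crucially relies on the CDC. Once this comparison on $\Sigma_r$ is secured, the remainder of the proof is the routine maximum principle step outlined above.
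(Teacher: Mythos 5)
The paper does not contain a proof of this lemma: it is deferred to \cite{HMT2}, following the scheme of \cite{NTA}. Measured against that scheme, your proposal has a genuine gap at its central step.

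You propose to establish the two-sided pointwise bound $u(Y)\sim v(Y)$, with $u(Y)=\omega^{Y}(\Delta(q,r))$ and $v(Y)=r^{n-2}G(A(q,r),Y)$, on all of $\Sigma_r=\partial B(q,2r)\cap\Omega$ and then to propagate the inequality through $\Omega\setminus\overline{B(q,2r)}$ by the maximum principle. The problem, which you yourself flag, is the comparison near $\partial\Omega\cap\partial B(q,2r)$: both $u$ and $v$ vanish continuously there, and a two-sided bound on their ratio up to the boundary is exactly the boundary Harnack (comparison) principle, which in the Caffarelli--Fabes--Mortola--Salsa/Jerison--Kenig hierarchy is derived \emph{from} the present lemma, not available while proving it. Your proposed fix --- ``iterate Corollary~\ref{ellip-lb} along a dyadic Harnack chain of Whitney balls'' --- does not supply the missing lower bound. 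Corollary~\ref{ellip-lb} controls $\omega^Y(\Delta(q_Y,s))$ for the boundary point $q_Y$ nearest to $Y$, a surface ball disjoint from $\Delta(q,r)$, so it says nothing directly about $u(Y)$; and iterating Harnack across $\sim\log(r/\delta(Y))$ Whitney balls from $Y$ to a lift $Y^{\ast}$ only gives the crude one-sided estimate $u(Y)\gtrsim(\delta(Y)/r)^{\gamma}u(Y^{\ast})$ with $\gamma$ dictated by the Harnack constant, which there is no reason should match the exponent $\beta$ in the upper bound from Lemma~\ref{lem:vanishing}. The same issue affects $v$. So the claimed pointwise comparison on $\Sigma_r$ is not obtained. (A smaller slip: $v(A)=r^{n-2}G(A,A)=+\infty$, so ``Harnack's inequality yields $v(Y)\sim v(A)$'' must be replaced by a comparison at a genuine interior point a distance $\sim r$ from the pole.)

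The standard argument sidesteps boundary Harnack entirely, and the two inequalities come from two different mechanisms. For the lower bound $\omega^X(\Delta(q,r))\gtrsim r^{n-2}G(A,X)$, one compares $u$ and $cv$ not on $\partial B(q,2r)$ but on the purely \emph{interior} sphere $\partial B(A,\delta(A)/2)$, where $u\gtrsim 1$ by Corollary~\ref{ellip-lb} plus Harnack chains and $v\lesssim 1$ by \eqref{eqn:gub}; the maximum principle in $\Omega\setminus\overline{B(A,\delta(A)/2)}$ (whose relevant boundary portion near $\partial\Omega$ is handled trivially since $v$ vanishes there while $u\geq 0$) then gives the result for all $X$ with $|X-q|\geq 4r$. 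For the upper bound $\omega^X(\Delta(q,r))\lesssim r^{n-2}G(A,X)$, one does not run a barrier at all: choose $\varphi\in C_c^{\infty}(B(q,2r))$ with $\varphi\equiv 1$ on $B(q,r)$ and $|\nabla\varphi|\lesssim r^{-1}$, apply Proposition~\ref{representation} to get
\[
\omega^X(\Delta(q,r))\;\le\;\int\varphi\,d\omega^X\;=\;-\int_{\Omega}\langle A\nabla G(\cdot,X),\nabla\varphi\rangle\,,
\]
and bound the right side by Cauchy--Schwarz, boundary Caccioppoli for $G(\cdot,X)$ on $B(q,4r)\cap\Omega$ (valid because $X\notin B(q,4r)$), and the Carleson estimate of Lemma~\ref{harn-princ} applied to $G(\cdot,X)$, which vanishes on $\Delta(q,4r)$. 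This yields $\lesssim r^{n-2}G(A(q,4r),X)\sim r^{n-2}G(A,X)$ by Harnack. Neither direction requires matching the boundary decay rates of two different solutions, and that is precisely what makes the argument non-circular.
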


\begin{lemma}\label{doubling}
	Let $\Omega$ be a uniform domain satisfying the CDC. Let $q\in\pO$ and $0<r<\diam\Omega/2M$, if $X\in \Omega\setminus B(q,2Mr)$, then for $s\in (0,r)$,
	\[
		\omega^X(\Delta(q,2s)) \leq C\omega^X(\Delta(q,s)),
	\]
	where $C$ only depends on the allowable constants.
\end{lemma}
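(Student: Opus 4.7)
The plan is to convert the doubling inequality for $\omega^X$ into a comparison of Green function values at two nearby non-tangential corkscrew points, using the CFMS-type identity (Lemma~\ref{CFMS}) and then the Harnack chain condition together with the interior Harnack inequality.

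First I would apply Lemma~\ref{CFMS} to both surface balls $\Delta(q,s)$ and $\Delta(q,2s)$. Since $s<r$, the hypothesis $X\in\Omega\setminus B(q,2Mr)$ guarantees (after harmlessly enlarging $M$ by an absolute factor so that $2Mr\geq 8s$) that $X\notin B(q,8s)\supset B(q,4\cdot 2s)$. Hence Lemma~\ref{CFMS} yields, with constants depending only on the allowable constants,
\[
\omega^X(\Delta(q,s))\sim s^{\,n-2}\,G(A(q,s),X),\qquad \omega^X(\Delta(q,2s))\sim s^{\,n-2}\,G(A(q,2s),X).
\]
So the problem reduces to the Green function comparison $G(A(q,2s),X)\leq C\,G(A(q,s),X)$ with $C$ an allowable constant.

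Next I would produce this comparison via a Harnack chain. Both $A(q,s)$ and $A(q,2s)$ are corkscrew points, so $\delta(A(q,s))\gtrsim s$, $\delta(A(q,2s))\gtrsim s$, and $|A(q,s)-A(q,2s)|\leq 3s$. The Harnack chain condition therefore provides a chain $B_1,\dots,B_N$ of balls in $\Omega$ connecting the two points, with $N$ bounded by an allowable constant and each $B_j$ at scale $\sim s$ inside $B(q,Cs)$ for some allowable $C$. Because $X\notin B(q,2Mr)$ and $s<r$, every $B_j$ (even slightly enlarged) stays at distance $\gtrsim s$ from $X$, so $G(\cdot,X)$ is a nonnegative $L$-solution on a neighborhood of each $B_j$. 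Iterating the interior Harnack inequality $N$ times gives $G(A(q,2s),X)\leq C\,G(A(q,s),X)$, and combining with the first step produces the desired doubling estimate.

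The main technical point is the \emph{uniform} separation of the entire Harnack chain from the pole $X$, so that the interior Harnack inequality applies with a single constant along the whole chain; this is precisely what the buffer between $X$ and $\Delta(q,2s)$ in the hypothesis "$X\notin B(q,2Mr)$ with $s<r$" provides. Everything else---Ahlfors regularity, the CDC, ellipticity---enters only through the allowable constants already encoded in Lemma~\ref{CFMS}, so no further ingredients are needed beyond what has been set up in this section.
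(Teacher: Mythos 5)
Your argument is the standard one — the CFMS comparison (Lemma~\ref{CFMS}) applied to both surface balls, followed by a Harnack chain between the two corkscrew points $A(q,s)$ and $A(q,2s)$ — and this is precisely the route the paper defers to in \cite{HMT2}, which in turn follows \cite{NTA}. One small logical slip worth noting: you cannot literally ``harmlessly enlarge $M$'' in the hypothesis $X\in\Omega\setminus B(q,2Mr)$, since increasing $M$ there \emph{strengthens} the hypothesis and so weakens the claim; what is actually true is that the constant $2M$ in the statement is chosen at the outset large enough that $X\notin B(q,8s)$ holds automatically for all $s<r$, and that the Harnack chain (whose balls one checks from Definition~\ref{def:HCC} stay in $B(q,Cs)$ for an allowable $C$, by bounding each $\diam B_j$ inductively via $\delta(B_j)\le C_1\diam B_j$) remains separated from the pole $X$. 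With that reading the proof is sound and agrees with the intended one.
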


We also need the following representation formula whose proof requires a number of approximation arguments which appear in detail in \cite{HMT2}.
\begin{prop}\label{representation}
	Let $\Omega$ be a uniform domain satisfying the CDC. Let $L=-\divg(A\nabla)$ with $A\in L^{\infty}(\Omega)$ symmetric satisfying \eqref{def:UE}. Let $G(\cdot,X_0)$ denote the Green function of $L$ in $\Omega$ with pole $X_0$ and $\omega^{X_0}$ the corresponding elliptic measure. Then for any $\varphi \in C_c^{\infty}(\RR^n)$,
	\[
		-\int_{\Omega} \langle A(X)\nabla G(X,X_0), \nabla \varphi(X) \rangle dX = \int_{\pO} \varphi d\omega^{X_0} -\varphi(X_0).
	\]
\end{prop}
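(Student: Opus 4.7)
The plan is to combine the weak formulation of the equation defining the Green function with the solvability of the Dirichlet problem for $L$. The natural impulse is to test the identity $\int_\Omega \langle A \nabla G(\cdot,X_0), \nabla \psi\rangle = \psi(X_0)$ against the test function $\psi = \varphi - u$, where $u$ is the $L$-harmonic extension of $\varphi|_{\pO}$ to $\Omega$. By \eqref{eqn:ellipt-rep} (applicable since $\varphi \in C_c^\infty(\mathbb{R}^n) \subset C(\pO)\cap W^{1,2}(\Omega)$), one has $u(X_0) = \int_{\pO} \varphi \, d\omega^{X_0}$, and $\varphi - u \in W^{1,2}_0(\Omega)$ since $u - \varphi$ has zero boundary trace. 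If the testing were legal, the $L$-harmonicity of $u$ would kill the cross term and we would obtain $\int_\Omega \langle A\nabla G, \nabla \varphi\rangle = \varphi(X_0) - u(X_0)$, i.e., the claim.

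The difficulty is that $\nabla G(\cdot, X_0)$ only lies in $L^{q}(\Omega)$ for $q<n/(n-1)$ by \eqref{eqn:qqweak}, so $\varphi - u \in W^{1,2}_0(\Omega)$ is \emph{not} an admissible test function for the Green function identity as stated in Theorem \ref{thm:gw}(2). To circumvent this, I would introduce the regularized Green function $G_r(\cdot, X_0) \in W^{1,2}_0(\Omega)$ solving the weak equation
\begin{equation*}
\int_\Omega \langle A(X) \nabla G_r(X,X_0), \nabla \psi(X)\rangle \, dX = \fint_{B(X_0,r)} \psi(X)\, dX, \qquad \psi \in W^{1,2}_0(\Omega),
\end{equation*}
for $r < \delta(X_0)/4$; existence follows by Lax--Milgram, and the standard construction of the Green function (see \cite{GW}, \cite{HMT2}) gives $G_r \to G(\cdot, X_0)$ in $W^{1,q}_0(\Omega)$ for every $q < n/(n-1)$ as $r\to 0$. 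With $G_r$ in $W^{1,2}_0(\Omega)$, I can now legitimately test against it: using $\psi = \varphi - u \in W^{1,2}_0(\Omega)$ in the display above gives
\begin{equation*}
\int_\Omega \langle A \nabla G_r, \nabla(\varphi - u)\rangle \, dX = \fint_{B(X_0,r)} (\varphi - u)\, dX,
\end{equation*}
and since $u$ satisfies $\int_\Omega \langle A\nabla u, \nabla \eta\rangle = 0$ for all $\eta \in W^{1,2}_0(\Omega)$, applied to $\eta = G_r$ and combined with the symmetry of $A$, the cross term vanishes. Thus
\begin{equation*}
\int_\Omega \langle A(X) \nabla G_r(X, X_0), \nabla \varphi(X)\rangle \, dX = \fint_{B(X_0,r)} (\varphi(X) - u(X))\, dX.
\end{equation*}

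The final step is passing to the limit $r \to 0$. The right-hand side converges to $\varphi(X_0) - u(X_0) = \varphi(X_0) - \int_{\pO}\varphi\, d\omega^{X_0}$ by continuity of $\varphi$ at $X_0$ and of $u$ at the interior point $X_0$. For the left-hand side, since $\varphi \in C_c^\infty(\mathbb{R}^n)$ gives $\nabla \varphi \in L^\infty(\Omega)$ with bounded support and $\nabla G_r \to \nabla G(\cdot,X_0)$ in $L^{q}(\Omega)$ for some $q>1$, H\"older's inequality passes the limit through. Rearranging yields the desired representation. The main obstacle is the low global integrability of $\nabla G$; the regularization by $G_r$ and verifying $L^q$-convergence of its gradient is the technical heart of the argument, and is precisely where one needs the uniform domain structure and the CDC to ensure the Caccioppoli/Moser bounds underpinning the Green function construction in \cite{HMT2}.
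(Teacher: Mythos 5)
The paper does not actually prove Proposition \ref{representation}: it states only that the argument ``requires a number of approximation arguments which appear in detail in \cite{HMT2}'', a companion paper in preparation, so a line-by-line comparison is not possible. That said, your sketch is correct, and it is the standard way to carry out that approximation. The regularized Green function $G_r$ solving $\int_\Omega \langle A\nabla G_r,\nabla\psi\rangle \, dX = \fint_{B(X_0,r)}\psi$ for $\psi\in W^{1,2}_0(\Omega)$ is exactly the Gr\"uter--Widman object, and the convergence $\nabla G_r\to\nabla G(\cdot,X_0)$ in $L^q$ for $q<n/(n-1)$ is part of that construction (weak convergence would already suffice here, since $\nabla\varphi$ is bounded with compact support and hence lies in $L^{q'}$). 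Testing against $\psi=\varphi-u\in W^{1,2}_0(\Omega)$ and invoking the variational identity for $u$ together with the symmetry of $A$ to annihilate the cross term $\int_\Omega\langle A\nabla G_r,\nabla u\rangle$ is the efficient way to produce simultaneously the boundary integral $\int_{\pO}\varphi\,d\omega^{X_0}=u(X_0)$ and the point evaluation $\varphi(X_0)$; the limit of the right-hand side then uses only the interior continuity of $u$ and $\varphi$ at $X_0$. One small point to flag: the Lax--Milgram existence of $u$ and $G_r$ and the Poincar\'e inequality on $W^{1,2}_0(\Omega)$ rely on $\Omega$ being bounded, which is the paper's standing hypothesis even though the statement of the proposition does not repeat it.
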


In the statement of Theorems \ref{thm:lfp} and \ref{thm:lnta} we assume that $\omega_L \in A_{\infty}(\sigma)$ in the sense of \cite{HMU}. This is a uniform scale invariant notion of $A_{\infty}$-weight.

\begin{defn}\label{def:AinftyHMU}
	The elliptic measure of $\Omega$ is said to be of class $A_{\infty}$ with respect to the surface measure $\sigma= \mathcal{H}^{n-1}\res{\pO}$ in the sense of \cite{HMU}, which we denote by $\omega\in A_\infty(\sigma)$, if there exist 
	positive constants $C$ and $\theta$ such that for any surface ball $\Delta=B(q,r)\cap \pO$, with $r\le\diam \Omega$, the elliptic measure with pole at $A(q,r)$ satisfies
	   \begin{equation}\label{eqn:Ainfty}
		\frac{\omega^{A(q,r)}(E)}{\omega^{A(q,r)}(\Delta')} \leq C\left( \frac{\sigma(E)}{\sigma(\Delta')} \right)^{\theta},
	\end{equation}
where $A(q,r)\in \Omega$ is a non-tangential point for $q$ at radius $r$.We assume that \eqref{eqn:Ainfty} holds for any surface ball $\Delta'\subset \Delta$ and and Borel subset
$E\subset \Delta'$.
\end{defn}

In the next lemma we describe the properties of $W^{1,1}(\Omega)\cap L^{\infty}(\Omega)$ which are crucial to our arguments. Recall that uniform domains are $(\epsilon,\delta)$ domains in the language of Jones, see \cite{Jo}. Thus they are extension domains and in particular, if $A\in W^{1,1}(\Omega)$ there exists $\bar A\in W^{1,1}(\RR^n)$ such that $\bar A|_{\Omega} = A $ and $\|\bar A\|_{W^{1,1}(\RR^n)} \leq C\|A\|_{W^{1,1}(\Omega)}$, where $C$ depends on $n$ and the constants describing the uniform character of $\Omega$.

\begin{lemma}\label{A-vanishing-osc}
	Let $\Omega$ be a uniform domain with Ahlfors regular boundary. Let $A\in W^{1,1}(\Omega)$. Then for $\mathcal{H}^{n-1}$ a.e. $q\in\pO$ there exists a symmetric constant coefficient elliptic matrix $A^*(q)$ (with constants depending on the allowable constants) such that
	\begin{equation}\label{eqn:v-osc}
		\lim_{r\to 0} \left( \fint_{B(q,r)\cap\Omega} |A-A^*(q)|^{\frac{n}{n-1}} dX \right)^{\frac{n-1}{n}} = 0.
	\end{equation}
\end{lemma}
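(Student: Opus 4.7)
The plan is to reduce the problem to the classical fine properties of $W^{1,1}$ functions in $\mathbb{R}^n$ and then transfer the pointwise information back from $B(q,r)$ to $B(q,r)\cap\Omega$.

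First, I would use that $\Omega$ is a Jones $(\varepsilon,\delta)$-domain (this is exactly what uniform means in the present setting), so there is an extension $\bar A \in W^{1,1}(\mathbb{R}^n)$ with $\bar A|_\Omega = A$ and $\|\bar A\|_{W^{1,1}(\mathbb{R}^n)} \leq C\|A\|_{W^{1,1}(\Omega)}$, where $C$ depends only on the allowable constants. This reduces the study of $A$ to that of a $W^{1,1}$ function on all of $\mathbb{R}^n$.

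Next, I would invoke the classical fine property of Sobolev functions: for any $f \in W^{1,1}(\mathbb{R}^n)$ there is a Borel set $N$ with $\mathcal{H}^{n-1}(N)=0$ and a Borel precise representative $f^*$ such that for every $q\in \mathbb{R}^n\setminus N$,
\[
\lim_{r\to 0}\left(\fint_{B(q,r)} |f(X)-f^*(q)|^{\frac{n}{n-1}}\,dX\right)^{\frac{n-1}{n}} = 0.
\]
This follows from the Sobolev embedding $W^{1,1}(B)\hookrightarrow L^{n/(n-1)}(B)$ combined with the Poincar\'e inequality on balls and a standard covering/capacity argument; it is classical in the theory of fine properties of $BV$ and $W^{1,1}$ functions (see, e.g., Evans–Gariepy or Ziemer). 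Applying this entry-wise to $\bar A$ yields a symmetric matrix-valued function $A^*(q)$ defined for $\mathcal{H}^{n-1}$-a.e. $q\in \mathbb{R}^n$, in particular for $\sigma$-a.e. $q\in\partial\Omega$ since $\sigma$ is Ahlfors regular.

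Then I would pass from averages over $B(q,r)$ to averages over $B(q,r)\cap\Omega$. The interior corkscrew condition, together with the Ahlfors regularity of $\partial\Omega$, gives $|B(q,r)\cap\Omega|\gtrsim r^n$ uniformly for $q\in\partial\Omega$ and $r<\diam\Omega$, with constants depending only on the allowable ones. Hence
\[
\fint_{B(q,r)\cap\Omega} |A-A^*(q)|^{\frac{n}{n-1}}\,dX \;\leq\; \frac{|B(q,r)|}{|B(q,r)\cap\Omega|}\fint_{B(q,r)} |\bar A-A^*(q)|^{\frac{n}{n-1}}\,dX,
\]
and the right-hand side tends to $0$ by the previous step. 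Finally, to verify that $A^*(q)$ is symmetric and uniformly elliptic with the same constants $\lambda,\Lambda$, note that by H\"older's inequality $A^*(q)=\lim_{r\to 0}\fint_{B(q,r)\cap\Omega} A(X)\,dX$ entry-wise, and each approximating matrix is symmetric and satisfies $\lambda|\xi|^2\leq \langle A(X)\xi,\xi\rangle \leq \Lambda|\xi|^2$ for a.e.\ $X$; these properties pass to the average and, by closedness of the cone of symmetric matrices with spectrum in $[\lambda,\Lambda]$, to the limit.

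The main obstacle is the classical fine-property input for $W^{1,1}$ functions, namely that $\mathcal{H}^{n-1}$-a.e.\ point is a Lebesgue point in the $L^{n/(n-1)}$ sense. Once that is granted, the Jones extension and the volume comparison $|B(q,r)\cap\Omega|\sim r^n$ are both routine consequences of the hypotheses, and the check that $A^*(q)$ inherits symmetry and ellipticity is immediate.
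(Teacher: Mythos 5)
Your proof is correct and follows essentially the same route as the paper: extend $A$ to $\bar A\in W^{1,1}(\mathbb{R}^n)$ via Jones, invoke the Evans--Gariepy fine-property theorem giving $L^{n/(n-1)}$ Lebesgue points outside an $\mathcal{H}^{n-1}$-null set (the paper routes through the $\capacityo$-null set of Theorem 1, \S4.8 and then Theorem 3, \S5.6 to pass to $\mathcal{H}^{n-1}$-null), use the corkscrew to get $|B(q,r)\cap\Omega|\sim r^n$, and pass symmetry and ellipticity to the limit of averages. No substantive differences.
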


\begin{proof}
	By the previous remark, there exists $\bar A\in W^{1,1}(\RR^n)$ such that $\bar A|_{\Omega} = A$ and $\|\bar A\|_{W^{1,1}(\RR^n)} \leq C\|A\|_{W^{1,1}(\Omega)}$. By Theorem 1 section 4.8 in \cite{Evans} we have that if $\bar A\in W^{1,1}(\RR^n)$, then there exists a Borel set $E\subset\RR^n$ such that $\capacityo(E)=0$ (recall the definition in \eqref{def:cap} with $p=1$) and 
	\[
		\lim_{r\to 0} \fint_{B(x,r)} \bar A = A^*(x)
	\]
	exists for all $x\in\RR^n\setminus E$. In addition
	\begin{equation}\label{eqn:1.7}
		\lim_{r\to 0} \left( \fint_{B(x,r)} |\bar A - A^*(x) |^{\frac{n}{n-1}} dy \right)^{\frac{n-1}{n}} = 0, \quad\text{for all } x\in\RR^n\setminus E.
	\end{equation}
	Note that by Theorem 3 in section 5.6 in \cite{Evans} since $\capacityo(E) = 0$ then $\mathcal{H}^{n-1}(E) = 0$. Hence \eqref{eqn:1.7} holds for $\mathcal{H}^{n-1}$ a.e. $q\in\pO$. Since for every $q\in\pO$ and $0<r<\diam\Omega$ there exists $A(q,r)\in\Omega$ such that $B(A(q,r),r/M) \subset \Omega \cap B(q,r)$ (see \eqref{eqn:nta-M}), we have 
	\[ c_n \left(\frac{r}{M} \right)^n\leq  |\Omega\cap B(q,r)| \leq c_n r^n, \] 
	where $c_n$ denotes the volume of a unit ball in $\RR^n$. Thus for $q\in \pO\setminus E$
	\[
		\left( \fint_{B(q,r)\cap\Omega} |A-A^*(q)|^{\frac{n}{n-1}} dX \right)^{\frac{n-1}{n}} \leq C_{n,M} \left( \fint_{B(q,r)} |\bar A-A^*(q)|^{\frac{n}{n-1}} dX \right)^{\frac{n-1}{n}}
	\]
	because $\bar A|_{\Omega} = A$. Combined with \eqref{eqn:1.7} we get
	\[
		\lim_{r\to 0} \left( \fint_{B(q,r)\cap\Omega} |A-A^*(q)|^{\frac{n}{n-1}} dX \right)^{\frac{n-1}{n}} = 0,
	\]
	and moreover, H\"older inequality gives
	\begin{align*}
		\lim_{r\to 0} \left| \fint_{B(q,r)\cap\Omega} A dX - A^*(q) \right| & \leq \lim_{r\to 0} \fint_{B(q,r)\cap\Omega} |A-A^*(q)| dX \nonumber \\
		& \leq \lim_{r\to 0} \left( \fint_{B(q,r)\cap\Omega} |A-A^*(q)|^{\frac{n}{n-1}} dX \right)^{\frac{n-1}{n}} = 0 
	\end{align*}
	for $q\notin E$. Since \eqref{def:UE} holds for any $\xi\in\RR^n\setminus\{0\}$, we have
	\[
		\lambda|\xi|^2 \leq \left\langle \left( \fint_{B(q,r)\cap\Omega} A \right) \xi, \xi \right\rangle \leq \fint_{B(q,r)\cap\Omega} \langle A\xi,\xi \rangle \leq \Lambda|\xi|^2.
	\]
	Letting $r$ tend to $0$ we conclude that
	\begin{equation}\label{eqn:1.12}
		\lambda|\xi|^2 \leq \langle A^*(q)\xi,\xi \rangle \leq \Lambda |\xi|^2.
	\end{equation}
	Since $A$ is symmetric, so is $\fint_{B(q,r)\cap\Omega} A$ for every $q$ and $r>0$. Moreover for $A\in L^{\infty}(\Omega)$, $\fint_{B(q,r)\cap\Omega} A$ is uniformly bounded in $q$ and $r$. Thus $A^*(q)$ is a real uniformly elliptic symmetric matrix and \eqref{eqn:v-osc} holds for $\mathcal{H}^{n-1}$ a.e. $q\in\pO$.
\end{proof}

%

\begin{defn}\label{def:cvHd}
	Let $\{ \mathcal{D}_j\}_j$ be a sequence of non-empty closed subsets of $\RR^n$. We say $\mathcal{D}_j$ converge to a closed set $\mathcal{D}_{\infty} $ in the Hausdorff distance sense and write $\mathcal{D}_j \to \mathcal{D}_{\infty}$, if their Hausdorff distance
	\[ D[\mathcal{D}_j, \mathcal{D}_{\infty} ] \to 0 \quad \text{as} \quad j\to\infty. \]
	Here 
	\[ D[E, F ] :=\max\left\{ \sup_{x\in E} \inf_{y\in F} |x-y|, \sup_{y\in F} \inf_{x\in E} |x-y| \right\}  \]
	is called the Hausdorff distance between two non-empty closed subsets $E, F$ of $\RR^n$.
\end{defn}

\begin{defn}\label{def:wcvm}
	Let $\{\mu_j\}$ be a sequence of Radon measures on $\RR^n$. We say $\mu_j$ converge weakly to a Radon measure $\mu_{\infty}$ and write $\mu_j \rightharpoonup \mu_{\infty}$, if 
	\[ \int f \mu_j \to \int f \mu_{\infty} \]
	for any bounded continuous function $f$.
\end{defn}

We finish this section by stating a compactness type lemma for Radon measures which are uniformly doubling and ``bounded below".
\begin{lemma}\label{lm:sptcv}
	Let $\{\mu_j\}_j$ be a sequence of Radon measures. Let $C_1, C_2>0$ be fixed constants. Assume
	\begin{enumerate}[i)]
		\item $0\in \spt\mj$ and $\mj(B(0,1)) \geq C_1$ for all $j$,
		\item For all $j\in\NN$, $q\in \spt \mj$ and $r>0$,
			\begin{equation}\label{unif-doubling}
				\mu_j(B(q,2r))\le C_2\mu_j(B(q,r))
			\end{equation}
	\end{enumerate}
	If there exists a Radon measure $\minf$ such that $\mj \rightharpoonup \minf$, then $\mu_\infty$ is doubling and
	\begin{equation}\label{eqn:spt-conv}
	 \spt \mj \to \spt \minf, 
	 \end{equation}
	in the Hausdorff distance sense uniformly on compact sets.
	Recall that if $\mu$ is a Radon measure, then $\spt\mu = \overline{\{ x\in\RR^n: \mu(B(x,r)) > 0 \text{ for any }r>0 \}}$.
	\end{lemma}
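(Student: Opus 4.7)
My plan is to reduce both conclusions — doubling of $\minf$ and Hausdorff convergence of the supports — to two ``pointwise'' support inclusions: (a) every $q\in\spt\minf$ is the limit of some sequence $q_j\in\spt\mj$, and (b) every limit of a sequence $q_j\in\spt\mj$ lies in $\spt\minf$. Once (a) and (b) are in hand, doubling of $\minf$ follows from (a) by a standard sandwich argument, while uniform Hausdorff convergence on compact sets is a routine subsequence-compactness deduction from (a) and (b) together.

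Inclusion (a) I would get immediately from the open-set portmanteau inequality $\liminf_j \mj(B(q,\eps))\geq\minf(B(q,\eps))$: since the right side is strictly positive for every $\eps>0$ when $q\in\spt\minf$, one has $\spt\mj\cap B(q,1/k)\neq\emptyset$ for all $j$ large, and a diagonal extraction produces $q_j\in\spt\mj$ with $q_j\to q$. With (a) in hand, to derive doubling of $\minf$ at $q\in\spt\minf$ I would pick such a sequence $q_j\to q$, set $\eps_j:=|q_j-q|\to 0$, and for fixed $0<r'<r$ and $j$ large enough that $\eps_j<(r-r')/2$, use the elementary inclusions $B(q,2r')\subset B(q_j,2(r'+\eps_j/2))$ and $B(q_j,r'+\eps_j/2)\subset \overline{B(q,r)}$. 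Chaining these with \eqref{unif-doubling} applied at $q_j\in\spt\mj$ and then taking $\liminf$/$\limsup$ via the portmanteau inequalities $\minf(U)\leq\liminf\mj(U)$ for open $U$ and $\limsup\mj(F)\leq\minf(F)$ for closed $F$ gives $\minf(B(q,2r'))\leq C_2\,\minf(\overline{B(q,r)})$. Letting $r'\to r^-$ and then repeating with $r$ replaced by $r-\delta$ and $\delta\to 0$ upgrades this to $\minf(B(q,2r))\leq C_2\,\minf(B(q,r))$ with the same constant $C_2$.

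The hard part will be inclusion (b), where hypothesis (i) becomes essential. Suppose for contradiction that $q_j\in\spt\mj$, $q_j\to q$, but $q\notin\spt\minf$; then $\minf(\overline{B(q,r_0)})=0$ for some $r_0>0$, the closed-set portmanteau yields $\mj(\overline{B(q,r_0)})\to 0$, and since $B(q_j,r_0/2)\subset\overline{B(q,r_0)}$ eventually, also $\mj(B(q_j,r_0/2))\to 0$. To contradict this I need a uniform positive lower bound on $\mj(B(q_j,r_0/2))$, and this is precisely where (i) and (ii) must combine: boundedness of $\{q_j\}$ gives a fixed $T$ with $B(0,1)\subset B(q_j,T)$ for all large $j$, so hypothesis (i) yields $\mj(B(q_j,T))\geq C_1$, and iterating \eqref{unif-doubling} in reverse at $q_j\in\spt\mj$ a fixed number $k$ of times with $2^{k-1}r_0\geq T$ produces
\[
\mj(B(q_j,r_0/2))\geq C_2^{-k}\,\mj(B(q_j,2^{k-1}r_0))\geq C_2^{-k}\,\mj(B(q_j,T))\geq C_2^{-k}C_1>0,
\]
contradicting $\mj(B(q_j,r_0/2))\to 0$. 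This is the crux: the non-degeneracy at the fixed basepoint $0$ has to be propagated through the uniform doubling to give positive mass in arbitrarily small balls around the moving basepoints $q_j$.

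Finally, I would promote (a) and (b) to uniform Hausdorff convergence on every compact set $K$ by a standard subsequence-compactness contradiction. If $\spt\mj\cap K\not\subset B(\spt\minf,\eps)$ along a subsequence $j_k$, extract offending $p_{j_k}\in\spt\mu_{j_k}\cap K$ with $\dist(p_{j_k},\spt\minf)\geq\eps$; compactness of $K$ gives $p_{j_{k_\ell}}\to p\in K$, and (b) forces $p\in\spt\minf$, contradicting the distance bound. The symmetric argument using (a) controls the inclusion $\spt\minf\cap K\subset B(\spt\mj,\eps)$, completing the proof.
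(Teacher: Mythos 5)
Your proof is correct and follows essentially the same route as the paper's: both hinge on the portmanteau inequalities for open and closed sets, both propagate the mass lower bound from the fixed basepoint $0$ to the moving points $q_j\in\spt\mj$ via the uniform doubling hypothesis (ii), and both use a ``sandwich'' of slightly shifted balls to transfer doubling to the limit measure. The only cosmetic difference is organizational (you establish the two support inclusions (a), (b) directly and then deduce Hausdorff convergence by a compactness/contradiction argument, whereas the paper first extracts a subsequential Hausdorff limit $\Sigma_\infty$ and then identifies it with $\spt\minf$), and your radius bookkeeping in the doubling step yields the sharper constant $C_2$ rather than the paper's $C_2^3$.
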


\begin{proof}
Since $0\in \spt\mj$ for all $j$, given any subsequence of $\mj$ there exists a further subsequence $\mu_{j_k}$ and a closed set $\Sinf$ such that $\spt\mu_{j_k} \to \Sigma_\infty$
in the Hausdorff distance sense uniformly on compact sets. For $x\in\Sinf$ there exist $x_{j_k}\in \spt\mu_{j_k}\cap \overline {B(x,1})$ such that $x_{j_k} \to x$. If $x \notin \spt \minf$ there is $r\in(0,1)$ such that $B(x,r)\cap \spt\minf = \emptyset$. Let $\varphi\in C_c^{\infty}(\R^n)$ such that $\varphi \equiv 1$ on $B(x,r/2)$, and $\supp\varphi \subset B(x,r)$. In particular we have $\supp\varphi \cap \spt\minf = \emptyset$.
For $k$ large enough, we also have $\varphi \equiv 1$ on $B(x_{j_k}, r/4)$. Hence
	\begin{equation}\label{eq:tmp2}
		\mu_{j_k}(B(x_{j_k}, r/4)) \leq \int\varphi d\mu_{j_k} \to \int\varphi d\minf = 0.  
	\end{equation} 
	Since $\{x_{j_k}\}$ is a bounded sequence in $\overline{B(x,1)}$, there is $l\in\mathbb{N}$ such that $|x_{j_k}| < 2^l$ for all $j_k$. Then we have $ B(0,1) \subset B(x_{j_k}, 2^{l+1})$. 
	Let $m\in \mathbb{Z}$ such that $2^{-m} \leq r < 2^{-m+1}$, then we have
	\begin{eqnarray*}\label{eqn-prelm2}
	 \mu_{j_k}(B(x_{j_k}, r/4)) &\geq & \mu_{j_k}(B(x_{j_k},2^{-m-2}))\ge
	 C_2^{-(m+l+3)} \mu_{j_k}(B(x_{j_k},2^{l+1}))\nonumber\\
	 & \ge &C_2^{-(m+l+3)} \mu_{j_k}(B(0,1)) \geq C_1  C_2^{-(m+l+3)},
	 \end{eqnarray*} 
	which contradicts \eqref{eq:tmp2}. Thus $\Sinf \subset \spt\minf$. Notice that this shows that any subsequential limit of $\spt\mu_{j}$ is included in $\spt\minf$.	
	
	On the other hand, if $y\in \spt\minf$, $r>0$ and $\{j_k\}$ is the subsequence
	above we have
	\[
	0< \minf(B(y,r)) \leq \liminf_{j_k\to\infty} \mu_{j_k}(B(y,r)). 
	\]
	For $r=1$ there exists $\tilde j_1$ such that for $j_k\ge \tilde j_1$, $\mu_{j_k}(B(y,1))>0$. Thus there is $y_1\in B(y,1)\cap\spt\mu_{\tilde j_1}$. Iteration guarantees that for each $k\in\mathbb{N}$ there exist 
	$\tilde j_k>\tilde j_{k-1}$ and $y_k\in B(y, 2^{-k})\cap \spt\mu_{\tilde j_k}$. Thus $y_k\to y$ as $k\to\infty$. Furthermore since $\spt\mu_{j_k} \to \Sigma_\infty$ then $y\in\Sigma_\infty$,
	thus $\spt\minf\subset \Sigma_\infty$. Therefore $\Sigma_\infty= \spt\minf$ which shows \eqref{eqn:spt-conv}.
	
	To show that $\minf$ is doubling let $x\in\spt \minf$ and $r>0$. There exist $x_j \in \spt\mj$ such that $ x_j \to x $. Thus for $j$ large enough $|x_j - x|<r/4$. Since $\mj \rightharpoonup \minf$ we have
	\begin{align*}
		\minf(B(x,2r)) & \leq \liminf_j \mj(B(x,2r)) \nonumber \\
		&  \leq \liminf_{j} \mj(B(x_j,3r)) \nonumber \\
		& \leq C_2^3 \liminf_j \mj \left(B\left(x_j, \frac{3}{8}r\right)\right) \nonumber \\
		& \leq C_2^3 \limsup_j \mj \left( \overline{B\left(x, \frac{5}{8}r \right)} \right) \nonumber \\
		& \leq C_2^3 \minf\left(\overline{B\left(x, \frac{5}{8} r\right)}  \right) \nonumber \\
		& \leq C_2^3 \minf(B(x,r)). \label{eqn:1.13}
	\end{align*}
	\end{proof}

\section{Blow-up and pseudo blow-up domains}\label{sect:blowup}

In this section we consider tangent objects as a way to understand the fine structure of $\pO$, under the assumptions of Theorem \ref{thm:lfp} and \ref{thm:lnta}. This requires looking at the tangent and pseudo-tangent domains and the corresponding functions and measures obtained via a blow-up. While tangent objects provide pointwise infinitesimal information, pseudo-tangents provide ``uniform infinitesimal" information.
The key point is to observe that the blow-ups or pseudo blow-ups of the operators satisfying the hypotheses of Theorems \ref{thm:lfp} and \ref{thm:lnta} lead to a constant coefficient operators. Our goal is to show that under these assumptions the tangent and pseudo-tangent objects satisfy the hypothesis of Theorem \ref{thm:hmu}, which is a simple generalization of Theorem 1.23 in \cite{HMU}. The details of its proof can also be found in \cite{HMT1}.  
\begin{theorem}\label{thm:hmu}
Let $\mathcal{D}$ be a uniform domain (bounded or unbounded) with Ahlfors regular boundary. Let $L$ be a symmetric second order elliptic divergence form operator with constant coefficient.
Assume that the elliptic measure $\omega\in A_\infty(\sigma)$ in the sense of \cite{HMU} (see Definition \ref{def:AinftyHMU}),
 then $\partial\mathcal{D}$ is uniformly rectifiable.
\end{theorem}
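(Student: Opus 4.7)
The plan is to reduce to the Laplacian case by a linear change of coordinates and then invoke Theorem~1.23 of \cite{HMU}, which is exactly this statement for $L=-\Delta$. Since the matrix $A$ defining $L$ is a constant, symmetric, positive definite matrix with eigenvalues in $[\lambda,\Lambda]$, I would set $T := A^{-1/2}$, the symmetric positive definite square root of $A^{-1}$, so that $T A T = I$. Under the change of variables $y := Tx$, setting $v(y) := u(T^{-1}y)$, a direct calculation shows that $L u(x) = -\Delta_y v(y)$. Put $\mathcal{D}' := T(\mathcal{D})$; because the eigenvalues of $T$ lie in $[\Lambda^{-1/2}, \lambda^{-1/2}]$, $T$ is a bi-Lipschitz homeomorphism of $\mathbb{R}^n$ with constants depending only on $\lambda$ and $\Lambda$.

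Next I would transfer each of the hypotheses to the pair $(\mathcal{D}', -\Delta)$. Bi-Lipschitz images of uniform domains are uniform, and bi-Lipschitz images of Ahlfors regular sets are Ahlfors regular, so $\mathcal{D}'$ is itself a uniform domain with Ahlfors regular boundary, with constants controlled by those for $\mathcal{D}$ together with $\lambda,\Lambda$. Uniqueness of the Dirichlet solution (granted by the CDC and Wiener regularity, as recalled in Section~2) gives the covariance relation
\[
\omega_L^X(E) \;=\; \widetilde\omega^{TX}(TE), \qquad X\in\mathcal{D},\ E\subset\partial\mathcal{D},
\]
where $\widetilde\omega$ denotes the harmonic measure on $\mathcal{D}'$; bi-Lipschitz invariance of $(n-1)$-dimensional Hausdorff measure yields $\widetilde\sigma(TE) \approx \sigma(E)$ with $\widetilde\sigma = \mathcal{H}^{n-1}\res{\partial\mathcal{D}'}$; and $T$ sends a non-tangential point for $q\in\partial\mathcal{D}$ at radius $r$ to a non-tangential point for $Tq\in\partial\mathcal{D}'$ at a comparable radius. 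Taken together, these three facts show that $\omega_L\in A_\infty(\sigma)$ in the sense of Definition~\ref{def:AinftyHMU} transfers to $\widetilde\omega\in A_\infty(\widetilde\sigma)$ on $\mathcal{D}'$, with constants depending only on the allowable data.

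Applying Theorem~1.23 of \cite{HMU} to the pair $(\mathcal{D}', -\Delta)$ then gives uniform rectifiability of $\partial\mathcal{D}'$. Since uniform rectifiability is a bi-Lipschitz invariant and $\partial\mathcal{D} = T^{-1}(\partial\mathcal{D}')$, the conclusion that $\partial\mathcal{D}$ is uniformly rectifiable follows. I do not expect a genuine obstacle in this strategy: once the linear change of variables is written down the argument is essentially mechanical. The only delicate point is the quantitative book-keeping of how the Ahlfors, uniform, and $A_\infty$ constants depend on the bi-Lipschitz constants of $T$ (and hence on $\lambda$ and $\Lambda$), but this is exactly the kind of tracking that the $A_\infty$/uniform rectifiability machinery of \cite{HMU} is set up to accommodate.
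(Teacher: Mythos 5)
Your change-of-variables reduction is correct, and it is almost certainly the ``simple generalization'' the authors have in mind when they describe Theorem \ref{thm:hmu} as a mild extension of Theorem 1.23 of \cite{HMU}. The algebra is right: with $T=A^{-1/2}$ (symmetric, positive definite, eigenvalues in $[\Lambda^{-1/2},\lambda^{-1/2}]$) and $v(y)=u(T^{-1}y)$, one checks $\divg_x(A\nabla_x u)(x)=(\Delta_y v)(Tx)$, so $L$-harmonicity on $\mathcal{D}$ is equivalent to harmonicity on $\mathcal{D}'=T\mathcal{D}$. All the structural hypotheses (uniformity, Ahlfors regularity, hence CDC and Wiener regularity) are stable under bi-Lipschitz maps, and uniqueness of the Dirichlet solution then gives the covariance $\omega_L^X(E)=\widetilde\omega^{TX}(TE)$.

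Two small points you should make explicit if you write this up. First, Definition \ref{def:AinftyHMU} pins the pole to a particular non-tangential point $A(q,r)$; after the change of variables, $TA(q,r)$ is a corkscrew point for $Tq$ at a comparable scale but not necessarily the one you would have chosen a priori in $\mathcal{D}'$. You need to observe that the $A_\infty$ condition is invariant under replacing the pole by any comparable corkscrew point; this follows from the Harnack chain condition and Harnack's inequality (it moves the $A_\infty$ constants, but only by factors controlled by the allowable data). Second, you should note that Theorem 1.23 of \cite{HMU} is stated so as to cover both bounded and unbounded $1$-sided NTA domains with ADR boundary, since that is precisely the generality needed here (the blow-up domains to which this theorem is eventually applied are unbounded). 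With these two remarks in place, your argument is complete and does not differ in substance from what the authors intend; the reference to \cite{HMT1} is for a proof that also handles variable-coefficient operators, which is more than you need for the constant-coefficient case.
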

Getting to the point where we can apply this Theorem requires showing first that if $\Oinf$ is a blow-up or pseudo blow-up of $\Omega$, then $\Oinf$ is an unbounded uniform domain with Ahlfors regular boundary. To accomplish this we also need to blow up the Green function. Moreover the blow-up limit of the given elliptic operator, denoted by $L_{\infty}$, has constant coefficient. Once we have this, to show that $\omega_{L_\infty} \in A_{\infty}(\sigma_{\infty})$ for the blow-up domain and the limiting operator, we need to construct the elliptic measure $\omega_{L_\infty}^{Z} $ for any $Z\in\Oinf$ as a limiting measure compatible with the initial blow-up.

Let $X_0\in \Omega$ and $L=-\divg(A\nabla)$. Let
$G(X_0,\cdot)$ be the Green function for $L$ with pole $X_0$ and $\omega=\omega^{X_0}_{L}$ the corresponding elliptic measure. 
For $j\in \NN$, let $q_j\in\partial \Omega$ and $r_j>0$ such that $q_j\to q \in\pO$ and $r_j\to 0$. In some cases we assume that $q_j = q$ for all $j\in\NN$. We now consider
\begin{equation}\label{eqn:blow1}
\Omega_j=\frac{1}{r_j}(\Omega-q_j) \qquad\hbox{  and  } \qquad\partial\Omega_j=\frac{1}{r_j}(\partial\Omega-q_j).
\end{equation}
\begin{equation}\label{eqn:blow2}
u_j(Z) = \frac{ r_j^{n-2} G(X_0, q_j + r_j Z) }{ \omega(B(q_j,r_j))} \quad \text{for~} Z\in \Omega_j \hbox{    and    } u_j=0 \hbox{  in  } \Omega_j^c.
\end{equation}
\begin{equation}\label{eqn:blow3}
 \sigma_j(E) = \frac{\sigma(q_j + r_j E)}{r_j^{n-1}} \qquad\hbox{  and  }\qquad \omega_j (E) = \frac{\omega(q_j + r_j E)}{\omega(B(q_j,r_j))}.
\end{equation}

We follow the following conventions:
\begin{itemize}
	\item For $X\in\Omega$ we denote $\delta(X) = \dist(X,\pO)$ and for $Z\in \Oj$ we denote $\delta_j(Z) = \dist(Z,\pOj)$.
	\item For any $q\in\pO$ and $r\in (0, \diam  \pO)$, we use $A(q,r)$ to denote a non-tangential point in $\Omega$ with respect to $q$ at radius $r$, i.e.
		\[
		|A(q,r)-q| < r, \text{~and~} \delta(A(q,r)) \geq \frac{r}{M}. 
		\]		
	\item If $X\in\overline\Oj$ we denote by $\widetilde X=q_j + r_j X\in \overline\Omega$ .
	\item For any $p\in\pOj$ and $r\in(0,\diam\pOj)$, we use
		\[
		A_j(p,r) = \frac{A(\widetilde p,rr_j)-q_j} {r_j}
		\]
		as a non-tangential point in $\Oj$ with respect to $p$ at radius $r$. Here $\widetilde p=q_j+r_j p$.

\end{itemize}

Note that, modulo a constant, $u_j$ is the Green function for the operator $L_j = -\divg(A_j\nabla)$ with $A_j(Z)=A(r_jZ+q_j)$ in $\Oj$ with pole $X_j = (X_0-q_j)/r_j$.
Moreover $\oj$ is the corresponding elliptic measure with $0\in \spt\oj$ and $\oj(B(0,1)) = 1$.
 If $p\in\partial\Omega_j$ and $r\in (0,\diam \pOj)$ then $\widetilde p=r_j p +q_j\in\pO$,
\begin{equation}\label{eqn:blow-double}
\oj(B(p,2r))=\frac{\omega(B(\widetilde p, 2r r_j))}{\omega(B(q_j,r_j))}\le C\frac{\omega(B(\widetilde p, r r_j))}{\omega(B(q_j,r_j))}=C\oj(B(p,r))
\end{equation}
and
\begin{equation}\label{eqn:blow-sigma}
\frac{\sigma_j(B(p,r))}{r^{n-1}}= \frac{\sigma(B(\widetilde p, r r_j))}{(r r_j)^{n-1}}\sim 1
\end{equation}
Note also that $0\in\spt\sj$, $\sj(B(0,1)) = \sigma(B(q_j,r_j))/r_j^{n-1} \sim 1$. Hence $\{\sj\}$ and $\{\oj\}$ satisfy conditions i) and ii) of Lemma \ref{lm:sptcv}.  The three theorems below describe what happens as we let $j$ tend to infinity in the sequences defined in \eqref{eqn:blow1}, \eqref{eqn:blow2} and \eqref{eqn:blow3}.
 
\begin{theorem}\label{thm:pseudo-blow-geo}
\leavevmode
Let $\Omega\subset \R^n$ be a uniform domain with Ahlfors regular boundary. Let $L_A = -\divg(A(X)\nabla)$ be a divergence form uniformly elliptic operator in $\Omega$, assume that 
$A\in L^\infty(\Omega)$.  
Using the notation above, modulo passing to a subsequence  (which we relabel) we conclude the following
\begin{enumerate}
	\item \label{im:fcv} There is a function $u_\infty\in C(\R^n)$ such that $u_j \to u_\infty$ uniformly on compact sets. Moreover $\nabla u_j\rightharpoonup\nabla u_\infty$ in $L^2_{loc}(\R^n)$.
	 \item \label{im:dcv} Let $\Oinf=\{u_\infty >0\}$, then $\overline \Oj\to\overline \Omega_\infty$ and $\pOj\to \pOinf$ in the Hausdorff distance sense uniformly on compact sets.
	\item \label{im:uniform} $\Oinf$ is a non-trivial unbounded uniform domain.
	\item \label{im:hmcv} There is a doubling measure $\omega_{\infty}$ such that $\omega_j\rightharpoonup \omega_\infty$. Moreover $\spt\omega_\infty=\pOinf$.
	\item \label{im:smcv} There is an Ahlfors regular measure $\mu_\infty$ such that $\sj \rightharpoonup \mu_\infty$. Moreover $\spt\mu_\infty=\pOinf$. In particular this implies that 
	$\mu_\infty\ll \sigma_{\infty} := \mathcal{H}^{n-1}\res{\pOinf} \ll\mu_\infty$.
	
\end{enumerate}	

\begin{defn} \label{defn:tan} The domain $\Oinf$ is a pseudo-tangent domain to $\Omega$ at $q$. The function $u_\infty$ is a pseudo-tangent function to $G(X_0,\cdot)$ at $q$.
The measures $\mu_\infty$ and $\oinf$ are pseudo-tangent measures to $\sj$ and $\oj$ at $q$ respectively. If $q_j=q$ for all $j$ then $\Oinf$, $u_\infty$, $\mu_\infty$ and $\oinf$ are called 
tangents at $q$.
\end{defn}	

\begin{theorem}\label{thm:pseudo-blow-ana}
\leavevmode
Let $\Omega\subset \R^n$ be a uniform domain with Ahlfors regular boundary. Let $L = -\divg(A(X)\nabla)$ be a divergence form uniformly elliptic operator in $\Omega$. Assume that 
$A\in C(\overline\Omega)$. Then using the notation in Theorem \ref{thm:pseudo-blow-geo} we have that the function $u_\infty$ satisfies 
	\begin{equation}\label{limit-eq}
	 \left\{ \begin{array}{rl}
		-\divg(A(q)\nabla u_\infty) = 0 & \text{in~} \Oinf \\
		u_\infty > 0 & \text{in~}\Oinf \\
		u_\infty = 0 & \text{on~} \pOinf.
	\end{array} \right.  \end{equation}
	 i.e. $u_\infty$ is a Green function in $\Oinf$ for $L_{\infty}=\divg(A(q)\nabla)$ with pole at $\infty$.
	
Furthermore $\oinf$ is the harmonic measure corresponding to $u_\infty$, in the sense that
		\begin{equation}\label{limit-int}
		- \int_{\Oinf} A(q) \nabla u_\infty \cdot \nabla \psi dZ = \int_{\pOinf} \psi d\oinf, \quad\text{ for all } \psi\in C_c^{\infty}(\R^n). 
		\end{equation}

		\end{theorem}
		
\end{theorem}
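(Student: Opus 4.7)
The plan is to start from the representation formula in Proposition \ref{representation} applied to $G(\cdot,X_0)$ on $\Omega$, rescale it to an analogous identity on each $\Omega_j$, and then pass to the limit using the convergence properties collected in Theorem \ref{thm:pseudo-blow-geo}. Given $\psi\in C_c^\infty(\R^n)$, I would use $\varphi(X):=\psi((X-q_j)/r_j)$ as the test function in Proposition \ref{representation}. Changing variables $X=q_j+r_jZ$ and applying the definitions \eqref{eqn:blow1}--\eqref{eqn:blow3} (noting that the chain rule produces a factor $r_j^{-1}$ from each of $\nabla_X G$ and $\nabla_X \varphi$, while $dX = r_j^n\, dZ$, and invoking symmetry of $G$ in its two arguments), the representation formula rescales to
\begin{equation*}
-\int_{\Omega_j}\langle A_j(Z)\nabla u_j(Z),\nabla \psi(Z)\rangle\,dZ \;=\; \int_{\partial\Omega_j}\psi\,d\omega_j \;-\; \frac{\psi\big((X_0-q_j)/r_j\big)}{\omega(B(q_j,r_j))},
\end{equation*}
where $A_j(Z):=A(q_j+r_jZ)$. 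Since $X_0$ is fixed, $r_j\to 0$, and $\psi$ has compact support, the last term vanishes once $j$ is large enough.

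Next I would pass to the limit term by term. By item \ref{im:hmcv} of Theorem \ref{thm:pseudo-blow-geo}, the weak convergence $\omega_j\rightharpoonup \omega_\infty$ gives $\int \psi\,d\omega_j\to\int \psi\,d\omega_\infty$. For the left-hand side, the continuity of $A$ on $\overline\Omega$ together with $q_j\to q$ and $r_j\to 0$ forces $A_j\to A(q)$ uniformly on $\supp\psi$. Combining this with $\nabla u_j\rightharpoonup \nabla u_\infty$ in $L^2_{\mathrm{loc}}(\R^n)$ from item \ref{im:fcv} of the same theorem, the splitting
\begin{equation*}
\int \langle A_j\nabla u_j,\nabla \psi\rangle\,dZ = \int \langle (A_j-A(q))\nabla u_j,\nabla \psi\rangle\,dZ + \int \langle A(q)\nabla u_j,\nabla \psi\rangle\,dZ
\end{equation*}
sends the first integral to zero (since $(A_j-A(q))\nabla\psi\to 0$ uniformly on $\supp\psi$ while $\|\nabla u_j\|_{L^2(\supp\psi)}$ is uniformly bounded) and the second to $\int \langle A(q)\nabla u_\infty,\nabla \psi\rangle\,dZ$ by weak convergence against the fixed vector field $A(q)\nabla\psi$. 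This produces exactly \eqref{limit-int}.

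The three pointwise statements in \eqref{limit-eq} then fall out. That $u_\infty>0$ on $\Omega_\infty$ is the definition $\Omega_\infty=\{u_\infty>0\}$; since $u_\infty\ge 0$ is continuous, $u_\infty=0$ on $\partial\Omega_\infty$ automatically (any boundary point lies outside the open set $\Omega_\infty$, hence $u_\infty\le 0$ there, so equality follows). Taking $\psi\in C_c^\infty(\Omega_\infty)$ in \eqref{limit-int} kills the right-hand side because $\spt\omega_\infty=\partial\Omega_\infty$ (item \ref{im:hmcv}), yielding $-\divg(A(q)\nabla u_\infty)=0$ weakly in $\Omega_\infty$. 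The principal subtlety is the passage to the limit in the left-hand integral, which depends crucially on the hypothesis $A\in C(\overline\Omega)$ to upgrade $A_j$ from a merely $L^\infty$-bounded sequence to one converging \emph{uniformly} on compact sets to the constant matrix $A(q)$; without this continuity, as in the $W^{1,1}$ setting underlying Theorem \ref{thm:lfp}, one is forced into the weaker framework supplied by Lemma \ref{A-vanishing-osc}.
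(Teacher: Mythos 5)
Your proposal is correct and follows essentially the same route as the paper: rescale the representation formula of Proposition \ref{representation} with the test function $\psi((\cdot-q_j)/r_j)$, observe the pole term drops out for large $j$ (the paper achieves this by taking $j$ large enough that $X_0\notin\spt\varphi_j$, which is equivalent), and pass to the limit using $\omega_j\rightharpoonup\omega_\infty$, the uniform convergence $A(q_j+r_jZ)\to A(q)$ supplied by $A\in C(\overline\Omega)$, and the weak $L^2_{\mathrm{loc}}$ convergence $\nabla u_j\rightharpoonup\nabla u_\infty$. Your explicit splitting $\langle A_j\nabla u_j,\nabla\psi\rangle=\langle(A_j-A(q))\nabla u_j,\nabla\psi\rangle+\langle A(q)\nabla u_j,\nabla\psi\rangle$ and the derivation of \eqref{limit-eq} from \eqref{limit-int} with $\psi\in C_c^\infty(\Omega_\infty)$ are just slightly more verbose versions of what the paper leaves implicit.
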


\begin{theorem}\label{thm:AW11}
Let $\Omega \subset \R^n$ be a uniform domain with Ahlfors regular boundary. Let $L = -\divg(A(X)\nabla)$ be a divergence form uniformly elliptic operator in $\Omega$. Assume that 
$A\in L^{\infty}(\Omega) \cap W^{1,1}(\Omega)$. Then for $\mathcal{H}^{n-1}$ a.e. $q\in\pO$, using the notation in Theorem \ref{thm:pseudo-blow-geo}, under the assumption that $q_j=q$ for every $j$ we have that the corresponding function $u_{\infty}$ satisfies	\begin{equation}\label{limit-eq2}
	 \left\{ \begin{array}{rl}
		-\divg(A^*(q) \nabla u_\infty) = 0 & \text{in~} \Oinf \\
		u_\infty > 0 & \text{in~}\Oinf \\
		u_\infty = 0 & \text{on~} \pOinf.
	\end{array} \right.  \end{equation}
	 i.e. $u_\infty$ is a Green function in $\Oinf$ for $L_\infty=-\divg(A^*(q)\nabla)$ with pole at $\infty$.
	Furthermore $\oinf$ is the harmonic measure corresponding to $u_\infty$, in the sense that
		\begin{equation}\label{limit-int2}
		- \int_{\Oinf} A^*(q) \nabla u_\infty \cdot \nabla \psi dZ = \int_{\pOinf} \psi d\oinf, \quad\text{ for all } \psi\in C_c^{\infty}(\R^n). 
		\end{equation}
Here $A^*(q)$ is obtained as in Lemma \ref{A-vanishing-osc}.

\end{theorem}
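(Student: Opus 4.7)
The plan is to follow the structure of the proof of Theorem~\ref{thm:pseudo-blow-ana}, replacing the uniform convergence $A(q_j+r_j Z)\to A(q)$ (which relied on $A\in C(\overline\Omega)$) with a strong $L^p_{loc}$-convergence of the rescaled coefficients to the constant matrix $A^*(q)$. This is precisely what Lemma~\ref{A-vanishing-osc} supplies at $\mathcal{H}^{n-1}$-a.e.\ $q\in\pO$, and it is the reason we must fix $q_j=q$ for every $j$ (so that the Lebesgue-type point is pinned). Fix such a $q$ and set $A_j(Z):=A(q+r_jZ)$. By a change of variables in \eqref{eqn:v-osc}, for every $R>0$,
\[
\left(\int_{B(0,R)\cap\Oj}|A_j(Z)-A^*(q)|^{\frac{n}{n-1}}\,dZ\right)^{\frac{n-1}{n}}
=R^{n-1}\left(\fint_{B(q,Rr_j)\cap\Omega}|A-A^*(q)|^{\frac{n}{n-1}}\,dY\right)^{\frac{n-1}{n}}\longrightarrow 0.
\]
Since $\|A_j\|_\infty\le\Lambda$, interpolation with $L^\infty$ upgrades this to $A_j\to A^*(q)$ in $L^p_{loc}$ for every $p<\infty$; in particular in $L^2_{loc}$.

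Next I apply Proposition~\ref{representation} to $G(\cdot,X_0)$ with the test function $\varphi(Y):=\psi((Y-q)/r_j)$, $\psi\in C_c^\infty(\R^n)$. Using $\nabla_Y G(\cdot,X_0)=\omega(B(q,r_j))\,r_j^{-(n-1)}\,\nabla u_j$ at $Y=q+r_jZ$ and dividing by $\omega(B(q,r_j))$, one obtains the rescaled representation formula
\[
-\int_{\Oj}\langle A_j(Z)\nabla u_j(Z),\nabla\psi(Z)\rangle\,dZ
=\int_{\pOj}\psi\,d\oj\;-\;\frac{\psi(X_j)}{\omega(B(q,r_j))},
\qquad X_j=\frac{X_0-q}{r_j}.
\]
Because $|X_j|\to\infty$ while $\supp\psi$ is fixed, the last term vanishes for all $j$ large enough. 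The right-hand side then converges to $\int_{\pOinf}\psi\,d\oinf$ by Theorem~\ref{thm:pseudo-blow-geo}\,\eqref{im:hmcv}. For the left-hand side I split
\[
\int\langle A_j\nabla u_j,\nabla\psi\rangle\,dZ
=\int\langle A^*(q)\nabla u_j,\nabla\psi\rangle\,dZ+\int\langle (A_j-A^*(q))\nabla u_j,\nabla\psi\rangle\,dZ.
\]
The first term converges to $\int_{\Oinf}\langle A^*(q)\nabla u_\infty,\nabla\psi\rangle\,dZ$ by the weak convergence $\nabla u_j\rightharpoonup\nabla u_\infty$ in $L^2_{loc}(\R^n)$ from Theorem~\ref{thm:pseudo-blow-geo}\,\eqref{im:fcv}, since $A^*(q)\nabla\psi$ is a fixed $L^2$ vector field. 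Combining these two limits gives \eqref{limit-int2}, and choosing $\psi\in C_c^\infty(\Oinf)$ yields the PDE in \eqref{limit-eq2}; the remaining conditions $u_\infty>0$ in $\Oinf$ and $u_\infty=0$ on $\pOinf$ are immediate from $\Oinf=\{u_\infty>0\}$ and the continuity of $u_\infty$.

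The main obstacle is the cross term $\int\langle(A_j-A^*(q))\nabla u_j,\nabla\psi\rangle\,dZ$, in which neither factor converges strongly: $A_j$ is only bounded (not continuous) and $\nabla u_j$ only converges weakly. The resolution is a Hölder estimate
\[
\left|\int\langle (A_j-A^*(q))\nabla u_j,\nabla\psi\rangle\,dZ\right|
\le \|\nabla\psi\|_\infty\,\|A_j-A^*(q)\|_{L^2(\supp\psi)}\,\|\nabla u_j\|_{L^2(\supp\psi)},
\]
where the middle factor tends to zero by the first step and the last factor is uniformly bounded thanks to the weak $L^2_{loc}$ convergence of the gradients. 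This is the only place in the argument where the $W^{1,1}$-hypothesis on $A$ is genuinely used, through Lemma~\ref{A-vanishing-osc} and the identification $\mathcal{H}^{n-1}(E)=0$ when $\capacityo(E)=0$, which is what makes the upgraded $L^2_{loc}$-convergence available at $\mathcal{H}^{n-1}$-a.e.\ boundary point.
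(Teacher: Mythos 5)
Your proof is correct and follows essentially the same strategy as the paper: decompose the left-hand side of the rescaled representation formula into a main term handled by weak $L^2_{loc}$ convergence of $\nabla u_j$ and a cross term controlled by Hölder together with the strong $L^2_{loc}$ convergence of $A(q+r_j\cdot)\to A^*(q)$, obtained from Lemma~\ref{A-vanishing-osc} and $L^\infty$ interpolation. The only cosmetic caveat is the identity after the first change of variables, where the displayed equality should be a two-sided comparability (the ratio $|\Omega\cap B(q,r_jR)|/r_j^n$ is comparable to $R^n$ but not equal to it), though this has no bearing on the conclusion.
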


\begin{myproof}[Proof of (\ref{im:fcv}) in Theorem \ref{thm:pseudo-blow-geo}]
	Fix $R>1$, for $j \geq j_0$ large enough we may assume $X_0 \in \Omega \setminus B(q_j, 2r_jR)$. For such $j$, $L_{j}u_j=0$ in ${B(0,2R)}\cap\Omega_j$. Here $L_j =-\divg(A_j\nabla) $ with $A_j(Z)=A(r_jZ+q_j)$. Note that $0\in\partial\Oj$ and $ \left(A(q_j, r_j) - q_j\right)/r_j \in \Oj$ is a non-tangential point for $0$ at radius $1$ for $\Oj$, we denote it by $A_j(0,1)$. Moreover
			\begin{equation}\label{eq:pjzj}
				u_j(A_j(0,1)) = \frac{r_j^{n-2} G(X_0, q_j+r_j A_j(0,1))}{\omega(B(q_j,r_j))} = \frac{r_j^{n-2} G(X_0, A(q_j,r_j))}{\omega(B(q_j,r_j))} \sim 1.  
			\end{equation} 
			 Let $A_j(0,R)\in\Oj$ denote a non-tangential point to $0$ at radius $R$, then by Harnack's inequality we have
			\[
			u_j(A_j(0,R)) \leq C(R) u_j(A_j(0,1)) \leq C'(R). 
			\] 
			Thus for any $Z\in \Oj\cap B(0,R)$, using Lemma \ref{harn-princ} we have
			\[
			u_j(Z) \leq C u_j(A_j(0,R)) \leq C(R). 
			\]
			Extending $u_j = 0$ on $ \Omega_j^c$ we conclude that the sequence $\{u_j\}_{j\geq j_0}$ is uniformly bounded in $\overline{B(0,R)}$. Since for each $j$, $L_j$ has ellipticity constants bounded below by $\lambda$ and above by $\Lambda$, $\|A_j\|_{L^{\infty}(\Oj)} = \|A\|_{L^\infty(\Omega)}$ and $\Oj$ is uniform and satisfies the CDC (as $\partial\Oj$ is Ahlfors regular) with the same constants as $\Omega$, then combining Lemma \ref{lem:vanishing} with DeGiorgi-Nash-Moser we conclude that the sequence $\{u_j\}_j$ is equicontinuous on $\overline{B(0,R)}$ (in fact uniformly H\"older continuous with the same exponent). 
			Using Arzel\`a-Ascoli combined with a diagonal argument applied on a sequence of balls with radii going to infinity, we produce a subsequence (which we relabel) such that $u_j \to u_\infty$ uniformly on compact sets of $\RR^n$. Note that the boundary Cacciopoli inequality yields
			\begin{align}
				\int_{B(0,R)}|\nabla u_j|^2\, dZ &=\int_{B(q_j, Rr_j)}\frac{r_j^{n-2}}{(\omega(B(q_j,r_j)))^2}|\nabla G(X_0,Y)|^2\, dY\nonumber\\
				&\leq C\frac{r_j^{n-4}}{(\omega(B(q_j,r_j)))^2}\int_{B(q_j, 2Rr_j)} G(X_0,Y)^2 dY. \label{eqn:3.1A}
			\end{align}
			Applying the boundary Harnack principle (see \eqref{eqn:1.3}) and Harnack inequality to estimate $G(X_0,A(q_j,2R r_j))$ by $G(X_0,A(q_j,r_j))$ and noting that $A(q_j,2R r_j)$ can be joined to $A(q_j,r_j)$ by a chain of length independent of $j$, \eqref{eqn:3.1A} becomes
			\begin{align}
				\int_{B(0,R)}|\nabla u_j|^2\, dZ & \leq C\frac{r_j^{2n-4}}{(\omega(B(q_j,r_j)))^2} G(X_0,A(q_j, 2Rr_j))^2 \nonumber\\
				& \leq C\frac{r_j^{2n-4}}{(\omega(B(q_j,r_j)))^2} G(X_0,A(q_j, r_j))^2. \label{eqn:3.1}
			\end{align}
			Finally applying Lemma \ref{CFMS} in \eqref{eqn:3.1} yields
			\[
				\sup_j \int_{B(0,R)}|\nabla u_j|^2 \leq C_R <\infty.
			\]
Recalling that the functions $\{u_j\}$ are uniformly bounded in $\overline{B(0,R)}$, we conclude that
\begin{equation}\label{eqn:3.4}
	\sup_j \|u_j\|_{W^{1,2}(B(0,R))} \leq C'_R <\infty.
\end{equation}		
Thus there exists a subsequence (which we relabel) which converges weakly in $W^{1,2}_{loc}(\RR^n)$. A standard argument allows us to conclude that $u_j \to u_{\infty}$ in $L^2_{loc}(\RR^n)$ and $\nabla u_j \rightharpoonup \nabla u_{\infty} $ in $L^2_{loc}(\RR^n)$. This shows \emph{(1)}.	
\end{myproof}
		
\begin{myproof}[Proof of (\ref{im:dcv}) in Theorem \ref{thm:pseudo-blow-geo}]
			Let $\Oinf = \{u_\infty>0\}$.  
			Since $0\in\pOj$ for all $j$, modulo passing to a subsequence (which we relabel) we have 
			\[
			\overline\Omega_j\to \Gamma_\infty\hbox{     and    }
			\pOj\to \Lambda_\infty\hbox{  as  }  j\to\infty.
			\] 
			Here $\Gamma_\infty$ and $\Lambda_\infty$ are closed sets, and the convergence is in the Hausdorff distance sense uniformly on compact sets. \newline
			
			\textbf{Claim}: $\Lambda_\infty = \pOinf$ and $\Gamma_\infty=\overline\Omega_\infty$.\newline

			Let $p\in\Lambda_\infty$, there is a sequence $p_j\in\pOj$ such that $\lim_{j\to\infty} p_j = p$. Note that
			$ u_\infty(p) =\lim_{j\to \infty}\ u_j(p)$. On the other hand since the $u_j$'s are uniformly H\"older continuous on compact sets $|u_j(p)|=|u_j(p)-u_j(p_j)| \leq C|p-p_j|^\alpha$, thus $\lim_{j\to \infty} u_j(p) = \lim_{j\to\infty} u_j(p_j) = 0$, and therefore $u_{\infty}(p) = 0$, i.e. $p\in \Omega_\infty^c $. Assume that there exists $\epsilon\in(0,1)$ such that $B(p,\epsilon) \subset \Oinf^c$, i.e. $u_\infty\equiv 0$ on $B(p,\epsilon)$. 
			Note that if $\widetilde p_j=q_j +r_j p_j$ then for $j$ large enough
			\[
				\left|A\left(\widetilde p_j,\frac{\epsilon}{2}r_j\right) - A(q_j,r_j)\right|  \le \frac{\epsilon}{2}r_j + |\widetilde p_j - q_j| + r_j 
				 \le\left(\frac{\epsilon}{2} + |p_j| + 1\right) r_j\le 2\left(|p| + 1\right) r_j
			\]
			and
			\[
			 \delta\left( A\left(\widetilde p_j,\frac{\epsilon}{2}r_j\right) \right) \geq \frac{1}{M} \frac{\epsilon }{2}r_j, \quad \delta\left( A(q_j,r_j)\right) \geq \frac{r_j}{M}. 
			 \]
			Applying Harnack inequality in $\Omega$, we know there is a constant $C=C(\epsilon, |p|)$ such that 
			\[
			G\left( X_0,A\left(\widetilde p_j,\frac{\epsilon }{2}r_j \right) \right) \geq C G(X_0,A(q_j,r_j)). 
			\]
			Recalling that $A_j(p_j,\frac{\epsilon}{2}) = \left(A\left(\widetilde p_j,\frac{\epsilon }{2}r_j \right) - q_j  \right)/r_j $, we have
			\begin{align}\label{eq:tmplb}
				u_j\left(A_j\left(p_j,\frac{\epsilon}{2}\right)\right)  &= \dfrac{ r_j^{n-2}G\left( X_0,A\left(\widetilde p_j,\frac{\epsilon }{2}r_j \right)\right)}{\omega(B(q_j,r_j))} \\
				& \geq C \dfrac{ r_j^{n-2}G\left( X_0,A(q_j,r_j) \right)}{\omega(B(q_j,r_j))}  
				 = Cu_j(A_j(0,1))
				 \geq C' >0,  \nonumber
			\end{align}
			where the constant $C'$ is independent of $j$.
			However, since for $j$ large enough
			\begin{equation} \label{eqn104}
			A_j\left(p_j,\frac{\epsilon}{2}\right) \in B\left(p_j,\frac{3\epsilon}{4}\right) \subset B(p,\epsilon),
			 \end{equation}
			the lower bound \eqref{eq:tmplb} combined with \eqref{eqn104}
			implies that $u_\infty \not\equiv 0$ on $B(p,\epsilon)$ which contradicts our assumption. Therefore $p\in\pOinf$, and $\Lambda_\infty \subset \pOinf$.
			
			To show that $\pOinf \subset \Lambda_{\infty}$, we assume that $p\notin \Lambda_{\infty}$, thus since $\Lambda_{\infty}$ is a closed set, there exists $\epsilon>0$ such that $B(p,2\epsilon)\cap\Lambda_{\infty} = \emptyset$. Since $\Lambda_{\infty}$ is the Hausdorff limit of $\pOj$ we have that for $j$ large enough $B(p,\epsilon) \cap \pOj=\emptyset$. Hence either $B(p,\epsilon) \subset \Oj$ or $B(p,\epsilon) \subset \interior \Oj^c $. If $B(p,\epsilon) \subset \Oj$ then $B(q_j + p r_j, \epsilon r_j) \subset \Omega$. Hence $\delta(q_j+p r_j)  >\epsilon r_j$ and $|A(q_j,r_j) - (q_j + p r_j)| \leq r_j (1+|p|)$. Thus there exists a Harnack chain joining $A(q_j,r_j)$ to $\left(q_j+p r_j\right)$ of length independent of $j$ and depending on $\epsilon$ and $|p|$. By Harnack's inequality $G(X_0,q_j+p r_j) \sim G(X_0,A(q_j,r_j))$ which combined with  \eqref{eq:pjzj} yields
			\begin{equation}\label{eqn:3.5}
				u_j(p) \sim u_j\left( \frac{A(q_j,r_j)-q_j}{r_j} \right) \sim u_j(A_j(0,1)) \sim 1.
			\end{equation}
			Hence for $X\in B(p,\epsilon)\subset \Oj$, again by Harnack inequality and \eqref{eqn:3.5} we have $u_j(X) \sim u_j(p) \sim 1$ with constants independent of $j$. Letting $j\to\infty$ we have that $u_{\infty}(X) \sim 1$ for $X\in B(p,\epsilon/2)$. Thus $B(p,\epsilon/2) \subset \Oinf = \{u_{\infty} >0\}$ and $p\notin\partial\Oinf$. If $B(p,\epsilon) \subset \interior \Oj^c$, then $u_j(X) = 0$ for all $X\in B(p,\epsilon)$. By uniform convergence of $u_j$ in $B(p,\epsilon/2)$ we have that $u_{\infty}(X) = 0$ for $X\in B(p,\epsilon/2)$, which implies $B(p,\epsilon/2) \subset \{u_{\infty}=0\}$ and $p\notin \partial\Oinf$. Hence $\partial\Oinf \subset\Lambda_{\infty}$ and we conclude $\Lambda_{\infty} = \partial\Oinf$.		
			

			We now show that $\Gamma_\infty=\overline\Omega_\infty$. Note that if $Z\in\Omega_\infty$, $u_\infty(Z)>0$ and for $j$ large enough $u_j(Z)>0$ also. Thus $Z\in\Omega_j$ 
			for all $j$ large enough and $Z\in \Gamma_\infty$, which yields $\Omega_\infty\subset \Gamma_\infty$. Since $\Gamma_{\infty}$ is closed we conclude $\overline\Omega_\infty\subset \Gamma_\infty$. Let $X\in \Gamma_\infty$. Assume there is $\epsilon>0$ such that $\overline{B(X,2\epsilon)}\subset \Omega_\infty^c$, in particular $B(X,2\epsilon) \cap \partial\Oinf = \emptyset$. Since $\partial\Oinf$ is the limit of $\pOj$'s, for $j$ large enough $B(X,\epsilon) \cap \pOj = \emptyset$. By the definition of $\Gamma_{\infty}$, there is a sequence $X_j\in \overline\Oj $ converging to $X$. Thus for $j$ large enough $ B(X,\epsilon)$ is a neighborhood of $X_j$ and moreover $B(X,\epsilon) \cap\Omega_j \neq \emptyset$ since $X_j\in \overline{\Omega_j}$. Since $B(X,\epsilon) \cap \pOj = \emptyset$
			we conclude that $B(X,\epsilon)\subset \Omega_j$. Using a similar argument to the one used to obtain \eqref{eqn:3.5} we have
			\[
				u_j(X) \geq C(|X|,\epsilon) u_j(A_j(0,1)) \geq C'>0
			\]
			independent of $j$. Hence $u_{\infty}(X) = \lim u_j(X) \geq C'>0$ and $X\in\Oinf$, contradicting the assumption that $X\in \text{int}\,\Omega_\infty^c$. Therefore $X\in\overline\Omega_\infty$, that is $\Gamma_\infty\subset \overline\Omega_\infty$, which concludes the proof of \emph{(2)}.
\end{myproof}
			
\begin{myproof}[Proof of (\ref{im:uniform}) in Theorem \ref{thm:pseudo-blow-geo}]		
		Recall that since $\Omega$ is a uniform domain there is $M>1$ such that for all $q\in \partial\Omega$ and $r\in (0,\diam \Omega)$ there is a point $A(q,r)\in \Omega$ such that
		\begin{equation}\label{eqn106}
B\left(A, \frac{r}{M} \right) \subset B(q,r)\cap\Omega.
	\end{equation} 
Note that since each $\Omega_j$ is a dilation and translation of $\Omega$ \eqref{eqn106} also holds for $q'\in\pOj$ and $r \in (0,\diam \Omega_j)$. 	
			 
		Let $p\in\pOinf$ and $r>0$. Since $\pOj\to\pOinf$, we can find $p_j\in \pOj$ such that $p_j\to p$. For each $j$ there exists $ A_j=A_j(p_j,r/2)$ such that 
	\[
		B\left(A_j, \frac{r}{2M}\right) \subset B\left(p_j, \frac{r}{2}\right) \cap \Oj.
	\]
Note that for $j$ large enough 	
\[
B\left(A_j, \frac{r}{2M}\right) \subset B\left( p_j, \frac{r}{2} \right) \subset \overline{B\left(p,\frac{3r}{4}\right)}. 
\]
Modulo passing to a subsequence (which we relabel)  we can find a point $A(p,r)$ such that $A_j\to A(p,r)$ and for $j$ large enough
\begin{equation} \label{eqn109}
B\left(A(p,r), \frac{r}{3M} \right)\subset B\left(A_j, \frac{r}{2M}\right)  \subset B(p,r) \cap \Oj.
 \end{equation}
 Let $Y\in B\left(A(p,r), \frac{r}{3M} \right)$. By \eqref{eqn109} $u_j(Y)\sim u_j(A_j)$. Since each $\Omega_j$ satisfies the Harnack chain property with the same constant as $\Omega$
 we have that $u_j(A_j)\sim u_j(A_j(0,1))$ with a comparison constant that only depends on $r$ and $|p_j|$ thus for $j$ large enough with a comparison constant that only depends on $r$ and $|p|$. Since
$ u_j(A_j(0,1))\sim 1$, we conclude that $u_j(Y)\sim 1$ with a comparison constant that only depends on $r$ and $|p|$. Hence $u_\infty(Y)>0$ and 
\begin{equation}\label{eq:NTOinf}
		B\left(A(p,r), \frac{r}{3M} \right) \subset B(p,r)\cap\Oinf,  
	\end{equation}
	 		which ensures that $\Omega_\infty$ satisfies the corkscrew condition.

	Fix $X,Y\in \Oinf$. 
	Since $\pOj \to \pOinf$  and $\overline\Oj\to\overline\Omega_\infty$, for $j$ large enough
	\begin{equation}\label{eq:temp109}
	    D[\pOj, \pOinf]\text{ and } D[\overline\Oj, \overline\Oinf] \leq  \frac{d}{2} \min\left\{ \delta_{\infty}(X), \delta_{\infty}(Y) \right\},
	\end{equation}
	here $d \leq 1$ is a constant dependeing on $X, Y$ to be determined later.
	Fix an $j$ sufficiently large, we have $X,Y\in \Oj$ and 
	\[
	 \frac{\delta_{\infty}(X)}{2} \leq \delta_j(X) \leq \frac{3\delta_{\infty}(X)}{2},\quad \frac{\delta_{\infty}(Y)}{2} \leq \delta_j(Y) \leq \frac{3\delta_{\infty}(Y)}{2}. 
	 \]
	Since $\Oj$ satisfies the Harnack chain property with the same constants as $\Omega$, there are constants $c_1<c_2<1$ (independent of $j$) and balls $B_1,\cdots, B_K$ (the choice of balls are dependent of $j$) connecting $X$ to $Y$ in $\Oj$ and such that
	\begin{equation} \label{eq:HBj}
		c_1 \delta_j(B_k) \leq \diam B_k \leq c_2 \delta_j(B_k),  
	\end{equation} 
	for $k=1,2,\cdots,K$ and
	\begin{equation}\label{eqn111}
	 K \leq C\left( \frac{|X-Y|}{\min \{ \delta_j(X), \delta_j(Y) \}} \right) \leq C' \left( \frac{|X-Y|}{\min\{ \delta_{\infty}(X), \delta_{\infty}(Y)\}} \right).
	  \end{equation}
	  Combining \eqref{eq:HBj} and \eqref{eqn111}, we know
	  \begin{equation}\label{eq:temp111}
	      \diam B_k \geq c(c_1,c_2,K) \min\{ \delta_j(X), \delta_j(Y) \} \geq c'(c_1,c_2,K) \min\{ \delta_{\infty}(X), \delta_{\infty}(Y) \} .
	  \end{equation}
	  Combining \eqref{eq:temp111} again with \eqref{eq:HBj}, we find a constant $d=d(c_1,c_2, K)\leq 1$ such that 
	  \[
	      \delta_j(B_k) \geq d \min\{\delta_{\infty}(X),\delta_{\infty}(Y) \} \quad \text{ for all } k = 1,2,\cdots,K.
	  \]
	  Recall \eqref{eq:temp109}, we conclude
	  \[
	      |\delta_j(B_k) - \delta_{\infty}(B_k)| \leq D[\pOj,\pOinf] \leq \frac{d}{2} \min\{\delta_{\infty}(X),\delta_{\infty}(Y) \} \leq \frac{\delta_j(B_k)}{2}.
	  \]
	  Thus $B_k \subset \Oinf$, and moreover,
	  \[
	      \frac{2 \delta_{\infty}(B_k)}{3} \leq \delta_j(B_k) \leq 2 \delta_{\infty}(B_k).
	  \]
	  Combined with \eqref{eq:HBj} we get
	  \begin{equation} \label{eq:HBj-infty}
		    \frac{2}{3} c_1 \delta_\infty(B_k) \leq \diam B_k \leq 2c_2 \delta_\infty(B_k). 
	\end{equation}
    To summarize, we find balls $B_1, \cdots, B_K$ in $\Oinf$ that satisfy \eqref{eq:HBj-infty} and connect $X$ to $Y$, and the number of balls satisfies \eqref{eqn111}. Therefore $\Omega_\infty$ satisfies the Harnack chain condition. This combined with \eqref{eq:NTOinf} shows that $\Oinf$ is a uniform domain with constants comparable to those of $\Omega$.
\end{myproof}
		 
\begin{myproof}[Proof of (\ref{im:hmcv}) and (\ref{im:smcv}) in Theorem \ref{thm:pseudo-blow-geo}]
	As noted right after \eqref{eqn:blow-double} and \eqref{eqn:blow-sigma}, $\{\sj\}$ and $\{\oj\}$ satisfy conditions $i)$ and $ii)$ of Lemma \ref{lm:sptcv}. Moreover for $R>0$,
\[ \sup_j \sigma_j(B(0,R)) = \sup_j \frac{\sigma(B(q_j,Rr_j))}{r_j^{n-1}} \leq CR^{n-1} \]
since $\sigma$ is Ahlfors regular; and 
\[ \sup_j \omega_j(B(0,R)) = \sup_j \frac{\omega(B(q_j,Rr_j))}{\omega(B(q_j,r_j))} \leq C(R) \] 
since $\omega$ is doubling. Therefore modulo passing to a subsequence (which we relabel)
  we have
\[
 \sj \rightharpoonup \sinf, \quad \oj \rightharpoonup \oinf. 
\]
where $\mu_\infty$ and $\oinf$ are Radon measures.
By Lemma \ref{lm:sptcv}, $\mu_\infty$ and $\omega_\infty$ are doubling measures and
\begin{equation} \label{eqn113}
\spt\sj \to \spt\sinf, \quad \spt\oj \to \spt\oinf. 
\end{equation}
Since $\sj$ is Ahlfors regular, it is clear that $\spt\sj = \pOj$; by the doubling property of $\oj$ and that $\oj(B(0,1)) = 1$ we also know $\spt\oj = \pOj$. Recall $\pOj \to \pOinf $, \eqref{eqn113} yields
\[
 \spt\sinf = \spt\oinf = \pOinf. 
\]	
 To show that $\sinf$ is Ahlfors regular let $q\in\partial\Omega_\infty$ and let $q_j\in\pOj$ such that $q_j\to q$. For $r>0$ and $j$ sufficiently large
 	\begin{eqnarray*} \label{eq:lowerAR}
		\mu_\infty(B(q,r)) &\geq &\mu_\infty\left(\overline{B\left(q,\frac{r}{2}\right)}\right)  \geq \limsup \sj\left(\overline{B\left(q,\frac{r}{2}\right)}\right) \nonumber \\
		& \geq& \limsup \sj\left(B\left(q_j,\frac{r}{4}\right) \right) 
		 \geq Cr^{n-1}; 
	\end{eqnarray*} 
	and
	\begin{equation}\label{eq:upperAR}
		\mu_\infty(B(q,r)) \leq \liminf \sj(B(q,r)) \leq \liminf\sj(B(q_j,2r)) \leq C'r^{n-1}.
	\end{equation}
	Note that \eqref{eq:lowerAR} and \eqref{eq:upperAR} guarantee that $\minf$ is Ahlfors regular. Moreover by Theorem 6.9 of \cite{Ma}, there are constants $C_1$ and $C_2$ such that
	\[
	C_1 \minf \leq \mathcal{H}^{n-1}\res{\pOinf} \leq C_2 \minf. 
	\]
\end{myproof}

\begin{myproof}[Proof of Theorem \ref{thm:pseudo-blow-ana}]		
	Let $\psi\in C^\infty_c(\R^n)$.
	Suppose $\spt\psi \subset B(0,R)$ for some large $R>0$. Let $j$ be large enough, so that the pole $X_0 \notin B(q_j,4r_jR)$. Define
	$ \varphi_j(Z) = \psi\left( \frac{Z-q_j}{r_j}\right) $ and note that $\spt \varphi_j\subset B(q_j, r_jR)$ thus $X_0\notin \spt\varphi_j$.
	Using Proposition \ref{representation} as well as a change of variables we have
	\begin{eqnarray*}\label{eqn116}
	 -\int_{\Omega_j }A(q_j+r_jZ)\nabla u_j\cdot\nabla \psi\, dZ	 &=&
	 -\int_\Omega A(X)\frac{r_j^{n-2}} {\omega(B(q_j, r_j))}r_j\nabla G(X_0,X) \cdot r_j\nabla \varphi_j r_j^{-n} dX \nonumber\\
	 &=&  -\frac{1} {\omega(B(q_j, r_j))}\int_\Omega A(X) \nabla G(X_0,X) \cdot \nabla \varphi_j \, dX\nonumber\\
	&=&\frac{1} {\omega(B(q_j, r_j))} \int_{\pO} \varphi_j d\omega^{X_0}\nonumber\\
	&=& \int_{\pOj}\psi\ d\omega_j.
	 \end{eqnarray*}
	  For $Z\in B(0,R)$, we have $	|q-(q_j+r_j Z)| \leq |q-q_j|+r_j R $, since $q_j\to q$, $r_j\to 0$ then
	  $\lim_{j\to\infty} (q_j +r_j Z) = q$. Therefore since  $A\in C(\overline\Omega)$, we have $A(q_j+r_jZ)\to A(q_\infty)$ uniformly on $B(0,R)$.
	 By (1), (2) and (4) in Theorem \ref{thm:pseudo-blow-geo}, $\nabla u_j \rightharpoonup \nabla u_{\infty}$ in $L^2_{loc}(\RR^n)$, $\overline\Oj =\overline{\{u_{j} > 0 \}}\to \overline\Oinf = \overline{\{u_{\infty} > 0 \}}$, and $\partial\Oj \to \partial\Oinf $ in the Hausdorff distance sense. Moreover $\oj \rightharpoonup \oinf$ with $\spt\oj \to \spt \oinf = \partial\Oinf$. Hence letting $j\to\infty$ in \eqref{eqn116} we obtain \eqref{limit-int}.	 
\end{myproof}

\begin{myproof}[Proof of Theorem \ref{thm:AW11}]
Recall we proved in Lemma \ref{A-vanishing-osc} that for $\mathcal{H}^{n-1}$ a.e. $q\in\pO$ there exists $A^*(q)$ a symmetric constant-coefficient elliptic matrix such that \eqref{eqn:v-osc} holds.
For such $q$ consider the blow-up given by Theorem \ref{thm:pseudo-blow-geo} where $q_j=q$ for all $j$. As in the proof of Theorem \ref{thm:pseudo-blow-ana} we have for $\psi\in C^\infty_c(\R^n)$ and $j$ large enough,
	\begin{equation}\label{eqn:3.6}
		-\int_{\Oj} A(q+r_j Z) \nabla u_j \cdot\nabla \psi dZ = \int_{\pOj} \psi d\oj.
	\end{equation}
	Note that as in the proof of Theorem \ref{thm:pseudo-blow-ana}, the right hand side
	\begin{equation}\label{eqn:3.7}
		 \int_{\pOj} \psi d\oj \to  \int_{\pOinf} \psi d\oinf \quad \text{as} \quad j\to\infty.
	\end{equation}
	Recall that for $R>0$, $\sup_{j} \|u_j\|_{W^{1,2}(B(0,R))} \leq C_R <\infty$ by \eqref{eqn:3.4}, $u_j\to u_{\infty}$ in $L^2_{loc}(\RR^n)$ and $\nabla u_j \rightharpoonup \nabla u_{\infty}$ in $L^2_{loc}(\RR^n)$. Thus for $j$ large enough since $\psi \in C_c^{\infty}(\RR^n)$, by triangle inequality and H\"older inequality we have
	\begin{align}
		& \left| \int_{\Oj} \langle A(q_j+r_j Z) \nabla u_j, \nabla \psi \rangle dZ - \int_{\Oinf} \langle A^*(q) \nabla u_{\infty}, \nabla \psi \rangle dZ \right|  \nonumber \\
		& \qquad \leq \left| \int_{\Oj} \left\langle \left( A(q_j+r_j Z)-A^*(q) \right) \nabla u_j, \nabla \psi \right\rangle dZ \right| \nonumber \\
		& \qquad \qquad \qquad + \left| \int_{\Oj} \langle A^*(q) \nabla u_j, \nabla \psi \rangle dZ - \int_{\Oinf} \langle A^*(q) \nabla u_{\infty}, \nabla \psi \rangle dZ \right|  \nonumber \\
		& \qquad \leq \|\nabla \psi\|_{L^{\infty}} \left( \int_{\Oj\cap B(0,R)} |A(q+r_j Z) - A^*(q) |^2 dZ \right)^{\frac{1}{2}} \left( \int_{\Oj\cap B(0,R)} |\nabla u_j|^2 dZ \right)^{\frac{1}{2}} \nonumber \\
		& \qquad \qquad \qquad + \left| \int_{\Oj} \langle A^*(q) \nabla u_{j}, \nabla \psi \rangle dZ - \int_{\Oinf} \langle A^*(q) \nabla u_{\infty}, \nabla \psi \rangle dZ \right|. \label{eqn:3.8}
	\end{align}
	Note that since $A^*(q)$ is a constant-coefficient matrix, $\nabla u_j \rightharpoonup \nabla u_{\infty}$ in $L^2_{loc}(\RR^n)$ implies $A^*(q) \nabla u_j \rightharpoonup A^*(q) \nabla u_{\infty}$ in $L^2_{loc}(\RR^n)$. Thus since $\overline\Oj=\overline{\{u_j > 0 \}} \to \overline\Oinf=\overline{\{u_{\infty} > 0 \}}$
	\begin{equation}\label{eqn:3.9}
		\lim_{j\to\infty} \int_{\Oj} \langle A^*(q) \nabla u_j, \nabla \psi\rangle dZ = \int_{\Oinf} \langle A^*(q) \nabla u_{\infty} dZ, \nabla \psi \rangle dZ.
	\end{equation}
	On the other hand since $A\in L^{\infty}(\Omega)$, and by construction $|A^*(q)| \leq C\|A\|_{L^{\infty}(\Omega)}$, we have that 
	\begin{align}
		\left( \int_{\Oj\cap B(0,R)} |A(q+r_j Z) - A^*(q) |^2 dZ \right)^{\frac{1}{2}} & \leq \left(\frac{1}{r_j^n} \int_{\Omega \cap B(q,r_j R)} |A - A^*(q) |^2 dX \right)^{\frac{1}{2}} \nonumber \\
		& \leq C\|A\|_{L^{\infty}(\Omega)} ^{2-\frac{n}{n-1}} \left( \fint_{\Omega \cap B(q,r_j R)} |A-A^*(q)|^{\frac{n}{n-1}} dX \right)^{\frac{1}{2}}. \label{eqn:3.10}
	\end{align}
	Hence by combining \eqref{eqn:v-osc}, \eqref{eqn:3.8}, \eqref{eqn:3.9} and \eqref{eqn:3.10} we obtain
	\begin{align}
		& \lim_{j\to\infty} \left| \int_{\Oj} \langle A(q_j+r_j Z) \nabla u_j, \nabla \psi \rangle dZ - \int_{\Oinf} \langle A^*(q) \nabla u_{\infty}, \nabla \psi \rangle dZ \right| \nonumber \\
		& \qquad \leq C \|\nabla \psi \|_{L^{\infty}} \sup_j \|\nabla u_j\|_{L^2(B(0,R))} \|A\|_{L^{\infty}(\Omega)} ^{2-\frac{n}{n-1}} \cdot \lim_{j\to\infty} \left( \fint_{\Omega \cap B(q,r_j R)} |A-A^*(q)|^{\frac{n}{n-1}} dX \right)^{\frac{1}{2}} \nonumber \\
		& \qquad \qquad + \lim_{j\to\infty} \left| \int_{\Oj} \langle A^*(q) \nabla u_{j}, \nabla \psi \rangle dZ - \int_{\Oinf} \langle A^*(q) \nabla u_{\infty}, \nabla \psi \rangle dZ \right| =0. \label{eqn:3.11} 
	\end{align}
Thus combining \eqref{eqn:3.7} and \eqref{eqn:3.11}, we conclude the proof of \eqref{limit-int2} and Theorem \ref{thm:AW11}.
\end{myproof}

\section{Analytic properties of the blow-up and pseudo blow-up domains}
As mentioned in section \ref{sect:blowup}, in order to apply Theorem \ref{thm:hmu} we need to study the elliptic measures of the blow-up domain with finite poles. In this section we construct these measures by a limiting procedure which is compatible with the blow-up procedure used to produce the tangent and pseudo-tangent domains.

\begin{theorem}\label{thm:blow-ana-pole}
	Let $\Omega\subset\RR^n$ be a uniform domain with Ahlfors regular boundary. Let $L=-\divg(A\nabla)$ with $A\in C(\overline\Omega)$ or $A\in W^{1,1}(\Omega)\cap L^{\infty}(\Omega)$. 
	Suppose that the elliptic measure $\omega \in A_{\infty}(\sigma)$ in the sense of \cite{HMU}.
	Assume that $\Oinf$ is either the pseudo-tangent or the tangent domain obtained in Theorem \ref{thm:pseudo-blow-geo}, where in the case of $A\in W^{1,1}(\Omega) \cap L^{\infty}(\Omega)$ we use $q_j = q$ for every $j$ and only consider points $q$ satisfying \eqref{eqn:v-osc} and $L_{\infty}$ is the corresponding operator as in Theorem \ref{thm:pseudo-blow-ana} or \ref{thm:AW11}, then $\omega_{L_{\infty}} = \omega_{\infty} \in A_{\infty}(\sigma_\infty)$ in the sense of \cite{HMU} (see Definition \ref{def:AinftyHMU}).
\end{theorem}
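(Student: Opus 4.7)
The main challenge is that the hypothesis $\omega_L \in A_\infty(\sigma)$ (Definition \ref{def:AinftyHMU}) concerns elliptic measures with poles at \emph{corkscrew} points, whereas the measure $\omega_\infty$ produced in Theorem \ref{thm:pseudo-blow-geo} is the blow-up of an elliptic measure with a fixed pole $X_0$, so it corresponds morally to a pole ``at infinity'' in $\Omega_\infty$. My plan is therefore first to construct, for every interior point $Z\in\Omega_\infty$, an elliptic measure $\omega_{L_\infty}^Z$ for the blow-up operator on the blow-up domain via a blow-up procedure compatible with the one already performed, and then to transfer the scale-invariant $A_\infty$ inequality from $\Omega$ to $\Omega_\infty$.

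For the construction step I would fix $Z\in\Omega_\infty$ and note that, since $\overline\Omega_j \to \overline\Omega_\infty$ in Hausdorff distance, for $j$ large $Z\in\Omega_j$ with $\delta_j(Z)\sim\delta_\infty(Z)$. Setting $\widetilde Z_j := q_j + r_j Z \in \Omega$, I would define $\widetilde\omega_j^Z$ as the pushforward of $\omega^{\widetilde Z_j}$ under $X\mapsto (X-q_j)/r_j$, a probability measure on $\partial\Omega_j$. The uniform doubling of $\omega$ (Lemma \ref{doubling}) is inherited by the rescalings, so Lemma \ref{lm:sptcv} produces a weak subsequential limit $\omega_{L_\infty}^Z$ supported on $\partial\Omega_\infty$. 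To identify this limit as the $L_\infty$-elliptic measure with pole $Z$, I would introduce the rescaled Green functions $v_j(Y) := r_j^{n-2} G(\widetilde Z_j, q_j+r_j Y)$ and argue, using Lemmas \ref{lem:vanishing}--\ref{doubling} together with De Giorgi--Moser estimates, that they are uniformly bounded and uniformly H\"older continuous on compact subsets of $\RR^n\setminus\{Z\}$ with a uniform $W^{1,2}_{loc}$ bound. This yields $v_j\to v_\infty$ locally uniformly and $\nabla v_j \rightharpoonup \nabla v_\infty$ in $L^2_{loc}$, with $v_\infty$ the $L_\infty$-Green function of $\Omega_\infty$ with pole $Z$. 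Passing Proposition \ref{representation} to the limit exactly as in the proofs of Theorems \ref{thm:pseudo-blow-ana} and \ref{thm:AW11} (where in the $W^{1,1}$ case the vanishing oscillation \eqref{eqn:v-osc} is what enables the replacement of $A(q+r_j Y)$ by $A^*(q)$ inside the integrand) would then identify $\omega_{L_\infty}^Z$ as the $L_\infty$-elliptic measure at $Z$.

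For the transfer of $A_\infty$, I would fix $q\in\partial\Omega_\infty$, $r\in(0,\diam\Omega_\infty)$, a surface ball $\Delta'\subset \Delta(q,r)$, and a Borel set $E\subset \Delta'$. Choosing $q_j\in\partial\Omega_j$ with $q_j\to q$ (Theorem \ref{thm:pseudo-blow-geo}(2)) and corkscrew points $A_j := A_j(q_j,r)\in\Omega_j$ that converge along a subsequence to a corkscrew point $A\in\Omega_\infty$ for $(q,r)$ (following the proof of Theorem \ref{thm:pseudo-blow-geo}(3)), I would exploit the crucial fact that $\Omega_j$ is merely a translation and dilation of $\Omega$, so $\omega_L\in A_\infty(\sigma)$ descends to $\omega_{L_j}\in A_\infty(\sigma_j)$ with the \emph{same} constants $C,\theta$. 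Outer regularity supplies, for each $\epsilon>0$, a relatively open set $U\supset E$ in $\partial\Omega_\infty$ with $\sigma_\infty(U\setminus E)<\epsilon$, together with Hausdorff-approximating open sets $U_j\subset\partial\Omega_j$. The Portmanteau theorem, the Ahlfors regularity of $\sigma_j$ (ensuring the limits do not charge boundaries of generic surface balls, so that $\sigma_j(\Delta'_j)\to\sigma_\infty(\Delta')$ and $\omega_{L_j}^{A_j}(\Delta'_j)\to \omega_{L_\infty}^A(\Delta')$), and the construction step applied with $Z=A$, then let me pass to the limit in the $A_\infty$ inequality on $\Omega_j$ and obtain
\[
\omega_{L_\infty}^A(E) \leq \omega_{L_\infty}^A(U) \leq C\left(\frac{\sigma_\infty(U)}{\sigma_\infty(\Delta')}\right)^{\theta} \omega_{L_\infty}^A(\Delta');
\]
letting $\epsilon\to 0$ then gives $\omega_{L_\infty}\in A_\infty(\sigma_\infty)$.

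The hard part will be the coordination of the three simultaneous weak limits -- the rescaled elliptic measures with a moving pole $A_j\to A$, the rescaled surface measures, and the Hausdorff limits of the boundaries together with the surface balls sitting inside them -- along a \emph{single} common subsequence, and the promotion of weak convergence to honest convergence on surface balls and on arbitrary Borel subsets via inner/outer regularity. The feasibility of this upgrade rests in an essential way on the uniform doubling of the $\omega_{L_j}$ and the uniform Ahlfors regularity of the $\sigma_j$, both of which transfer to $\Omega_j$ by scale invariance of the relevant constants.
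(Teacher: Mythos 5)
Your overall two-step strategy -- first construct $L_\infty$-elliptic measures $\omega_{L_\infty}^Z$ at finite poles on $\Omega_\infty$ by a limit compatible with the blow-up, then pass the scale-invariant $A_\infty$ inequality through the blow-up -- matches the paper. The $A_\infty$-transfer step in your proposal is essentially the same as the paper's: the scale-invariance of the $A_\infty$ constants on $\Omega_j$, the uniform doubling of the $\omega_j$, and the uniform Ahlfors regularity of the $\sigma_j$ are all used there too. (The paper passes to the limit with the open-set/compact-set inequalities from Theorem 1.24 of \cite{Ma} combined with uniform doubling of $\omega_j^{A(p,r)}$, then upgrades from open sets to Borel sets by inner/outer regularity, rather than arguing that generic spheres are not charged -- a cosmetic difference.)

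Where you diverge is in the construction of $\omega_{L_\infty}^Z$, and there is a genuine gap in your route. You propose blowing up the Green functions $v_j(Y) = r_j^{n-2} G(\widetilde Z_j, q_j + r_j Y)$ and passing Proposition \ref{representation} to the limit to get a representation identity for the pair $(v_\infty, \omega_\infty^Z)$. But $\Omega_\infty$ is unbounded, and on an unbounded uniform domain neither the Green function with finite pole nor the bounded solution of the Dirichlet problem with compactly supported data is automatically unique; the representation identity alone therefore does not identify $\omega_\infty^Z$ as \emph{the} $L_\infty$-elliptic measure, nor $v_\infty$ as \emph{the} Green function -- both could, a priori, differ from the canonical objects by a nonnegative $L_\infty$-harmonic function vanishing on $\partial\Omega_\infty$. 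The assertion ``with $v_\infty$ the $L_\infty$-Green function of $\Omega_\infty$ with pole $Z$'' is thus unjustified. The paper resolves this by working instead with the Dirichlet solutions $v_j$ of $L_j v_j = 0$ in $\Omega_j$, $v_j = f$ on $\partial\Omega_j$ for $f\in C_c\cap\Lip\cap W^{1,2}$, and then showing any subsequential limit $v$ coincides with the canonical solution $u$ built in \cite{HM1}: one gets $u\le v$ by maximum principle on the truncations $\Omega_\infty\cap B(0,2R)$, and then $w=v-u\ge 0$ is forced to vanish by applying the boundary decay of Lemma \ref{lem:vanishing} at arbitrarily large scales $r$, as in \eqref{eqn:Holderinfty}. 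This identification is essential, and it simultaneously yields convergence of the \emph{entire} sequence $v_j$, which also dissolves the coordination problem you flag at the end: the measures $\omega_\infty^Z$ produced via Hahn--Banach and Riesz representation are then independent of any further subsequence, hence automatically compatible with the blow-up subsequence already fixed in Theorem \ref{thm:pseudo-blow-geo}. Without that identification your subsequential weak limits of $\widetilde\omega_j^Z$ for different poles $Z$ might require different subsequences, and the transfer step would not close. A small additional slip: Lemma \ref{lm:sptcv} does not \emph{produce} a weak subsequential limit of $\widetilde\omega_j^Z$ -- that is supplied by compactness of probability measures -- it only gives convergence of supports once a weak limit is assumed.
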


\begin{proof}
	Our goal is to show that the elliptic measure of $L_{\infty}$ with finite pole can be recovered as a limit of the elliptic measures of $L_j = -\divg(A_j(Z)\nabla)$ with finite pole, where $A_j(Z) = A(q_j + r_j Z)$ in $\Oj$, 
	and that the $A_{\infty}$ property of elliptic measures is preserved when passing to a limit.
	
	Let $f\in C_c(\RR^n) \cap \Lip(\R^n) \cap W^{1,2}(\RR^n)$, and consider the Dirichlet problem
	\begin{equation}\label{eqn:101A}
		\left\{\begin{array}{rcll}
			L_j v_j & = & 0, & \text{in } \Oj \\
			v_j & = & f, & \text{on } \pOj
		\end{array} \right.
	\end{equation}
	then for $Z \in \Oj$
	\begin{equation}\label{eqn:102A}
		v_j(Z) = \int_{\pOj} f(q) d\omega_j^Z(q).
	\end{equation}
	Here $\omega_j^Z$ is the harmonic measure of $L_j$ in $\Oj$ with pole $Z$. 
	By definitions of $\Oj$ and $ L_j$ it is not hard to see 
	\[
		\omega_j^Z(E) = \omega^{q_j + r_j Z}(q_j + r_j E), \qquad \text{for } E\subset \pOj.
	\]
	
	By the maximum principle 
	\[ \sup_{\Oj} |v_j| \leq \|f\|_{L^{\infty}(\pOj)} \leq \|f\|_{L^{\infty}(\RR^n)} . \]
	Since the domains $\Oj$ have Ahlfors regular boundaries with the same constants, DeGiorgi-Nash-Moser and the assumption that the boundary data $f$ is Lipschitz yield that the solutions $\{v_j\}$ are equicontinuous on compact sets of $\RR^n$.
	Thus the sequence $\{v_j\}$ is equicontinuous and uniformly bounded.
	Furthermore using the variational properties of $v_j$ we know that
	\[
		\int_{\Oj} \langle A_j(Z) \nabla v_j, \nabla v_j \rangle \leq \int_{\Oj} \langle A_j(Z) \nabla f, \nabla f \rangle.
	\]
	The uniform ellipticity of $L_j$ yields
	\[
		\lambda \int_{\Oj} |\nabla v_j|^2 \leq \Lambda \int_{\Oj} |\nabla f|^2. 
	\]
	Extending $v_j = f$ on $\Oj^c$ we have that
	\[ \sup_j \|\nabla v_j \|_{L^2(\RR^n)} \leq \left(\frac{\Lambda}{\lambda}\right)^{\frac{1}{2}} \|\nabla f\|_{L^2(\RR^n)}, \quad \text{and } \sup_j \|v_j\|_{L^2(B(0,R))} \leq C_R. \]
	Modulo passing to a subsequence (which we relabel) we have that there is a continuous function $v\in W^{1,2}_{loc}(\RR^n)$ with $\nabla v \in L^2(\RR^n)$ and such that $v_{j} \to u$ uniformly on compact sets of $\RR^n$ and $\nabla v_{j} \rightharpoonup \nabla v$ in $L^2(\RR^n)$. Note that a priori the choice of a subsequence could depend on the boundary data $f$, which will be problematic. We will show later that this is not the case.
	
	We claim that the function $v$ solves the Dirichlet problem
	\begin{equation}\label{D:Oinf}
		\left\{ \begin{array}{ll}
			L_\infty v = 0, & \text{in }\Oinf \\
			v= f, & \text{on } \pOinf.
		\end{array} \right.
	\end{equation} 
	Note that for $p\in \pOinf$ there exist $p_j \in \pOj$ with $p_j \to p$. Using the continuity of $v$ and $f$ at $p$, the uniform convergence of $v_{j}$ to $v$ on compact sets (for example on $\overline{B(p,r)}$) and the fact that $v_{j}=f$ on $\pOj$, we have
	\begin{align}
		|v(p) - f(p)| & \leq |v(p) - v(p_{j})| + |v(p_j) - v_j(p_j)| + |v_j(p_j) - f(p_j)| + |f(p_j) - f(p)| \nonumber \\
		& \leq |v(p) - v(p_j)| + \|v-v_j\|_{L^{\infty}(\overline{B(p,r)})} + 0 + |f(p_j) - f(p)|. \label{eqn:105A}
	\end{align}
	
	Letting $j\to \infty$ in \eqref{eqn:105A} yields $v(p) = f(p)$. Combined with the continuity of $v$ in $\RR^n$ we conclude that $u$ tends to $f$ continuously towards the boundary $\pOinf$.
	Let $\xi \in C_c^{\infty}(\Oinf)$. Since $\Oj$ and $\Oinf$ are open domains satisfying $\overline{\Oj} \to \overline{\Oinf}$ and $\pOj \to \pOinf$ uniformly on compact sets, a standard argument shows that a compact set contained in $\Oinf$ is eventually contained in $\Oj$. Thus $\xi\in C_c^{\infty}(\Oinf)$ implies $\xi \in C_c^{\infty}(\Oj)$ for $j$ sufficiently large. By \eqref{eqn:101A} we have that
	\begin{equation}\label{eqn:106A}
		\int \langle A(q_j+r_j Z) \nabla v_j,\nabla \xi\rangle dZ = 0.
	\end{equation}
	Letting $j\to \infty$ in \eqref{eqn:106A} and proceeding as in the proofs of Theorem \ref{thm:pseudo-blow-ana} and \ref{thm:AW11}, we conclude that 
	\[
		\int \langle A^*(q) \nabla v, \nabla \xi \rangle dZ = 0,
	\]
	where $A^*(q) = A(q)$ when $A\in C(\overline\Omega)$ and $A^*(q)$ is as in Theorem \ref{thm:AW11} in the case when $A\in W^{1,1}(\Omega) \cap L^{\infty}(\Omega)$. Thus in either case we have $L_\infty v=0$ in $\Omega_\infty$. 
	Since the tangent domain $\Oinf$ is unbounded, the solution to the Dirichlet problem \eqref{D:Oinf} may not be unique. It may not even satisfy the maximum principle. We need more work to show the function $v$ we just constructed is indeed a solution we want, in particular it is indeed the solution that gives rise to the elliptic measure of $\Oinf$.
	
	Suppose that $f$ is compactly supported in $B(0,R_0)$. Given $\epsilon>0$
	there is $j_{\epsilon, R_0}\in \NN$ such that for $j\geq j_{\epsilon, R_0}$, the Hausdorff distance between $\partial\Omega_i\cap \overline{B(0,R_0)}$ and $\partial \Omega_\infty\cap \overline{B(0,R_0)}$ is small enough so that any $p_j \in \pOj \cap \overline{B(0,R_0)}$, there is $p\in \pOinf$ such that since $f$ is uniformly continuous on  $\overline{B(0,R)}$, $|f(p) -f(p_j)|<\epsilon$. Hence 
	\begin{equation}\label{eqn:106C}
	\sup_{\pOj} |f| = \sup_{\pOj \cap \overline{B(0,R)}} |f| \leq \sup_{\pOinf \cap \overline{ B(0,2R)}} |f| + \epsilon = \sup_{\pOinf} |f| + \epsilon.  
	\end{equation}
	For $Z\in \overline{\Oinf}$ there exists a sequence $Z_j \in \overline{\Oj}$ such that $Z_j \to Z$ and all lie in $\overline{B(0,M R_0)}$ for $M$ large enough.
	Since $\sup_{\Oj} |v_j| \leq \sup_{\pOj} |f|$, $v_j \to v$ and $\overline{\Oj} \to \overline{\Oinf}$ uniformly on compact sets. 
	For $\epsilon >0$ there is $j'_{\epsilon, R_0, M}\in \NN$ such that for $j\geq j'_{\epsilon, R_0, M}$, using \eqref{eqn:106C} we have
	\begin{equation}\label{eqn:106D}
		|v(Z)| \leq |v(Z) - v(Z_j)| + |v(Z_j) - v_j(Z_j)| + |v_j(Z_j)| \leq 2\epsilon + \sup_{\pOj}|f|\le 3\epsilon+ \sup_{\pOinf} |f| 
	\end{equation}
	Therefore \eqref{eqn:106D} yields $ \sup_{\Oinf} |v(Z)| \leq 3\epsilon + \sup_{\pOinf} |f| $ for all $\epsilon>0$, and thus $\sup_{\Oinf}|v(Z)| \leq \sup_{\pOinf} |f|$. To summarize,
	 for any $f\in C_c(\RR^n) \cap W^{1,2}(\RR^n)$ we construct a continuous function $v$ satisfying
	\begin{equation}\label{eqn:ellunbd}
		\left\{\begin{array}{ll}
			L_{\infty}v=0, & \text{in } \Oinf \\
			v = f, & \text{on } \pOinf 
		\end{array} \right.
	\end{equation}
	and  satisfying the maximum principle $\sup_{\pOinf} |v| \leq \sup_{\pOinf} |f|$.
	
	  We observe that even though the constructions are different, in the case when the boundary value function $f$ is non-negative and 
	 $f\in C_c(\RR^n) \cap \Lip(\R^n) \cap W^{1,2}(\RR^n)$
	 we produce the same bounded solution $v$ as the one constructed in \cite{HM1} for the unbounded domain $\Omega_\infty$ (see page 13 of \cite{HM1} for details). [Note that the construction in \cite{HM1} is for the Laplacian but holds for any constant coefficient operator]. We denote the solution constructed in \cite{HM1} by $u$. Recall that $u = \lim_{R\to\infty} u_R$, where $u_R$ is the solution to $L_{\infty} u_R = 0$ in the bounded domain $\Omega_R = \Oinf \cap B(0,2R)$ with boundary value $f \eta(\cdot/R)$. Here $\eta$ is a smooth cut-off function such that $0\leq \eta\leq 1$, $\eta = 1$ for $|Z| < 1$ and $\spt \eta \subset \{Z\in\RR^n:|Z|<2\}$. Assume $f$ is compactly supported on $B(0,R_0)$. Then for any $R\geq R_0$, by the maximum principle $u_R \leq v$ in $\Omega_R$, thus the limit $u \leq v$ on $\Oinf$. Set $w=v- u$, it is a non-negative solution to $L_{\infty} w=0$ in $\Oinf$ with vanishing boundary value. Fix $Z\in \Oinf$, since $\Oinf$ is a uniform domain with Ahlfors regular boundary, by Lemma \ref{lem:vanishing}
	  for any $Z\in\Omega_\infty$ with $\delta_{\infty}(Z)<\frac{r}{2}$
\begin{equation}\label{eqn:Holderinfty}
		w(Z) \lesssim \left( \frac{\delta_{\infty}(Z)}{r} \right)^{\beta} \sup_{\Oinf} w \leq 2 \left( \frac{\delta_{\infty}(Z)}{r} \right)^{\beta} \sup_{\pOinf} f,    	\end{equation}
	Letting $r\to\infty$ in \eqref{eqn:Holderinfty} we get $w(Z) = 0$. Thus $v\equiv u$ in $\Oinf$.
	 Recall that at this point for $f\in C_c(\RR^n)\cap \Lip(\R^n) \cap  W^{1,2}(\RR^n)$ we are only able to find a subsequence (possibly depending on $f$) converging to a continuous function $v$ that solves \eqref{eqn:ellunbd}. 
	 We claim that in the case when also $f\ge 0$, the entire sequence $v_j$ converges to $v$. In fact given two arbitrary subsequences $\{v_{j_k}\}$ and $\{v_{j'_k}\}$ of $\{v_j\}$, the 
	 argument above shows that either sequence has a further subsequence that converges to a continuous function, denoted by $v_1$ and $v_2$ respectively. Both functions $v_1$ and 
	 $v_2$ satisfy the equation \eqref{eqn:ellunbd} and maximum principle. Once again by the previous argument they are both equal to $u$, thus $v_1= v_2$ in $\Oinf$. Therefore 
	 the entire sequence $\{v_j\}$ converges to a same continuous function $v= u$. In general if $f$ is not necessarily non-negative, we just decompose it into two non-negative functions $f=f^+-f^-$, with $f^\pm=\max
	 \{0,\pm f \}\ge 0$ and $f^\pm\in C_c(\RR^n)\cap \Lip(\R^n) \cap W^{1,2}(\RR^n)$. The argument above yields $v^\pm$ satisfying  \eqref{eqn:ellunbd} with boundary data $f^\pm$ respectively. Then 
	 $v=v^+-v^-$ is a solution to \eqref{eqn:ellunbd} with boundary data $f$ and satisfying the maximum principle.
	Hence for any $Z\in \Oinf$ fixed, the operator $\Lambda_Z: C_c(\RR^n)\cap\Lip(\R^n)\cap W^{1,2}(\RR^n) \to \RR$ defined by $\Lambda_Z(f)=v(Z)$ is positive bounded 
	(with respect to the $\|\cdot\|_{L^\infty({\R^n})}$ norm) linear functional. Hahn-Banach theorem allows us to extend $\Lambda_Z$ to a positive bounded linear functional on all of $C_c(\R^n)$, with the same operator norm. We still denote the functional as $\Lambda_Z: C_c(\R^n) \to W^{1,2}(\R^n)$.
	By Riesz representation theorem there exists a unique family of Radon measures $\{\omega_{\infty}^Z\}_{Z\in \Oinf}$ such that 
	\[
		\Lambda_Z(f) = \int_{\pOinf} f(q) d\omega_{\infty}^Z(q) \quad \text{ for all } f\in C_c(\R^n).
	\]
	In particular for $f\in C_c(\R^n) \cap \Lip(\R^n) \cap W^{1,2}(\R^n)$, the measures $\{\omega_\infty^Z\}_{Z\in\Oinf}$ satisfies
	\begin{equation}\label{eqn:109A}
		v(Z) = \Lambda_Z(f) = \int_{\pOinf} f(q) d\omega_{\infty}^Z(q).
	\end{equation} 

	
	Recall that the sequence $\{u_j\}$ converges uniformly to $u$ in compact sets. Thus combining \eqref{eqn:102A} and \eqref{eqn:109A} we have that for all $f\in C_c(\RR^n) \cap \Lip(\R^n) \cap W^{1,2}(\RR^n)$
	\begin{equation}\label{eqn:110A}
		\lim_{j\to\infty} \int_{\pOj} f(q) d\omega_j^Z(q) = \int_{\pOinf} f(q) d\omega_{\infty}^Z(q).
	\end{equation}
	A standard approximation argument shows that \eqref{eqn:110A} holds for all $f\in C_c(\RR^n)$.
	And we conclude that $\omega_j^Z \rightharpoonup \omega_\infty^Z$ as Radon measures, for any $Z\in\Oinf$.
	\vspace{.4cm}
	
	To show that $\omega_{\infty}\in A_{\infty}(\sigma_{\infty})$ (recall $\sigma_{\infty} = \mathcal{H}^{n-1}\res{\pOinf}$ is the surface measure) in the sense of $\cite{HMU}$, let $p\in \pOinf$ and $r>0$, and $\Delta' = B(m,s)\cap \pOinf \subset \Delta = B(p,r) \cap \pOinf$ with $m\in\pOinf$. Recall that we denote by $A(p,r)$ a non-tangential point in $\Oinf$ to $p$ at radius $r$; see the proof of Theorem \ref{thm:pseudo-blow-geo} (3). Since $\pOj \to \pOinf$, there exist $p_j \in \pOj$ such that $p_j \to p$ and thus for $j$ large enough $A(p,r) $ is also a non-tangential point to $p_j$ in $\Oj$ with radius $2r$. 
	Since $m\in\pOinf$, there also exist $m_j\in\pOj$ such that $m_j\to m$. In particular for $j$ sufficiently large 
	\begin{equation}\label{eq:mjm}
		|m_j - m| < \frac{s}{5}.
	\end{equation}
	
	 Since the $\Oj$'s are uniform and satisfy the CDC with the same constant and since the operators $L_j$'s have ellipticity constants bounded by $\lambda$ and $\Lambda$, we conclude from Corollary \ref{ellip-lb} and Lemma \ref{doubling} that $\oj^{A(p,r)}$ is doubling with a universal constant (independent of $j, p, r$), denoted by $C$.
		Hence by Theorem 1.24 in \cite{Ma} we have
		\begin{align}
			\oinf^{A(p,r)} (\Delta(m,s)) \geq \oinf^{A(p,r)} \left( \overline{\Delta\left(m,\frac{4}{5}s\right)} \right) & \geq \limsup_{j\to\infty} \oj^{A(p,r)} \left( \overline{\Delta\left(m,\frac{4}{5}s\right)} \right) \nonumber \\
			& \geq \limsup_{j\to\infty} \oj^{A(p,r)} \left( \overline{\Delta\left(m_j,\frac{3}{5}s\right)} \right) \nonumber \\
			& \geq C^{-1}  \limsup_{j\to\infty} \oj^{A(p,r)} \left( \Delta\left(m_j,\frac{6}{5}s\right) \right). \label{eqn:113A}
		\end{align}
		Let $V$ be an arbitrary open set in $B(m,s)$, by \eqref{eq:mjm}
		\[
			V\subset B(m,s) \subset B\left(m_j, \frac{6}{5} s\right).
		\] 
		Again by Theorem 1.24 in \cite{Ma} and \eqref{eqn:113A} we have
		\begin{align}
			\frac{\oinf^{A(p,r)}(V)}{\oinf^{A(p,r)}(\Delta(m,s))} & \leq C \dfrac{\liminf_{j\to\infty} \ojA(V)}{\limsup_{j\to\infty} \oj^{A(p,r)} \left( \Delta\left(m_j,\frac{6}{5}s\right) \right) } \nonumber \\
			 & \leq C \liminf_{j\to\infty} \left( \dfrac{\ojA(V) }{\oj^{A(p,r)} \left( \Delta\left(m_j,\frac{6}{5}s\right) \right) }\right). \label{eqn:114A}
		\end{align}
	Let $\wRj = q_j + r_j m_j$ and $\widetilde p_j = q_j + r_j p_j$ in $\pO$, by the definition \eqref{eqn:blow3} of $\oj$,
	\begin{equation}\label{eq:rescaleoj}
		\dfrac{\ojA(V) }{\oj^{A(p,r)} \left( \Delta\left(m_j,\frac{6}{5}s\right) \right) } = \dfrac{\omega^{q_j + r_j A(p,r)}(q_j + r_j V) }{\omega^{q_j + r_j A(p,r)}\left( \Delta \left(\wRj,\frac{6}{5}sr_j\right)\right) }.
	\end{equation}
	The assumption $B(m,s) \subset B(p,r)$ implies $|m-p| \leq r-s$. Thus by $m_j\to m$, $p_j\to p$ we have
	\[
		|m_j - p_j| \leq |m_j - m| + |m-p| + |p-p_j| < r-\frac{s}{5}.
	\]
	Note $s<r$, hence
	\[
		\Delta\left( m_j, \frac{6}{5} s\right) \subset \Delta(p_j,2r).
	\]
	Recall that $A(p,r)$ is a non-tangential point in $\Omega_j$ to the boundary point $p_j$ at radius $2r$. Therefore after rescaling from $ \Omega_j$ to $\Omega$, we have that $q_j + r_j A(p,r)$ is a non-tangential point to the boundary point $\widetilde p_j$ at radius $2rr_j$, and that
	\[
		q_j + r_j V \subset \Delta \left(\wRj, \frac{6}{5} s r_j \right) \subset \Delta(\widetilde p_j, 2rr_j).
	\]
	By the assumption that $\omega_L \in A_\infty(\sigma)$ (see Definition \ref{def:AinftyHMU}), we conclude that 
	\begin{equation}
		\dfrac{\omega^{q_j + r_j A(p,r)}(q_j + r_j V) }{\omega^{q_j + r_j A(p,r)}\left( \Delta \left(\wRj,\frac{6}{5}sr_j\right)\right) } \leq C \left( \dfrac{\mathcal{H}^{n-1}\left(\pO\cap \left(q_j + r_j V\right)\right) }{ \mathcal{H}^{n-1} \left( \pO \cap B\left( \wRj, \frac{6}{5} sr_j \right) \right) } \right)^{\theta}. \label{eq:Ainftypole}
	\end{equation}
	Combining \eqref{eqn:114A}, \eqref{eq:rescaleoj} and \eqref{eq:Ainftypole}, using the definition \eqref{eqn:blow3} of $\sj$, $\sj\rightharpoonup \sinf$ and that $\sigma = \mathcal{H}^{n-1}|_{\pO}$ and $\sinf$ are Ahlfors regular with the the same constant, we get
		\begin{align}
			\dfrac{\omega_{\infty}^{A(p,r)}(V) }{\omega_{\infty}^{A(p,r)}(\Delta(m,s)) } & \leq 
			C \liminf_{j\to\infty} \left( \dfrac{\mathcal{H}^{n-1}\left(\pO\cap \left(q_j + r_j V\right)\right) }{ \mathcal{H}^{n-1} \left( \Delta\left( \wRj, \frac{6}{5} s r_j \right) \right) } \right)^{\theta} \nonumber \\
			& \lesssim \left( \liminf_{j\to\infty} \frac{\sigma(q_j + r_j V)}{(r_j s)^{n-1}} \right)^{\theta} \nonumber \\
		& \leq \left( \frac{1}{s^{n-1}} \liminf_{j\to\infty} \sj(V)  \right)^{\theta} \nonumber \\ 
		& \leq \left( \frac{\sinf(\overline V)}{\sinf(\Delta(m,s))}   \right)^{\theta}, \label{eqn:117A}
		\end{align}
		Recall that $\mu_{\infty}$ is equivalent to the surface measure $\sigma_{\infty} = \mathcal{H}^{n-1} \res{\pOinf}$ (see  Theorem \ref{thm:pseudo-blow-geo} \eqref{im:smcv}). Hence \eqref{eqn:117A} yields that for any open set $V\subset \Delta(m,s) \subset \Delta(p,r)$ with $p, m \in \pOinf$
		\begin{equation}\label{eqn:118A}
			\dfrac{\omega_{\infty}^{A(p,r)}(V) }{\omega_{\infty}^{A(p,r)}(\Delta(m,s)) } \leq C \left( \frac{\sigma_{\infty}(\overline V)}{\sigma_{\infty}(\Delta(m,s))} \right)^\theta.
		\end{equation} 
		
		For any $E\subset B(m,s)$ closed, since $\sigma_{\infty}$ is a Radon measure, given any $\epsilon>0$ there is an open set $V$ satisfying $E\subset V \subset B(m,s)$ and $\sigma_{\infty}(V\setminus E) <\epsilon$. Note that for any $x\in E$, there is $r_x >0 $ such that $B(x,2r_x) \subset V$ and $E\subset \cup_{x\in E} B(x,r_x)$. Since $E$ is compact we can extract a finite subcover $E\subset \cup_{i=1}^m B(x_i, r_i) =  U$ and $B(x_i, 2r_i) \subset V$ for $i= 1,\cdots,m$. Note that $E\subset U \subset \overline U \subset V$. Thus $\sigma_{\infty}(\overline U \setminus E) <\epsilon$, and using \eqref{eqn:118A} we have
		\begin{align*}
			\dfrac{\oinf^{A(p,r)}(E) }{\oinf^{A(p,r)}(\Delta(m,s)) } \leq \dfrac{\oinf^{A(p,r)}(U) }{\oinf^{A(p,r)}(\Delta(m,s)) } \leq C \left( \frac{\sigma_{\infty}(\overline U)}{\sigma_{\infty}(\Delta(m,s))} \right)^\theta \leq C \left( \frac{\sigma_{\infty}(E)+\epsilon}{\sigma_{\infty}(\Delta(m,s))} \right)^\theta. \label{eqn:119A}
		\end{align*}
		Letting $\epsilon \to 0$ we have that for any closed set $E\subset B(m,s)$
		\begin{equation}\label{eqn:120A}
			\dfrac{\oinf^{A(p,r)}(E) }{\oinf^{A(p,r)}(\Delta(m,s)) } \leq C \left( \frac{\sigma_{\infty}(E)}{\sigma_{\infty}(\Delta(m,s))} \right)^\theta. 
		\end{equation}
		Since both $\oinf^{A(p,r)}$ and $\sigma_{\infty}$ are Radon measures, \eqref{eqn:120A} holds for any Borel set $E\subset B(m,s)$, which concludes the proof of Theorem \ref{thm:blow-ana-pole}.

\end{proof}

\begin{corollary}\label{cor:blow-up-nta}
	Let $\Omega\subset \RR^n$ be a uniform domain with Ahlfors regular boundary. Let $L=-\divg(A\nabla)$ with $A\in C(\overline\Omega)$ (resp. $A\in W^{1,1}(\Omega) \cap L^{\infty}(\Omega)$). 
	Suppose that the elliptic measure $\omega \in A_{\infty}(\sigma)$ in the sense of \cite{HMU}.
	Then any pseudo-tangent domain $\Oinf$ (resp. tangent domain at a point $q\in \pO$ satisfying \eqref{eqn:v-osc}) is an NTA domain with constants depending only on the allowable constants.
\end{corollary}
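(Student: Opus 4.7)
The plan is to assemble the pieces from Theorems \ref{thm:pseudo-blow-geo}, \ref{thm:pseudo-blow-ana}/\ref{thm:AW11}, and \ref{thm:blow-ana-pole} so that $\Omega_\infty$ satisfies the hypotheses of Theorem \ref{thm:hmu}, and then upgrade uniform rectifiability of $\partial\Omega_\infty$ to the exterior corkscrew condition. Concretely, let $\Omega_\infty$ be a pseudo-tangent domain to $\Omega$ (in the $C(\overline\Omega)$ case) or a tangent domain at a point $q\in\partial\Omega$ satisfying \eqref{eqn:v-osc} (in the $W^{1,1}\cap L^\infty$ case), and let $L_\infty$ be the associated limiting operator.

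First, Theorem \ref{thm:pseudo-blow-geo} guarantees that $\Omega_\infty$ is a non-trivial unbounded uniform domain with constants comparable to those of $\Omega$, and that $\sigma_\infty=\mathcal{H}^{n-1}\res\partial\Omega_\infty$ is Ahlfors regular with the same constants as $\sigma$. Next, Theorems \ref{thm:pseudo-blow-ana} and \ref{thm:AW11} identify $L_\infty$ as a uniformly elliptic, symmetric, \emph{constant coefficient} operator (with ellipticity constants $\lambda,\Lambda$). Finally, Theorem \ref{thm:blow-ana-pole} shows that the corresponding elliptic measure $\omega_{L_\infty}=\omega_\infty$ belongs to $A_\infty(\sigma_\infty)$ in the HMU sense, with quantitative constants that descend only from the $A_\infty$ constants of $\omega_L$. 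All the constants entering the hypotheses of Theorem \ref{thm:hmu} for the limiting object are therefore controlled solely by the allowable constants of the original problem.

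Applying Theorem \ref{thm:hmu} to the unbounded uniform domain $\Omega_\infty$ with Ahlfors regular boundary and constant coefficient operator $L_\infty$ whose elliptic measure is $A_\infty$ yields that $\partial\Omega_\infty$ is uniformly rectifiable, with UR constants depending only on the allowable data. To conclude that $\Omega_\infty$ is NTA, I would invoke the equivalence proved in the uniform/ADR setting (see the results used in \cite{HMU} and \cite{AHMNT}): a uniform domain with Ahlfors regular and uniformly rectifiable boundary automatically satisfies the exterior corkscrew condition, so it is NTA, again with constants determined by the previous ones. Combining this with the uniform character already established by Theorem \ref{thm:pseudo-blow-geo}\eqref{im:uniform} gives the conclusion.

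The main obstacle, and the reason one cannot simply read the statement off Theorem \ref{thm:hmu}, is the careful bookkeeping of constants: one has to verify that the uniform, Ahlfors regularity, ellipticity, and $A_\infty$ constants of the blow-up are inherited from those of $\Omega$ and $L$ (and in particular do \emph{not} depend on the base point $q$, the sequence $\{q_j\}$, or the scales $\{r_j\}$). This is exactly what Theorems \ref{thm:pseudo-blow-geo}, \ref{thm:pseudo-blow-ana}/\ref{thm:AW11} and \ref{thm:blow-ana-pole} were designed to provide, so the corollary reduces to stringing these quantitative statements together and then invoking the uniform-plus-ADR-plus-UR-implies-exterior-corkscrew result.
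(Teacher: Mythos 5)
Your proposal is correct and follows essentially the same route as the paper's own proof: combine Theorem \ref{thm:blow-ana-pole} (which already packages the output of Theorems \ref{thm:pseudo-blow-geo}, \ref{thm:pseudo-blow-ana}/\ref{thm:AW11}) with Theorem \ref{thm:hmu} to get that $\Omega_\infty$ is a uniform domain with Ahlfors regular, uniformly rectifiable boundary with constants depending only on the allowable data, and then invoke \cite{AHMNT} to upgrade this to the exterior corkscrew (NTA) property. The paper's proof is more terse, but the logical chain and the key external input from \cite{AHMNT} are identical to yours.
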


\begin{proof}
	Theorem \ref{thm:blow-ana-pole} combined with Theorem \ref{thm:hmu} ensures that under the hypotheses of Theorem \ref{thm:lnta} (resp. Theorem \ref{thm:lfp}), all pseudo blow-ups of $\Omega$ (resp. all blow-ups of $\Omega$ at points $q\in \pO$ satisfying \eqref{eqn:v-osc}) are uniform domains with uniformly rectifiable boundaries with constants depending on the allowable constants. By \cite{AHMNT} we conclude that all such domains are NTA domains with constants depending only on the allowable constants.
\end{proof}

\section{Proof of Theorems \ref{thm:lfp} and \ref{thm:lnta}}

Given Corollary \ref{cor:blow-up-nta}, we may assume that all pseudo-tangent domains in the case $A\in C(\overline\Omega)$ or tangent domains at points $q\in\pO$ satisfying \eqref{eqn:v-osc} in the case $A\in W^{1,1}(\Omega) \cap L^{\infty}(\Omega)$ are NTA domains with exterior corkscrew constant $M_{\infty}$. That is if $\Oinf$ is obtained via this blow-up procedure then for any $p\in \pOinf$ and $r>0$, there exists $A_{\infty}^-(p,r)\subset \Oinf^c \cap B(p,r)$ such that
\begin{equation}\label{eqn:201A}
	B\left(A_{\infty}^-(p,r), \frac{r}{M_{\infty}} \right) \subset \Oinf^c \cap B(p,r)
\end{equation}
and in particular
\[
	\dfrac{\mathcal{H}^n(\Oinf^c \cap B(p,r))}{ r^n} \geq \frac{c_n}{M_{\infty}^n} > 0 \quad \text{ for any }r>0.
\]

\begin{myproof}[Proof of Theorem \ref{thm:lnta}]
	We want to show that there exists $r_\Omega>0$, such that $\Omega$ satisfies the exterior corkscrew condition with constant $2M_{\infty}$ for all $q\in \partial\Omega$ and all $r<r_\Omega$. Assume that such an $r_\Omega$ does not exist, then there are sequences $r_j\to 0$ and $q_j\in\pO$ such that we cannot find a corkscrew point in $\Omega^c$ with constant $2 M_{\infty}$ at $q_j\in\pO$ with radius $r_j$. Consider $\Oj = (\Omega- q_j)/r_j$, then apply Theorem \ref{thm:pseudo-blow-geo}, Corollary \ref{cor:blow-up-nta} and \eqref{eqn:201A} to find a point $A^-_{\infty}(0,1) \subset \Oinf^c \cap B(0,1) $ such that $B(A^-_{\infty}(0,1), 1/M_{\infty}) \subset \Oinf^c \cap B(0,1)$. Since $\overline\Oj \to \overline\Oinf$ locally uniformly on compact sets, for $j$ large enough
	\[
		B\left( A^-_{\infty}(0,1), \frac{1}{2M_{\infty}} \right) \subset \Oj^c \cap B(0,1),
	\]
	which implies
	\begin{equation}\label{eqn:205A}
		B\left( A_j, \frac{r_j}{2M_{\infty}} \right) \subset \Omega^c \cap B(q_j,r_j) \quad \text{with } A_j = q_j + r_j A^-_{\infty}(0,1).
	\end{equation}
	This contradicts our assumption.
\end{myproof}

\begin{myproof}[Proof of Theorem \ref{thm:lfp}]
	Let $q\in\pO$ such that \eqref{eqn:v-osc} holds. Recall this occurs for $\mathcal{H}^{n-1}$ a.e. $q\in\pO$ (see Lemma \ref{A-vanishing-osc}). Since $\Omega$ satisfies the interior corkscrew condition (with a constant $M$), for any $q\in\pO$,
	\begin{equation}\label{eqn:206A}
		\liminf_{r\to 0} \dfrac{\mathcal{H}^n(\Omega \cap B(q,r))}{r^n} \geq \frac{c_n}{M} >0.
	\end{equation}
	Let $r_j \to 0$ and $\Oj = (\Omega- q)/r_j$. By Theorem \ref{thm:pseudo-blow-geo}, Corollary \ref{cor:blow-up-nta}, \eqref{eqn:201A} and a similar argument as in \eqref{eqn:205A}, we have that for a subsequence (which we relabel) $B(A_j, r_j/2M_{\infty}) \subset \Omega^c \cap B(q,r_j)$, where $A_j = q+r_j A^-_{\infty}(0,1)$. Thus
	\begin{equation}\label{eqn:207A}
		\liminf_{j\to\infty} \dfrac{\mathcal{H}^n(\Omega^c \cap B(q,r_j))}{r_j^n} \geq \frac{c_n}{(2M_{\infty})^n} >0.
	\end{equation} 
	Combining \eqref{eqn:206A} and \eqref{eqn:207A}, we conclude that such $q$ belongs to the measure-theoretic boundary $\partial_*\Omega$ of $\pO$, thus $\mathcal{H}^{n-1}(\pO \setminus \partial_*\Omega) = 0$. Since $\mathcal{H}^{n-1}\res{\pO}$ is Ahlfors regular, it is in particular locally finite. Theorem 1 of Section 5.11 in \cite{Evans} ensures that $\Omega$ is a set of locally finite perimeter. Thus the reduced boundary $\partial^*\Omega$ is rectifiable. Since $\mathcal{H}^{n-1}(\partial_*\Omega \setminus \partial^*\Omega) = 0$ the fact that 
	$\mathcal{H}^{n-1}(\pO \setminus \partial_*\Omega) = 0$ implies $\mathcal{H}^{n-1}(\pO \setminus \partial^*\Omega) = 0$. We conclude that $\pO$ is rectifiable.
	\end{myproof}

\section{Qualitative case: reduction to local quantitative case}\label{qualitative}

In this section we discuss how the quantitative approach also yields information about the qualitative case.
Theorem \ref{thm:qc} is proved by reducing it to the following situation which can be seen as a local version of Theorem \ref{thm:lfp}.


\begin{theorem}\label{thm:lfplocal}
Let $\Omega\subset \mathbb{R}^n$ be a bounded uniform domain with Ahlfors regular boundary. Let $L = -\divg(A(X)\nabla)$ with $A\in W^{1,1}(\Omega)\cap L^{\infty}(\Omega)$ satisfying \eqref{def:UE}. Suppose that $G\subset \pO$ is an open set. Assume there are uniform positive constants $C_0,\theta$ so that for any surface ball $\Delta= \Delta(q,r) \subset G$, the elliptic measure with pole at $A_{\Delta}$ satisfies
\begin{equation}\label{eq:AinftyD}
	\frac{\omega^{A_\Delta}(E)}{\omega^{A_\Delta}(\Delta')} \leq C_0 \left( \frac{\sigma(E)}{\sigma(\Delta')} \right)^\theta,
\end{equation}
where $A_{\Delta}$ is a non-tangential point with respect to $\Delta$, and \eqref{eq:AinftyD} holds for all surface balls $\Delta'\subset \Delta$ and all Borel sets $E\subset \Delta'$.
Then $G$ is $(n-1)$-rectifiable. 
\end{theorem}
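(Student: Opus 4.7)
The plan is to reduce to the proof of Theorem \ref{thm:lfp} by working locally at $\mathcal{H}^{n-1}$-a.e. point of $G$. Since $G$ is open in $\pO$, for $\mathcal{H}^{n-1}$-a.e. $q \in G$ there is a radius $r_q > 0$ with $\Delta(q, r_q) \subset G$; moreover, by Lemma \ref{A-vanishing-osc}, we may assume simultaneously that $q$ satisfies the vanishing oscillation condition \eqref{eqn:v-osc}. On every surface subball of $\Delta(q, r_q)$ the local $A_\infty$ hypothesis \eqref{eq:AinftyD} holds with the same uniform constants $C_0, \theta$.

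Fix such a $q$ and a sequence $r_j \to 0$ with $r_j < r_q/2$, and run the blow-up construction of Section \ref{sect:blowup} with $q_j = q$. Theorems \ref{thm:pseudo-blow-geo} and \ref{thm:AW11} produce a tangent domain $\Oinf$, a tangent function $u_\infty$ that is a Green function at infinity for $L_\infty = -\divg(A^*(q)\nabla)$, and tangent measures $\sinf$ and $\oinf$. The key observation is that the argument of Theorem \ref{thm:blow-ana-pole} needs only the local version of $A_\infty$: its single use of \eqref{eqn:Ainfty} in \eqref{eq:Ainftypole} takes place on a surface ball $\Delta(\widetilde p_j, 2 r r_j)$ with center $\widetilde p_j = q + r_j p_j \to q$ and $r$ a fixed parameter depending only on the target ball in $\pOinf$; hence $|\widetilde p_j - q| + 2 r r_j \to 0$, and for all $j$ large enough this surface ball is contained in $\Delta(q, r_q) \subset G$, so \eqref{eq:AinftyD} applies. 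Consequently $\oinf \in A_\infty(\sigma_\infty)$, and by Corollary \ref{cor:blow-up-nta} the tangent $\Oinf$ is NTA with exterior corkscrew constant $M_\infty$ depending only on the allowable constants and on $C_0, \theta$.

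Transferring the exterior corkscrew at $0 \in \pOinf$ back to $\Omega$ via the Hausdorff convergence $\overline{\Oj} \to \overline{\Oinf}$, exactly as in the proof of Theorem \ref{thm:lfp}, we obtain
\[
\liminf_{j\to\infty} \frac{\mathcal{H}^n(\Omega^c \cap B(q, r_j))}{r_j^n} \geq \frac{c_n}{(2M_\infty)^n} > 0,
\]
which combined with the interior corkscrew property of $\Omega$ places $q$ in the measure-theoretic boundary $\partial_*\Omega$. Thus $\mathcal{H}^{n-1}(G \setminus \partial_*\Omega) = 0$. Because $\sigma$ is locally finite on $G$, Theorem 1 in Section 5.11 of \cite{Evans} applied to $\Omega$ intersected with a neighborhood of $G$ shows that $\Omega$ has locally finite perimeter there; since $\mathcal{H}^{n-1}(\partial_*\Omega \setminus \partial^*\Omega) = 0$ and the reduced boundary is rectifiable, we conclude that $G$ is $(n-1)$-rectifiable.

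The main obstacle is the middle step: verifying that every application of the global $A_\infty$ hypothesis in the proof of Theorem \ref{thm:blow-ana-pole} takes place on pre-blow-up surface balls that eventually shrink into the fixed open set $G$, so that \eqref{eq:AinftyD} suffices in place of \eqref{eqn:Ainfty}. Once this localization is checked, the remainder is a line-by-line transcription of the arguments of Section \ref{sect:blowup} and the proof of Theorem \ref{thm:lfp}, with no quantitative estimate degraded.
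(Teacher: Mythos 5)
Your proof is correct and takes essentially the same approach as the paper: both identify that the sole use of the $A_\infty$ hypothesis in Theorem \ref{thm:blow-ana-pole} occurs in \eqref{eq:Ainftypole} on a surface ball $\Delta(\widetilde p_j, 2rr_j)$ that, for $q_j = q$ fixed and $r_j \to 0$, eventually lies inside $\Delta(q,r_0) \subset G$, so the local hypothesis \eqref{eq:AinftyD} suffices. The remaining steps — invoking Corollary \ref{cor:blow-up-nta}, transferring the exterior corkscrew back via Hausdorff convergence, and finishing with a local form of Theorem 1, Section 5.11 of \cite{Evans} — match the paper's argument line for line.
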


\begin{remark}
	Note that the assumption \eqref{eq:AinftyD} is a local version of $\omega \in A_{\infty}(\sigma) $ in the sense of \cite{HMU}. Recall that the proof of Theorem \ref{thm:lfp} consists of understanding the 
	blow-ups of the domain $\Omega$ at some $q\in\pO$ and showing that the $A_{\infty}$ property of the elliptic measure holds for the tangent domain $\Oinf$. Since tangent objects only provide infinitesimal information at the blow-up point it is not surprising that only local assumptions are necessary to obtain rectifiablilty.

\end{remark}

\begin{proof}
	If $G$ is empty, there is nothing to prove, so we assume $G\neq \emptyset$.
	Since $G$ is an open subset of an Ahlfors regular boundary and $\sigma \ll \omega$, we have $\sigma(G)>0$ and thus $\omega(G)>0$.
	Consider 
	\[
	\widehat G= \big\{q\in G: A^*(q) \text{ exists as in Lemma \ref{A-vanishing-osc} } \big\}, 
	\]
	then $\sigma(G\setminus \widehat G) = 0$. 
	Theorems \ref{thm:pseudo-blow-geo} and \ref{thm:AW11} hold if we consider a geometric blow-up at $q\in \widehat G$. We claim that the tangent domain $\Omega_{\infty}$ at every $q\in\widehat G$ satisfies 
	its elliptic measure $\omega_{\infty} $ is of class $A_\infty$ with respect to the surface measure $\sigma_\infty = \mathcal{H}^{n-1}|_{\pOinf}$ in the sense of \cite{HMU}, i.e. Theorem \ref{thm:blow-ana-pole} holds.
	Namely we need to show for any point $p\in \pOinf$, any surface ball $\Delta(m,s)=B(m,s)\cap\pOinf \subset B(p,r)\cap \pOinf$ with $m\in \pOinf$, $r,s >0$ and any open subset $V$ of $B(m,s)$, 
	\begin{equation}\label{eqn6.1}
	\dfrac{\omega_{\infty}^{A(p,r)}(V) }{\omega_{\infty}^{A(p,r)}(\Delta(m,s)) } \leq C \left( \frac{\sigma_{\infty}(\overline V)}{\sigma_{\infty}(\Delta(m,s))} \right)^\theta.
	\end{equation}
	 Note that in the proof of Theorem \ref{thm:blow-ana-pole}, the construction of the elliptic measure $\omega_{\infty}$ does not require the $A_{\infty}$ property of $\omega$. It only uses the fact that $\Omega$ is a uniform domain with Ahlfors regular boundary and that $A^*(q)$ exists. Moreover, 	  we still have $\omega_j^Z \rightharpoonup \omega_{\infty}^Z$ for any $Z\in \Omega_{\infty}$.  
 	Recall the notations in the proof of Theorem \ref{thm:blow-ana-pole} (note that in this case,  we have the blow-up point $q_j = q$ for all $j$): there are sequences $\pOj \ni p_j \to p\in\pOinf$, $\pOj \ni m_j \to m \in\partial\Omega_{\infty}$, and we let $\widetilde m_j = q+r_j m_j$, $\widetilde p_j = q+ r_j p_j$ on $\pO$. A close look at the proof of Theorem \ref{thm:blow-ana-pole} shows that to prove \eqref{eqn6.1} it is enough to show \eqref{eq:Ainftypole}, which we rewrite here:
	\begin{equation}\label{copyeq:Ainftypole}
		\dfrac{\omega^{q + r_j A(p,r)}(q_j + r_j V) }{\omega^{q + r_j A(p,r)}\left(\Delta \left( \wRj, \frac{6}{5} sr_j \right)\right) } \leq C \left( \dfrac{\mathcal{H}^{n-1}\left(\pO\cap \left(q + r_j V\right)\right) }{ \mathcal{H}^{n-1} \left( \pO \cap B\left( \wRj, \frac{6}{5} sr_j \right) \right) } \right)^{\theta}.
	\end{equation}
	Moreover since $G$ is open, for any $q\in\widehat G\subset G$ we can find a surface ball $\Delta_0 = \Delta(q,r_0) \subset G$. Hence if $j$ is large enough (so that $r_j$ is small enough), we have the surface ball
	\begin{equation}\label{eqn6.100}
	\Delta(\widetilde p_j, 2rr_j) \subset \Delta\left(q, 2(r+|p|) r_j \right) \subset \Delta(q,r_0) 
	\end{equation}
is contained in $G$. Therefore we may apply the assumption \eqref{eq:AinftyD} to the surface ball 
\[ \Delta' = \Delta \left( \wRj, \frac{6}{5} sr_j \right)=B(\widetilde m_j, 6 s r_j/5)\cap\pO\subset \Delta(\widetilde p_j, 2rr_j), \] 
with non-tangential pole $q+r_j A(p,r)$ and to the Borel set $E= q+r_j V\subset \Delta \left( \wRj, \frac{6}{5} sr_j \right)$ and obtain \eqref{copyeq:Ainftypole}. (Recall that $\sigma = \mathcal{H}^{n-1}|_{\pO}$.)
	By the same argument as in the proof of Theorem \ref{thm:blow-ana-pole} we conclude that the tangent domain $\Omega_{\infty}$ satisfies $\omega_{\infty} \in A_{\infty}(\sigma_{\infty})$ in the sense of \cite{HMU} . 
	Hence as in the Theorem \ref{thm:lfp}, we have that $\widehat G \subset \partial_*\Omega$, where $\partial_*\Omega$ is the measure-theoretic boundary of $\Omega$. A local version of Theorem 1 in Section 5.11 in \cite{Evans} ensures that $\widehat G$ is rectifiable, and so is $G$.
\end{proof}

Before reducing Theorem \ref{thm:qc} to Theorem \ref{thm:lfplocal}, we recall some results on uniform domains with the CDC which are needed for the proof.
\begin{lemma}[Change of pole formula]
	Let $\Omega$ be a bounded uniform domain satisfying the CDC and $L=-\divg(A(X)\nabla)$ be an elliptic operator satisfying \eqref{def:UE}. Let $X_0\in\Omega$ be fixed and denote the elliptic measure by $\omega = \omega^{X_0}$.  Suppose $q\in\pO$ and $r<\diam \Omega/4$ are such that $X_0 \notin B(q,4r)$, we denote $\Delta= B(q,r)\cap\pO$. Then for any surface ball $\Delta' \subset \Delta$ we have
	\begin{equation}\label{eq:cop}
		\frac{ \omega(\Delta')}{\omega(\Delta)} \sim \omega^{A_{\Delta}}(\Delta'),
	\end{equation}
	where $A_{\Delta}$ is a non-tangential point to surface ball $\Delta$.
\end{lemma}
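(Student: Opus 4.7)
The plan is to derive the change-of-pole formula from the boundary Harnack principle (BHP) applied to the two non-negative $L$-harmonic functions
\[
u(X):=\omega^X(\Delta'),\qquad v(X):=\omega^X(\Delta),
\]
both of which are $L$-harmonic in all of $\Omega$ and vanish continuously on $\pO\setminus\Delta$. By Corollary~\ref{ellip-lb} (combined with the trivial bound $v\le 1$) one has $v(A_\Delta)\sim 1$, so the claimed estimate $\omega(\Delta')/\omega(\Delta)\sim \omega^{A_\Delta}(\Delta')$ is equivalent to the ratio comparison
\begin{equation*}
\frac{u(X_0)}{v(X_0)}\;\sim\;\frac{u(A_\Delta)}{v(A_\Delta)}.\tag{$\heartsuit$}
\end{equation*}

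To establish $(\heartsuit)$, I would chain a bounded number of local BHPs together with interior Harnack inequalities to transport the ratio $u/v$ from $A_\Delta\in B(q,r)\cap\Omega$ to $X_0\in\Omega\setminus B(q,4r)$. Since $\Omega$ is uniform, one can produce a sequence $A_\Delta=Y_0,Y_1,\dots,Y_N=X_0$ such that each consecutive pair is either (i) joined by a Harnack ball lying in $\Omega$ at distance $\gtrsim r$ from $\Delta$, or (ii) contained in a ball $B(q^\ast,\rho)$ with $q^\ast\in\pO\setminus\overline{\Delta(q,2r)}$ and $B(q^\ast,2\rho)\cap\Delta=\emptyset$. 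Interior Harnack yields the comparison of $u/v$ across each type-(i) step; the BHP in uniform domains with CDC yields it across each type-(ii) step, since both $u$ and $v$ vanish on $\Delta(q^\ast,2\rho)\subset\pO\setminus\Delta$. The chain length $N$ is controlled by the allowable constants, so the accumulated multiplicative constant is universal.

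The main obstacle is the BHP for ratios of non-negative $L$-solutions in uniform domains with CDC, which is not established in the present excerpt but is the standard extension of the Jerison--Kenig NTA theory carried out in \cite{HMT2}; all the underlying ingredients (the H\"older decay of Lemma~\ref{lem:vanishing}, the Carleson-type estimate Lemma~\ref{harn-princ}, Lemma~\ref{CFMS}, and doubling) are available here. As a concrete alternative to the chaining argument, one can reduce $(\heartsuit)$ to the Green-function identity
\begin{equation*}
G(X_0,A_{\Delta'})\;\sim\;r^{n-2}\,G(X_0,A_\Delta)\,G(A_{\Delta'},A_\Delta)
\end{equation*}
by applying Lemma~\ref{CFMS} three times (using $X_0\notin B(q',4r')$ and $A_\Delta\notin B(q',4r')$ in the small-radius regime $r'<r/(16M)$, the complementary regime $r'\sim r$ being handled directly by Lemma~\ref{doubling} and Corollary~\ref{ellip-lb}), and then proving this identity by a single BHP on the annular region $(B(q,2r)\cap\Omega)\setminus\overline{B(A_\Delta,r/(4M))}$ applied to $\psi_1(Y)=G(X_0,Y)$ and $\psi_2(Y)=r^{n-2}G(X_0,A_\Delta)\,G(Y,A_\Delta)$: both are $L$-harmonic in the region (their respective singularities excised) and vanish on $\Delta(q,2r)$, and at a reference point $Y_\ast\in\partial B(A_\Delta,r/(4M))$ with $\delta(Y_\ast)\sim r/M$, interior Harnack together with \eqref{eqn:gub}--\eqref{eqn:glb} gives $\psi_1(Y_\ast)\sim G(X_0,A_\Delta)\sim \psi_2(Y_\ast)$, which the BHP then transports to $A_{\Delta'}$.
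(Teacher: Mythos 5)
Your reduction of \eqref{eq:cop} to the ratio comparison $(\heartsuit)$ via Corollary~\ref{ellip-lb} is precisely the paper's argument: $(\heartsuit)$ is exactly the boundary comparison principle $\omega(\Delta')/\omega(\Delta)\sim\omega^{A_\Delta}(\Delta')/\omega^{A_\Delta}(\Delta)$, which the paper likewise invokes as a cited black box (from \cite{Zh} and \cite{HMT2}) rather than proving. The chaining and Green-function sketches you offer for the comparison principle itself go beyond the paper's three-line proof but are consistent with it, so the two proofs coincide in approach.
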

\begin{proof}
	By Corollary \ref{ellip-lb}, we know \eqref{eq:cop} follows directly from
	\[ \frac{ \omega(\Delta')}{\omega(\Delta)} \sim \frac{\omega^{A_{\Delta}}(\Delta')}{ \omega^{A_\Delta}(\Delta)}, \] 
	i.e. the boundary comparison principle. See \cite{Zh} for the proof of the comparison principle when $\Omega$ is a uniform domain with Ahlfors regular boundary. For the case when we only assume $\Omega$ satisfies the CDC, the proof is to appear in detail in \cite{HMT2}.
\end{proof}

In fact \eqref{eq:cop} holds if we replace the surface ball $\Delta'$ by any Borel set $E\subset \Delta$, i.e.
\begin{equation}\label{eq:copBorel}
	\frac{ \omega(E)}{\omega(\Delta)} \sim \omega^{A_{\Delta}}(E).
\end{equation}
Suppose $V \subset \Delta$ is (relative) open in $\pO$. For any $x\in V$ let $\Delta_x = \Delta(x,r_x)$ be a surface ball satisfying $\Delta_x \subset V$ with $r_x<\frac{\delta(A_\Delta)}{16}$, then  $ V \subset \cup_{x\in V} \Delta_x$. By Vitali covering lemma we may extract a countable collection of pairwise disjoint balls $\{\Delta_{j}\}_{j\in J}$ such that
\begin{equation}\label{eq:copcoverbycube}
	V \subset \bigcup_{j\in J}  5 \Delta_{j}, \quad\text{where } 5\Delta_{j} := \Delta(x_j,5r_{x_j}).
\end{equation}
 By \eqref{eq:cop}, \eqref{eq:copcoverbycube} and the doubling properties of $\omega$ and $\omega^{A_{\Delta}}$, we have
\begin{align}\label{eqn6.2}
	\frac{\omega(V)}{\omega(\Delta)} \leq \sum_{j\in J} \frac{\omega\left(   5 \Delta_{j} \right)}{\omega(\Delta)} \le C \sum_{j\in J} \frac{\omega\left(   \Delta_{j} \right)}{\omega(\Delta)} \le C \sum_{j\in J} \omega^{A_{\Delta}}(\Delta_{j}) = C\omega^{A_{\Delta}}\left( \bigcup_{j\in J} \Delta_{j} \right) \leq C\omega^{A_{\Delta}}(V),
\end{align}
and similarly
\begin{align}\label{eqn6.3}
	\omega^{A_{\Delta}}(V) \leq \sum_{j\in J} \omega^{A_{\Delta}} (5\Delta_{j}) \le C\sum_{j\in J} \omega^{A_{\Delta}} (\Delta_{j}) \le C\sum_{j\in J} \frac{\omega(\Delta_{j})}{\omega(\Delta)} = C
	\frac{\omega\left( \bigcup_{j\in J} \Delta_{j} \right)}{\omega(\Delta)} \le C \frac{\omega(V)}{\omega(\Delta)}.
\end{align}
Now suppose $E$ is a Borel set contained in $\Delta$. Since $\omega^{A_{\Delta}}$ is a Radon measure, for any $\epsilon>0$ we can find an open set $V_{\epsilon} \supset E$ such that 
$\omega^{A_{\Delta}}(V_{\epsilon}\backslash E) <\epsilon$. We may assume $V_{\epsilon} \subset \Delta$ (if not, just replace $V_{\epsilon}$ by $V_{\epsilon} \cap \Delta $). Combined with \eqref{eqn6.2} we get
\begin{equation}\label{eqn6.4}
	\frac{\omega(E)}{\omega(\Delta)} \leq \frac{\omega(V_{\epsilon} )}{\omega(\Delta)} \le C \omega^{A_{\Delta}}(V_{\epsilon} ) \le C\left( \omega^{A_{\Delta}}(E) + \epsilon\right).
\end{equation}
Passing $\epsilon \to 0$ we get $\omega(E)/\omega(\Delta) \lesssim \omega^{A_{\Delta}}(E)$. By taking a different open set $V'_{\epsilon} \supset E$ satisfying $\omega(V'_{\epsilon}\backslash E)<\epsilon$, we can similarly use \eqref{eqn6.3} to show $\omega^{A_{\Delta}}(E) \lesssim \omega(E)/\omega(\Delta) $. This finishes the proof of \eqref{eq:copBorel}.\qed

\begin{lemma}[Dyadic grids on Ahlfors regular set, see \cite{DSSI}, \cite{DSUR}, \cite{Ch}]\label{lm:dgrid}
	Let $\Omega$ be a domain with Ahlfors regular boundary. There exist positive constants $a_0, \eta$, and $C_1$ depending only on $n$ and the Ahlfors regular constants, such that for each $k\in \mathbb{Z}$ there is a collection of Borel sets (``cubes'')
	\begin{equation*}
		\mathbb{D}_k := \{Q_j^k \subset \pO: j\in \mathcal{J}_k \},
	\end{equation*}
	where $\mathcal{J}_k$ denotes some (possibly finite) index set depending on $k$, satisfying
	\begin{enumerate}[(i)]
		\item $\pO= \cup_j Q_j^k$ for each $k\in \mathbb{Z}$;
		\item if $m\geq k$, then either $Q_i^m \subset Q_j^k$ or $Q_i^m \cap Q_j^k = \emptyset$;
		\item for each $(j,k)$ and each $m<k$, there is a unique $i$ such that $Q_j^k \subset Q_i^m$;
		\item $\diam Q_j^k \leq C_1 2^{-k}$;
		\item each $Q_j^k$ contains some ``surface ball'' $\Delta(x_j^k, a_0 2^{-k}) = B(x_j^k, a_0 2^{-k}) \cap \pO$;
		\item $\mathcal{H}^{n-1} \left( \{x\in Q_j^k: \dist(x,\pO \setminus Q_j^k) \leq \tau 2^{-k} \} \right) \leq C_1 \tau^\eta \mathcal{H}^{n-1}(Q_j^k)$ for all $k,j$ and all $\tau\in (0,a_0)$.
	\end{enumerate}
\end{lemma}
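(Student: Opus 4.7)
The plan is to invoke M.\ Christ's construction of dyadic cubes on a space of homogeneous type, specialized to the Ahlfors regular metric space $(\pO,|\cdot|,\mathcal{H}^{n-1}\res\pO)$. First, for each $k\in\ZZ$ I would fix a maximal $2^{-k}$-separated set $\{x_j^k\}_{j\in\mathcal{J}_k}\subset\pO$, constructed inductively so that $\{x_j^k\}_j\subset\{x_j^{k+1}\}_j$ (every coarse-scale center is reused as a fine-scale center). Maximality gives the covering property $\pO=\bigcup_j B(x_j^k,2^{-k})\cap\pO$, while separation gives $|x_i^k-x_j^k|\geq 2^{-k}$ for $i\neq j$. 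These two features immediately furnish property (v) with, e.g., $a_0=1/4$: any point in $\Delta(x_j^k,a_0 2^{-k})$ is strictly closer to $x_j^k$ than to any other center of the same generation.

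I would then assign each $x_j^{k+1}$ a unique parent $x_{\pi(j)}^k$ by choosing the closest center at the coarser scale (with a fixed tiebreaker, and with $\pi(j)=j$ whenever $x_j^{k+1}$ already belongs to $\{x_i^k\}_i$), and define the cubes recursively from below by
\[
	Q_j^k = \bigcup_{i:\,\pi(i)=j} Q_i^{k+1},
\]
initializing at a base scale via a Voronoi-type partition (in the bounded setting one may stop the upward recursion once $\mathcal{J}_{k_0}$ becomes a singleton containing all of $\pO$). Properties (i), (ii), (iii) are immediate from this construction. Property (iv) follows inductively: any descendant $x_i^{k+m}$ of $x_j^k$ satisfies $|x_i^{k+m}-x_j^k|\leq\sum_{\ell=1}^m 2^{-k-\ell}<2^{-k+1}$, and any point of $Q_i^{k+m}$ lies within $2^{-k-m}$ of $x_i^{k+m}$; together these yield $\diam Q_j^k\leq C_1 2^{-k}$.

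The main obstacle, and the heart of the lemma, is the small boundary property (vi). I would prove it by a self-improvement / iteration argument. For $Q=Q_j^k$ set $N_\tau(Q):=\{x\in Q:\dist(x,\pO\setminus Q)\leq\tau 2^{-k}\}$. The key claim is that there exist $\theta\in(0,1)$ and $m_0\in\NN$, independent of $k$ and $j$, such that
\[
	\mathcal{H}^{n-1}(N_{2^{-m_0}\tau}(Q))\leq\theta\,\mathcal{H}^{n-1}(N_\tau(Q)),\qquad 0<\tau<a_0.
\]
Iterating this bound yields $\mathcal{H}^{n-1}(N_\tau(Q))\leq C_1\tau^\eta\mathcal{H}^{n-1}(Q)$ with $\eta=-(\log_2\theta)/m_0$. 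To establish the decay step, cover $N_\tau(Q)$ by the descendant cubes $Q_i^{k+m_0}$ that meet it; by property (v) and Ahlfors regularity each such descendant contains a deep interior surface ball of $\mathcal{H}^{n-1}$-measure comparable to $2^{-(k+m_0)(n-1)}$, and for $m_0$ chosen large enough a definite proportion of these descendants have their deep interior disjoint from $N_{2^{-m_0}\tau}(Q)$, producing a uniform loss of mass at every generation. Carefully tracking the constants through this probabilistic-flavored argument, and verifying that $\theta$ and $m_0$ do not depend on the particular cube $Q$, is the principal technical difficulty.
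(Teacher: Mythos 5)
The paper does not prove Lemma \ref{lm:dgrid}; it is stated as a known result with citations to \cite{DSSI}, \cite{DSUR}, \cite{Ch}, so there is no internal proof to compare against. Your sketch follows the same overall strategy as those references --- maximal separated nets at every scale, a parent/child relation built from nearest coarser centers, a tree-based definition of cubes, and an iterative loss-of-mass argument for the thin-boundary estimate (vi) --- so at that level you have the intended construction.

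There are, however, several places where the sketch is too quick or would not work as literally stated. First, defining cubes ``recursively from below, initializing at a base scale'' only yields cubes at and above the chosen base generation, but the lemma requires cubes at arbitrarily fine scales; in Christ's construction $Q_j^k$ is built directly from the full (infinite) descendant tree, and the fact that the resulting sets are well defined and Borel is itself part of the work, not something obtained by truncating at a finest generation and taking a union upward. Second, for property (v) the observation that every point of $\Delta(x_j^k, a_0 2^{-k})$ is closest to $x_j^k$ among same-generation centers does not by itself give membership in $Q_j^k$, since $Q_j^k$ is not the Voronoi cell of the scale-$k$ net; one must follow the parent chain of each nearby finer-scale center and show, via a geometric-series estimate with $a_0$ taken small, that the chain terminates at $x_j^k$. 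Third, and most seriously, your decay step for (vi) --- covering $N_\tau(Q)$ by descendants at a \emph{fixed} generation $k+m_0$ and claiming a definite fraction of them have deep interior disjoint from $N_{2^{-m_0}\tau}(Q)$ --- is mismatched in scale when $\tau\ll 2^{-m_0}$: at that point the generation-$(k+m_0)$ descendants are much larger than the strip $N_\tau(Q)$, their deep interior balls need not even lie inside $N_\tau(Q)$, and the uniform mass loss from $N_\tau(Q)$ to $N_{2^{-m_0}\tau}(Q)$ does not follow. The standard arguments instead work at a generation adapted to $\tau$ (roughly $\ell\approx k+\log_2(1/\tau)$), classify scale-$\ell$ descendants of $Q$ as interior or near-boundary, and prove that the near-boundary mass decays geometrically in $\ell-k$; summing this geometric decay is what produces the power $\tau^\eta$. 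Your iteration is the right kind of idea, but as written the inductive inequality is not the one the construction actually yields.
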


\begin{myproof}[Proof of Theorem \ref{thm:qc}] 
Let $k_0 \in \mathbb{Z}$ be the smallest integer such that $C_1 2^{-k_0} \leq \diam\pO$. We consider a dyadic grid $\mathbb{D} = \{Q\in \mathbb{D}_k: k \geq k_0 \}$ of the Ahlfors regular set $\pO$. Since $\pO$ is bounded, by property $(v)$ of Lemma \ref{lm:dgrid} there are finitely many cubes in the collection $\mathbb{D}_{k_0}$. For each $Q\in \mathbb{D}_{k_0}$ we have $\sigma(Q)\sim \left(2^{-k_0} \right)^{n-1} >0$. Since  $\sigma\ll\omega$ this implies $\omega(Q)>0$. 
Now let $N_0\in\mathbb{N}$ be the smallest integer such that
\[
	\frac{1}{N_0} \leq \min_{Q\in\mathbb{D}_{k_0}} \frac{\omega(Q)}{\sigma(Q)} \leq \max _{Q\in\mathbb{D}_{k_0}} \frac{\omega(Q)}{\sigma(Q)} \leq N_0.
\]
We apply a stopping time argument to the descendants of each cube $Q\in\mathbb{D}_{k_0}$.
Let $N\geq N_0 $ be an integer and let $\mathcal{F}_N = \{B_l\} \subset \mathbb{D} $ be the collection of maximal ``bad'' dyadic cubes with respect to the ``stopping criterion'' that
\[
	\text{either }\qquad\frac{\omega(B_l)}{\sigma(B_l)}< \frac{1}{N}\qquad \text{ or } \qquad\frac{\omega(B_l)}{\sigma(B_l)}> N.
\]
In particular $Q$ is not (a descendent of) a cube in $\mathcal{F}_N$ if it satisfies
\[
\frac{1}{N}\le \frac{\omega(Q)}{\sigma(Q)}\le N.
\]
Let 
\begin{equation}\label{eqn6.6}
 \Lambda_N = \pO \setminus \bigcup_{B_l\in \mathcal{F}_N} B_l. 
 \end{equation}
 Note that $\Lambda_N\subset\Lambda_{N+1}$ and
\begin{equation}\label{eq:pOdecomp}
	\pO = \left( \bigcap_{N\geq N_0} \bigcup_{B_l\in \mathcal{F}_N} B_l\right) \bigcup \left( \bigcup_{N\geq N_0} \Lambda_N \right) 
	=: R_0\bigcup \left(\bigcup_{N\geq N_0} \Lambda_N\right).
\end{equation}
We claim that $\sigma(R_0) = 0$.
In fact by the definition of $R_0$, each $q\in R_0$ is contained in some bad cube $B^{(N)} \in \mathcal{F}_N$, satisfying
for every $N\geq N_0$
\[ \text{either } \qquad\frac{\sigma(B^{(N)})}{\omega(B^{(N)})}  > N \qquad \text{ or } \qquad\frac{\sigma(B^{(N)})}{\omega(B^{(N)})} < \frac{1}{N}. \]
Hence every $q\in R_0$ falls into one of two cases: 
\begin{itemize}
\item there is a sequence $N_i \to \infty$ such that $\sigma(B^{(N_i)})/\omega(B^{(N_i)}) > N_i$ for all $i$, in which case we say $q\in R_0^{b}$ 
\item there is a sequence $N'_i\to\infty$ such that $\sigma(B^{(N'_i)})/\omega(B^{(N'_i)}) < 1/N_i$ for all $i$, in which case we say $q\in R_0^s$. 
\end{itemize}
Note that both $R_0^b$ and $R_0^s$ are Borel sets.
Since $\sigma \ll \omega$, the Radon-Nikodym derivative $h = d\sigma/d\omega$ is in $L^1(\omega)$ and is finite $\omega$-almost everywhere. 
Therefore by the Lebesgue differentiation theorem,
\begin{equation}\label{eq:caseb}
	h(q) = \infty \text{ for } \omega \text{-a.e. } q\in R_0^b, 
\end{equation} 
and
\begin{equation}\label{eq:cases}
	h(q) = 0 \text{ for } \omega \text{-a.e. } q\in R_0^s.
\end{equation}  
Since $h$ is finite $\omega$-almost everywhere, \eqref{eq:caseb} implies that $\omega(R_0^b) = 0$, and thus $\sigma(R_0^b) = 0$. On the other hand by \eqref{eq:cases} we have $\sigma(R_0^s) = \int_{R_0^s} h d\omega = 0$ since $\omega$ is a finite measure.
We conclude that $\sigma(R_0) = \sigma(R_0^b \cup R_0^s) = 0$.
Hence to show $\pO$ is rectifiable, it suffices to show $\Lambda_N $ is rectifiable for all $N\ge N_0$ (see \eqref{eq:pOdecomp}).

Recalling the definition of $\Lambda_N$ (see \eqref{eqn6.6}) we define a collection of cubes
\begin{equation}\label{eqn6.7}
 \mathcal{D}_N = \{Q\in\mathbb{D}: Q\subset \Lambda_N \} = \Big\{Q\in\mathbb{D}: Q \bigcap \bigcup_{B_l\in \mathcal{F}_N} B_l = \emptyset\Big\}. 
 \end{equation}
Note that 
\begin{itemize}
	\item $\mathcal{D}_N$ is a collection of ``good cubes''  for $N$, that is, 
		\begin{equation}\label{eq:asst}
			\frac{1}{N} \leq \frac{\omega(Q)}{\sigma(Q)} \leq N, \quad \text{for all } Q\in \mathcal{D}_{N}.
		\end{equation}
	\item If $Q\in \mathcal{D}_N$ is a ``good cube'', all of its descendants are ``good cubes'' in $\mathcal{D}_N$.
	\item The set $\Lambda_N = \cup_{Q\in\mathcal{D}_N} Q$ can be decomposed into a countable union of disjoint cubes in $\mathcal{D}_N$ with diameter less then $\delta(X_0)/4$.
\end{itemize}

Let $Q_0 \in \mathcal{D}_N$ be such that $4 \diam Q_0 \leq \delta(X_0) $.
For any descendant $Q$ of $Q_0$ (thus $Q \in \mathcal{D}_N$), by \eqref{eq:asst} we have
\begin{equation}\label{eq:Ainftycube}
\frac{1}{N^2} \frac{\omega(Q_0)}{\sigma(Q_0)} \leq \frac{\omega(Q)}{\sigma(Q)} \leq N^2 \frac{\omega(Q_0)}{\sigma(Q_0)}\qquad\hbox{     and     }\qquad	\frac{1}{N^2} \frac{\sigma(Q)}{\sigma(Q_0)} \leq \frac{\omega(Q)}{\omega(Q_0)} \leq N^2 \frac{\sigma(Q)}{\sigma(Q_0)}.
\end{equation}
To show that \eqref{eq:AinftyD} holds, the next step is to prove that \eqref{eq:Ainftycube} holds if we replace the dyadic cube $Q$ by any Borel set $E\subset Q_0$. The argument is similar to the one used in the proof of the change of pole formula \eqref{eq:copBorel}, except that now we need to work with dyadic ``cubes'' instead of surface balls.
Suppose $V \subset Q_0$ is (relatively) open. For any $x\in V$ let $Q_x$ be a dyadic cube containing $x$ such that 
\[ Q_x \subset \Delta(c_x, C_1 r_x) \subset V . \]
Here $C_1$, $c_x\in Q_x$ and $r_x = 2^{-k_x}$ are such that properties $(iv)$ and $(v)$ of Lemma \ref{lm:dgrid} hold. In particular 
$\diam Q_x \leq C_1 r_x$, and  $Q_x$ contains some surface ball $\Delta(c_x, a_0 r_x)$.
Then $V \subset \cup_{x\in V} \Delta(c_x,C_1 r_x)$. By Vitali covering lemma there is a countable collection of pairwise disjoint balls $\{\Delta(c_{x_j}, C_1 r_{x_j}) \}_{j\in J}$ such that
\begin{equation}\label{eq:coverbycube}
	V \subset \bigcup_{j\in J}  \Delta\left(c_{x_j},5C_1 r_{x_j}\right).
\end{equation}
By \eqref{eq:Ainftycube}, \eqref{eq:coverbycube}, the doubling property of $\omega$ and the fact that $\Delta(c_x,a_0 r_x) \subset Q_x$, we have
\begin{align}\label{eq:Ainftyopen}
	\frac{\omega(V)}{\omega(Q_0)}& \leq  \sum_{j\in J} \frac{\omega\left(\Delta(c_{x_j},5C_1 r_{x_j})\right)}{\omega(Q_0)} \le C \sum_{j\in J} \frac{\omega\left(\Delta(c_{x_j},a_0 r_{x_j})\right)}{\omega(Q_0)}   \\
	& \leq C\sum_{j\in J} \frac{\omega \left(Q_{x_j}\right)}{\omega(Q_0)} 
	 \leq C N^2 \sum_{j\in J} \frac{\sigma(Q_{x_j})}{\sigma(Q_0)} \nonumber \\
	& \leq C N^2 \sum_{j\in J} \frac{\sigma(\Delta(c_{x_j},C_1 r_{x_j}))}{\sigma(Q_0)} 
	 =C N^2 \frac{\sigma\left( \bigcup_{j\in J} \Delta(c_{x_j},C_1 r_{x_j}) \right)}{\sigma(Q)} \nonumber \\
	& \leq C N^2 \frac{\sigma(V)}{\sigma(Q_0)}.\nonumber
\end{align}
Since $\sigma$ is a Radon measure, \eqref{eq:Ainftyopen} holds if we replace open set $V $ by any Borel set $E\subset Q_0$ (see proof of \eqref{eqn6.4}). That is 
\begin{equation}\label{eq:AinftyBorel}
	\frac{\omega(E)}{\omega(Q_0)} \le C N^2 \frac{\sigma(E)}{\sigma(Q_0)},
\end{equation}
where $C$ only depends on $C_1$, $a_0$ and the doubling constant of $\omega$, and which in turn only depend on the depend on $n$, the Ahlfors regular constant of $\sigma$ and the uniform character of $\Omega$.
Since $\sigma$ is Ahlfors regular, it is also a doubling Radon measure. Noting that \eqref{eq:asst} and \eqref{eq:Ainftycube} are symmetric in $\sigma$ and $\omega$. By reversing their roles in \eqref{eq:Ainftyopen} and 
\eqref{eq:AinftyBorel} we obtain that for any Borel set $E\subset Q_0$
\begin{equation}\label{eq:AinftyBorel-2sides}
	C^{-1}\frac{1}{N^2}\frac{\sigma(E)}{\sigma(Q_0)}\le \frac{\omega(E)}{\omega(Q_0)} \le C N^2 \frac{\sigma(E)}{\sigma(Q_0)}.
\end{equation}
Given a surface ball $\Delta' \subset Q_0$ and a Borel set $E\subset \Delta'$,
combining \eqref{eq:AinftyBorel} with the left hand side of \eqref{eq:AinftyBorel-2sides} applied to $\Delta'$ we obtain
\begin{equation}\label{eq:localAinfty}
	\frac{\omega(E)}{\omega(\Delta')} \le C N^4 \frac{\sigma(E)}{\sigma(\Delta')}.
\end{equation}

For  $\epsilon$ small enough, define 
\[
	Q_0^*(\epsilon) = \{q\in Q_0: \dist(q,Q_0^c) > \tau r_{Q_0}\}.
\]
Note that $Q_0^*(\epsilon)$ is open. Here $r_{Q_0} = 2^{-k}$ for some $k\in\mathbb{Z}$, $\tau=\tau(\epsilon)$ is in $(0,a_0)$ such that $C_1 \tau^\eta \leq \epsilon$, and both parameters are to guarantee that properties $(iv)$ and $(v)$ in Lemma \ref{lm:dgrid} hold.
Thus we have
\[
	\sigma(Q_0^*(\epsilon)) \geq \sigma(Q_0) - C_1 \tau^{\eta}\sigma(Q_0) \geq (1-\epsilon) \sigma(Q_0).
\]
Therefore for any sequence $\epsilon_i \to 0$ we have $\sigma(Q_0 \setminus \cup_i Q_0^*(\epsilon_i)) = 0$. Thus in particular
\begin{equation}\label{eqn6.8}
Q_0= \mathcal{E}_0\cup \bigcup_i Q_0^*(\epsilon_i)\qquad\hbox{ with }\qquad \sigma(\mathcal {E}_0)=0.
\end{equation}
Thus to show $Q_0$ is rectifiable, it suffices to show $Q_0^*(\epsilon)$ is rectifiable for $\epsilon$ small enough. 
We finish the proof by applying Theorem \ref{thm:lfplocal} to the open set $Q_0^*(\epsilon)$. Suppose $\Delta'\subset \Delta$ are surface balls in $Q_0^*(\epsilon)$, and that $E\subset \Delta'\subset \Delta$ is a Borel set. Recall that  $4 \diam Q_0 \leq \delta(X_0) $ so by the change of pole formula \eqref{eq:cop} and \eqref{eq:copBorel} we have
\begin{equation}\label{eq:copapply}
	\omega^{A_{\Delta}}(\Delta') \sim \frac{\omega(\Delta')}{\omega(\Delta)},\quad \omega^{A_{\Delta}}(E) \sim \frac{\omega(E)}{\omega(\Delta)}.
\end{equation}
Combining \eqref{eq:copapply} and \eqref{eq:localAinfty} we get
\begin{align*}\label{eqn6.9}
	 \frac{\omega^{A_\Delta}(E)}{\omega^{A_{\Delta}}(\Delta')} \sim \dfrac{\frac{\omega(E)}{\omega(\Delta)}}{\frac{\omega(\Delta')}{\omega(\Delta)}} = \frac{\omega(E)}{\omega(\Delta')} C \le N^4 \frac{\sigma(E)}{\sigma(\Delta')}.
\end{align*}
That is to say $Q_0^*(\epsilon)$ satisfies the assumption \eqref{eq:AinftyD} of Theorem \ref{thm:lfplocal} with uniform constants $C_0= C N^4$ and $\theta = 1$. Therefore we conclude that $Q_0^*(\epsilon)$ is $(n-1)$-rectifiable, and using \eqref{eqn6.8} we also have that $Q_0$ is $(n-1)$-rectifiable.
By \eqref{eq:pOdecomp} $\displaystyle \pO=R_0\cup \cup_{N\ge N_0}\Lambda_N$ with $\sigma(R_0)=0$. Since each $\Lambda_N$ can be written as a countable disjoint union of cubes in $\mathcal D_N$ with diameter less than $\delta(X_0)/4$ (see \eqref{eqn6.7}) and the properties stated thereafter) such as $Q_0$, we deduce that each $\Lambda_N$ is $(n-1)$-rectifiable and so is  $\pO$.\qed


\end{myproof}

\end{document}